\documentclass[12pt, reqno,draft]{amsart}
\usepackage{amsmath}
\usepackage{amsthm}
\usepackage{amssymb}
\usepackage[abbrev]{amsrefs}
\usepackage{mathrsfs}
\usepackage[usenames]{color}
\usepackage{bm}
\usepackage{bbm}
\usepackage{enumitem}
\newtheorem{thm}{}[section]
\newtheorem{theorem}[thm]{Theorem}

\newtheorem{lemma}[thm]{Lemma}
\newtheorem{proposition}[thm]{Proposition}
\theoremstyle{definition}
\newtheorem{definition}[thm]{Definition}
\theoremstyle{remark}
\newtheorem{remark}[thm]{Remark}

\numberwithin{equation}{section}
\allowdisplaybreaks
\newcommand{\abs}[1]{\left\lvert#1\right\rvert}
\newcommand{\norm}[1]{\left\lVert#1\right\rVert}
\newcommand{\floor}[1]{\left\lfloor #1 \right\rfloor}
\newcommand{\ceil}[1]{\left\lceil #1 \right\rceil}

\newcommand{\SL}{\ensuremath{\mathcal{L}}}
\newcommand{\bes}{\ensuremath{\bm{\beta}}}
\newcommand{\hil}{\ensuremath{\bm{\eta}}}
\newcommand{\EE}{\ensuremath{\mathbb{E}}}
\newcommand{\Ind}{\ensuremath{\mathbbm{1}}}
\newcommand{\pot}{\ensuremath{\bm{\psi}}}
\newcommand{\sldf}{\ensuremath{\bm{\varphi_l^s}}}
\newcommand{\sudf}{\ensuremath{\bm{\varphi_u^s}}}

\newcommand{\udf}{\ensuremath{\bm{\varphi_u}}}
\newcommand{\kl}{\ensuremath{\widetilde{\bm{k}}}}
\newcommand{\kk}{\ensuremath{\bm{k}}}
\newcommand{\Fou}{\ensuremath{\mathcal{F}}}

\newcommand{\EB}{\ensuremath{\mathcal{E}}}
\newcommand{\dd}{\ensuremath{\mathbf{d}}}
\newcommand{\DD}{\ensuremath{\mathcal{D}}}
\newcommand{\ww}{\ensuremath{\bm{w}}}
\newcommand{\uu}{\ensuremath{\bm{u}}}

\newcommand{\FF}{\ensuremath{\mathbb{F}}}
\newcommand{\Sym}{\ensuremath{\mathbb{S}}}
\newcommand{\LL}{\ensuremath{\mathcal{L}}}
\newcommand{\Id}{\operatorname{Id}}
\newcommand{\NN}{\ensuremath{\mathbb{N}}}

\newcommand{\BX}{\ensuremath{\mathbb{B}}}
\newcommand{\XX}{\ensuremath{\mathbb{X}}}
\newcommand{\YY}{\ensuremath{\mathbb{Y}}}
\newcommand{\ZZ}{\ensuremath{\mathbb{Z}}}
\newcommand{\Disc}{\ensuremath{\mathbb{D}}}
\newcommand{\yy}{\ensuremath{\bm{y}}}
\newcommand{\bb}{\ensuremath{\bm{b}}}
\newcommand{\YB}{\ensuremath{\mathcal{Y}}}
\newcommand{\ZB}{\ensuremath{\mathcal{Z}}}
\newcommand{\BB}{\ensuremath{\mathcal{B}}}
\newcommand{\Fu}{\ensuremath{\mathcal{F}}}
\newcommand{\Cu}{\ensuremath{\mathcal{C}}}
\newcommand{\ee}{\ensuremath{\bm{e}}}
\newcommand{\WW}{\ensuremath{\mathbb{W}}}
\newcommand{\vv}{\ensuremath{\bm{v}}}
\newcommand{\VV}{\ensuremath{\mathbb{V}}}

\newcommand{\XB}{\ensuremath{\mathcal{X}}}
\newcommand{\VB}{\ensuremath{\mathcal{V}}}
\newcommand{\xx}{\ensuremath{\bm{x}}}
\newcommand{\leb}{\ensuremath{\bm{L}}}
\newcommand{\zz}{\ensuremath{\bm{z}}}

\newcommand{\supp}{\operatorname{supp}}
\newcommand{\Av}{\operatorname{Av}}
\AtBeginDocument{\def\MR#1{}}

\begin{document}
\title[Existence of almost greedy bases in Besov spaces]{Existence of almost greedy bases in mixed-norm sequence and matrix spaces, including Besov spaces}
\author[F. Albiac]{Fernando Albiac}\address{Department of Mathematics, Statistics, and Computer Sciences and INAMAT$^2$\\ Universidad P\'ublica de Navarra\\
Pamplona 31006\\ Spain}
\email{fernando.albiac@unavarra.es}

\author[J. L. Ansorena]{Jos\'e L. Ansorena}\address{Department of Mathematics and Computer Sciences\\
Universidad de La Rioja\\
Logro\~no 26004\\ Spain}
\email{joseluis.ansorena@unirioja.es}

\author[G. Bello]{Glenier Bello}
\address{Departamento de Matem\'{a}ticas\\
Universidad Aut\'{o}noma de Madrid\\
28049 Madrid;
ICMAT CSIC-UAM-UC3M-UCM\\
28049 Madrid,
Spain}
\email{glenier.bello@uam.es}

\author[P. Wojtaszczyk]{Przemys{\l}aw Wojtaszczyk}
\address{Institute of Mathematics of the Polish Academy of Sciences\\
00-656 Warszawa\\
ul.\ \'Sniadeckich 8\\
Poland}
\email{wojtaszczyk@impan.pl}
\subjclass[2010]{46B15, 41A65}
\keywords{Thresholding Greedy algorithm, conditional basis, quasi-greedy basis, almost greedy basis,
subsymmetric basis, $\ell_p$-spaces}
\begin{abstract}
We prove that the sequence spaces $\ell_p\oplus\ell_q$ and the spaces of infinite matrices $\ell_p(\ell_q)$, $\ell_q(\ell_p)$ and $(\bigoplus_{n=1}^\infty \ell_p^n)_{\ell_q}$, which are isomorphic to certain Besov spaces, have an almost greedy basis whenever $0<p<1<q<\infty$. More precisely, we custom-build almost greedy bases in such a way that the Lebesgue parameters grow in a prescribed manner. Our arguments critically depend on the extension of the Dilworth-Kalton-Kutzarova method from \cite{DKK2003}, which was originally designed for constructing almost greedy bases in Banach spaces, to make it valid for direct sums of mixed-normed spaces with nonlocally convex components. Additionally, we prove that the fundamental functions of all almost greedy bases of these spaces grow as $(m^{1/q})_{m=1}^\infty$.
\end{abstract}
\maketitle
\section{Introduction}\label{Introduction}\noindent
The concepts of greedy basis, almost greedy basis and quasi-greedy basis sprang from the study of the efficiency of the thresholding greedy algorithm (TGA for short) relative to bases in Banach spaces. Both greedy and quasi-greedy bases were introduced in the pioneering work of Konyagin and Temlyakov \cite{KoTe1999} from 1999, whereas almost greedy bases were defined shortly afterwards by Dilworth et al.\ \cite{DKKT2003} in what with hindsight would be, together with the work of Wojtaszczyk \cite{Woj2000}, the forerunner article on the functional analytic approach to the theory. If Konyagin and Temlyakov had characterized greedy bases as unconditional bases with the additional property of being democratic, Dilworth et al.\ characterized almost greedy bases as those bases that are simultaneously quasi-greedy and democratic.

An important research topic in functional analysis and abstract approximation theory is to determine if a given space has one of the three above-mentioned types of greedy-like bases.
It is clear that a Banach space cannot have a greedy basis unless it has an unconditional basis and this rules out many natural examples such as $L_{1}[0,1]$ and $\Cu[0,1]$. One can also give other examples failing to have a greedy basis such as $\ell_{1}\oplus \ell_{2}$; here it is a classical result of Edelstein and Wojtaszczyk \cite{EdWo1976} stating that any normalized unconditional basis is equivalent to the canonical basis, which is plainly non democratic.

However, it is much easier for a Banach space $\XX$ to have an almost greedy basis and a general construction was provided by Dilworth, Kalton, and Kutzarova in \cite{DKK2003}. Their technique, called for short the DKK-method, starts from the assumption that $\XX$ has a basis and then suppose that it has a complemented subspace $\Sym$ with a symmetric basis, so that $\XX$ is isomorphic to $\XX\oplus \Sym$. Then it is possible to construct a new basis of the direct sum which behaves very like the symmetric basis of $\Sym$, in the sense that inherits many of the good properties of the symmetric basis. More specifically, the following theorem was proved in \cite{DKK2003}.

\begin{theorem}\label{dkkmethod}
Let $\XX$ be a Banach space with a (Schauder) basis and suppose that $\XX$ has a complemented subspace with a symmetric basis $\Sym$. Then:
\begin{enumerate}[label=(\alph*)]
\item If $\Sym$ is not isomorphic to $c_{0}$ then $\XX$ has a quasi-greedy basis.
\item If $\Sym$ has nontrivial cotype then $\XX$ has an almost greedy basis.
\end{enumerate}
\end{theorem}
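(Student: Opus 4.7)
The plan is to build a new basis of $\XX$ in which the symmetric structure of $\Sym$ is blended into $\XX$ block by block, via what has come to be called the DKK construction. First I would reduce to working in $\XX\oplus\Sym$: since $\Sym$ carries a symmetric basis, $\Sym\cong\Sym\oplus\Sym$, and writing $\XX=\Sym\oplus Y$ by the hypothesis that $\Sym$ is complemented in $\XX$ gives $\XX\oplus\Sym\cong\Sym\oplus Y\oplus\Sym\cong\Sym\oplus Y\cong\XX$. Fix a Schauder basis $(x_n)_{n=1}^\infty$ of $\XX$, the symmetric basis $(s_n)_{n=1}^\infty$ of $\Sym$, and a partition $\NN=\bigsqcup_{k=1}^\infty A_k$ into consecutive finite intervals with $|A_k|=m_k\to\infty$, the growth rate to be calibrated to the hypothesis on $\Sym$.

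For each $k$, let $\nu_k=\norm{\sum_{j\in A_k} s_j}$ and set $\sigma_k=\nu_k^{-1}\sum_{j\in A_k} s_j$, fix a distinguished index $j_k\in A_k$, and pick scalars $\lambda_k\downarrow 0$. In $\XX\oplus\Sym$ I would replace the canonical pair $\{(x_k,0),(0,s_{j_k})\}$ by the mixed vector $f_k=(x_k,\lambda_k\sigma_k)$, and substitute each $(0,s_j)$ with $g_{k,j}=(0,s_j-s_{j_k})$ for $j\in A_k\setminus\{j_k\}$. Enumerated block by block, the resulting system spans, on every initial segment, the same subspace as the original canonical basis of $\XX\oplus\Sym$, and the corresponding change-of-basis operator differs from the identity by a perturbation whose norm can be made strictly less than $1$ provided $\lambda_k$ decays fast enough; hence it is a Schauder basis. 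Its fundamental function is equivalent to $\varphi_\Sym(m)=\norm{\sum_{n=1}^m s_n}$, with the upper estimate coming from the symmetric property of $(s_n)$ on the $\Sym$-side and the lower estimate from the basis constant of $(x_n)$ on the $\XX$-side, so the new basis is democratic.

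The heart of the argument is the quasi-greedy inequality. Given $z\in\XX\oplus\Sym$ and a greedy set $F$ of its largest coordinates in the new basis, the projection of $z$ onto the span of the corresponding basis vectors splits into an $\XX$-component controlled up to the basis constant of $(x_n)$ and a $\Sym$-component whose unconditionality (together with the symmetric property) makes it quasi-greedy. The error comes from reconstructing the $s_{j_k}$'s out of the averages $\sigma_k$, and can be bounded provided a certain tail sum over disjoint blocks is summable in $\Sym$; this is where the hypothesis enters. For (a), the assumption $\Sym\not\cong c_0$ forces $\nu_k\to\infty$ (indeed at any rate prescribed in advance by enlarging $m_k$), making the tails summable and hence the system quasi-greedy. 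For (b), the nontrivial cotype $q$ of $\Sym$ additionally permits an $\ell_q$-aggregation of the block tails that yields super-democracy, which combined with quasi-greediness amounts exactly to almost greediness. The main obstacle I foresee is the quantitative coupling of $m_k$, $\lambda_k$ and $\nu_k$: one must pick these sequences so that the tail estimates hold uniformly over all possible greedy projections simultaneously, and verify along the way that democracy is not destroyed by the mixing.
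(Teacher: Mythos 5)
The theorem you are trying to prove is not actually proved in this paper; it is quoted verbatim from Dilworth, Kalton, and Kutzarova, and what the paper does provide is a generalization of their machinery in Section~2 (the norm $\norm{f}_{\XB,\Sym,\sigma}=\norm{Q_\sigma(f)}_\Sym+\norm{H_{\XB,\XX,\sigma}(f)}_\XX$, the space $\YY[\XB,\Sym,\sigma]$, and its unit vector system). Your high-level plan --- reduce to $\XX\oplus\Sym$, chop $\NN$ into blocks whose size grows, and blend the $\Sym$-structure into $\XX$ block by block --- is the correct one, and the isomorphism $\XX\oplus\Sym\simeq\XX$ is the right first reduction.

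The concrete construction, however, has a structural flaw that breaks democracy. In the DKK scheme the $\XX$-contribution is \emph{spread uniformly over every vector of a block}: after transporting the unit vector system of $\YY[\XB,\Sym,\sigma]$ onto $\XX\oplus\Sym$, the basis vector indexed by $j\in\sigma_n$ corresponds to $\bigl(Q_\sigma(\ee_j),\ \tfrac{\Lambda_{N_n}[\Sym]}{N_n}\,\xx_n\bigr)$, i.e.\ a $\Sym$-part close to $\ee_j$ minus its block average, plus a small $\XX$-part proportional to $\xx_n$ that is the \emph{same} for all $j$ in the block. This even spreading is what makes $\norm{H(\Ind_A)}_\XX$ controllable by $\Lambda[\Sym]$ and hence gives democracy. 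In your construction the $\XX$-part is concentrated on a single vector $f_k=(\xx_k,\lambda_k\sigma_k)$ per block while the remaining $g_{k,j}=(0,s_j-s_{j_k})$ carry no $\XX$-mass. If you then pick $A=\{f_1,\dots,f_m\}$, the $\XX$-component of $\sum_{k\in A}f_k$ is $\sum_{k=1}^m\xx_k$, whose norm is governed by the arbitrary, possibly highly conditional, basis $\XB$ of $\XX$ and need not be comparable to $\Lambda_m[\Sym]$; so $\sudf$ and $\sldf$ cannot match. The second gap is your perturbation argument: replacing $(0,s_j)$ by $(0,s_j-s_{j_k})$ is a perturbation of fixed size $\norm{s_{j_k}}$, so the change-of-basis operator is \emph{not} a small perturbation of the identity, and in fact must not be --- by the small perturbation principle, a near-identity change of basis produces a basis equivalent to the original, hence with the same (non-democratic) greedy behaviour. (There is also a bookkeeping error: you replace the two vectors $(\xx_k,0)$ and $(0,s_{j_k})$ by the single vector $f_k$, which drops one dimension per block.) What your sketch is missing is precisely the averaging projection $P_\sigma$ and its complement $Q_\sigma$ on the subsymmetric space $\Sym$; their boundedness is the technical core of the DKK construction and is what lets you ``reconstruct the $s_j$'s from the block averages'' in a way compatible with both democracy and the quasi-greedy estimate.
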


For instance, Theorem~\ref{dkkmethod} permits to readily deduce that $L_{1}[0,1]$ does have an almost greedy basis (see \cite{Gogyan2010} for a constructive example).

Once we know that a Banach space has an almost greedy basis, there are two possible directions to make headway in the theory. The first one is to estimate the size of the Lebesgue parameters $(\leb_{m})_{m=1}^{\infty}$ (see the definition in Section~\ref{sec:Lebesgue}) as a way to investigate the efficiency of the TGA relative to the basis, or, put in other words, how far the basis is from being greedy. The article \cite{AADK2019b} gathers some of the most significative advances in this direction. However, since some techniques from that paper rely heavily on the local convexity of the space, the problem of existence of almost greedy bases has remained opened in many important classical spaces, like the ones described in the title. Our contribution in this note closes this gap in the theory. It also connects with the other possible course of action by providing estimates for the Lebesgue parameters of the almost greedy bases whose existence we prove, and determine their prospective fundamental functions.

To be precise, our study includes, among other spaces, the finite direct sums
\begin{equation}\label{dpq}
D_{p,q}:=\ell_p\oplus\ell_q, \end{equation}
the matrix spaces
\begin{equation}\label{zpq}
Z_{p,q}:=\ell_q(\ell_p), \end{equation}
and the mixed-norm spaces of the family
\begin{equation}\label{bpq}
B_{p,q}:=\left(\bigoplus_{n=1}^\infty \ell_p^{n}\right)_q,
\end{equation}
for $0<p,q\le \infty$, with the convention that $\ell_{\infty}$ means $c_{0}$. Note that we use $(\oplus_{n=1}^\infty \XX_n)_q$ to denote the direct sum of the quasi-Banach spaces $\XX_n$ in the $\ell_q$-sense (in the $c_0$-sense if $q=\infty$).

The conductive thread of Section~\ref{DKKMethod} is the search for results that will allow us to construct almost greedy bases in certain nonlocally convex spaces while in Section~\ref{sec:Lebesgue} we study their Lebesgue parameters. We also enhance some results from \cite{AADK2019b} and tailor almost greedy bases whose Lebesgue parameters have a prescribed growth order (versus getting Lebesgue parameters whose growth is controlled only from below).

In Section~\ref{Main} we apply our techniques to show that the spaces $Z_{p,q}$, $B_{p,q}$, and $D_{p,q}$ for $0<p<1<\infty$ possess almost greedy bases.
We recall that the matrix spaces $Z_{p,q}$ are isomorphic to Besov spaces over Euclidean spaces (see, e.g., \cite{AlbiacAnsorena2016b}) and that the mixed-norm spaces $B_{p,q}$ are isomorphic to Besov spaces over the unit interval (see, e.g., \cite{AlbiacAnsorena2017}*{Appendix 4.2}).

Apart from the trivial cases, namely
\[
D_{q,q}\simeq Z_{q,q}\simeq B_{q,q}\simeq \ell_q, \quad 0< q\le\infty,
\]
and the case
\begin{equation}\label{Peuchinski}
\ell_q \simeq B_{2,q}, \quad 1<q<\infty,
\end{equation}
all the above-mentioned spaces are mutually non-isomorphic (see \cite{AlbiacAnsorena2017}).
The isomorphism in \eqref{Peuchinski} was obtained by Pe{\l}czy{\'n}ski in \cite{Pel1960} by combining the uniform complementation of the embeddings of $\ell_2^n$ into $\ell_q^{2^n}$, $n\in\NN$, with Pe{\l}czy{\'n}ski's decomposition technique (see, e.g., \cite{AlbiacKalton2016}*{Theorem 2.2.3}). Another well-known consequence of Pe{\l}czy{\'n}ski's decomposition technique is that for any unbounded sequence of integers $(d_n)_{n=1}^\infty$ we have
\[
B_{p,q}\simeq(\oplus_{n=1}^\infty \ell_p^{d_n})_q, \quad 0<p,q\le \infty,
\]
(see, e.g., \cite{AlbiacAnsorena2017}*{Appendix 4.1}).

In Section~\ref{sec:FF}, we study the fundamental functions of all almost greedy bases of the spaces $Z_{p,q}$, $B_{p,q}$, and $D_{p,q}$. Here, it is worth point out that the results we obain shed light to the locally convex cases and yield in particular all the possible fundamental functions of the spaces $Z_{p,q}$ when $p,q\ge 1$.

\subsection{Terminology.} Throughout this paper we will use standard notation and terminology from Banach spaces and greedy approximation theory, as can be found, e.g., in \cite{AlbiacKalton2016}. We also refer the reader to the recent article \cite{AABW2021} for other more specialized notation. We next single out however the most heavily used terminology.

The underlying field of our spaces will be the real or complex field, which will be denoted by $\FF$. For the set of all scalars of modulus $1$ we will use $\EE$.

The modulus of concavity $\kappa=\kappa[\XX]$ of a quasi-Banach space $\XX$ is the smallest constant $C$ such that $\norm{f+g}\le C (\norm{f} +\norm{g})$ for all $f$ and $g\in\XX$. More generally, given $b\in\NN$, we denote by $\kappa_b=\kappa_b[\XX]$ the optimal constant $C$ such that
\[
\norm{\sum_{j=1}^b f_j} \le C \sum_{j=1}^b \norm{f_j}, \quad f_j\in\XX.
\]
In this notation, $\kappa=\kappa_2$.

Let $\EB=(\ee_n)_{n=1}^\infty$ be the unit vector system of $\FF^\NN$, and let $\EB^*=(\ee_n^*)_{n=1}^\infty$ be its coordinate functionals. A \emph{sequence space} is a quasi-Banach space $\Sym\subseteq\FF^\NN$ for which $\EB$ is a Schauder basis. Given a sequence space $\Sym$ and $N\in\NN$, we denote by $\Sym^{(N)}$ the $N$-dimensional space spanned by $(\ee_n)_{n=1}^N$ regarded as a subspace of $\Sym$. If the truncation applies to a sequence $\XB=(\xx_n)_{n=1}^\infty$ in a quasi-Banach space $\XX$, we set $\XB^{(N)}=(\xx_n)_{n=1}^N$.
Given a collection of signs $\varepsilon=(\varepsilon_n)_{n\in A}\in\EE^A$, we set
\[
\Ind_{\varepsilon,A}[\XB,\XX]=\sum_{n\in A} \varepsilon_n\, \xx_n.
\]
If $\varepsilon_n=1$ for all $n\in A$, we set $\Ind_{A}[\XB,\XX]=\Ind_{\varepsilon,A}[\XB,\XX]$.

The direct sum of $N$ Schauder bases $\XB_k=(\xx_{k,n})_{n=1}^\infty$ of quasi-Banach spaces $\XX_k$, $1\le k\le N$, is the Schauder basis $\XB=\bigoplus_{k=1}^{N}\XB_k=(\xx_j)_{j=1}^\infty$ of $\XX=\bigoplus_{k=1}^{N}\XX_k$ constructed by alternating the elements of each basis; that is, if $L_k$ denotes the canonical embedding of $\XX_k$ into $\XX$ then, if $L_k\colon \XX_k \to \XX$ is the canonical embedding,
\[
\xx_{(n-1)N+k}=L_k(\xx_{k,n}), \quad n\in\NN, \; 1\le k\le N.
\]

For each $k\in\NN$ let $\XB_k=(\xx_{k,n})_{n=1}^{N_k}$ be a Schauder basis of a finite-dimensional quasi-Banach space $\XX_k$. Suppose the basis constants of the bases $\XB_k$ and the moduli of concavity of the quasi-Banach spaces $\XX_k$ are uniformly bounded, and
let $\LL$ be a quasi-Banach lattice over $\NN$. The infinite direct sum $(\bigoplus_{k=1}^\infty \XB_k)_\LL$ of the bases $\XB_k$ is the Schauder basis $(\xx_j)_{j=1}^\infty$ of $\XX=(\bigoplus_{k=1}^\infty \XX_k)_\LL$ constructed by concatenating the bases. That is, if $M_n=\sum_{k=1}^n N_k$ and $L_k$ is the canonical embedding of $\XX_k$ into $\XX$, then
\[
\xx_{M_{k-1}+n}=L_k(\xx_{k,n}), \quad k\in\NN, \; 1\le n\le N_k.
\]

On occasion will use upper and lower estimates in quasi-Banach lattices. Let us recall the definitions.
A quasi-Banach lattice $\XX$ is said to satisfy an \emph{upper $r$-estimate} (resp., lower $r$-estimate), $0<r<\infty$, if there is a constant $C$ such that, for every pairwise disjoint finite family $\Fu=(f_j)_{j\in J}$ in $\XX$, $S(\Fu)\le C T(\Fu)$ (resp., $T(\Fu)\le C S(\Fu)$), where
\[
S(\Fu)=\norm{\sum_{j\in J} f_j}, \quad T(\Fu)=\left(\sum_{j\in J} \norm{ f_j}^r\right)^{1/r}.
\]

A quasi-Banach lattice $\LL$ over $\NN$ is said to be \emph{minimal} if $c_{00}$ is dense in $\LL$.

Other more specific notation will be introduced in context, when needed.
\section{The Dilworth-Kalton-Kutzarova method for quasi-Banach spaces}\label{DKKMethod}\noindent
The DKK-method was invented in \cite{DKK2003} and subsequently widely studied in \cite{AADK2019b} with the aim to construct bases of Banach spaces with some special features which are of interest in abstract approximation theory. The prototypical ingredients one needs in order to be able to implement this method are:
\begin{itemize}[leftmargin=*]
\item An \emph{ordered partition} of $\NN$, i.e., a sequence $(\sigma_n)_{n=1}^\infty$ of disjoint subsets of $\NN$, with union equal to $\NN$, such that $\max(\sigma_{n})<\min(\sigma_{n+1})$ for all $n\in\NN$;
\item a semi-normalized Schauder basis $\XB=(\xx_n)_{n=1}^\infty$ of a Banach space $\XX$; and
\item a locally convex \emph{subsymmetric} sequence space $\Sym\subseteq\FF^\NN$, i.e., a Banach sequence space such that for all $f=\sum_{n=1}^{\infty}a_n\ee_n\in\Sym$, all $(\varepsilon_n)_{n=1}^\infty\in\EE^\NN$, and all $\phi\colon\NN\to\NN$ increasing we have
\[
\norm{\sum_{n=1}^{\infty}\varepsilon_na_n\ee_{\phi(n)}}=\norm{f}.
\]
\end{itemize}

Ordered partitions consist of integer intervals. To be precise, we have $\sigma_n=[1+M_{n-1},M_n]$ for all $n\in\NN$, where $M_0=0$ and $M_n=\sum_{j=1}^n\abs{\sigma_j}$ for all $n\in\NN$.

Given $f=(a_n)_{n=1}^\infty\in\FF^\NN$ and $A\subseteq\NN$ finite, we put
\[
\Av(f,A)= \frac{1}{|A|} \left(\sum_{j\in A} a_j\right).
\]
The \emph{averaging projection} with respect to an ordered partition $\sigma=(\sigma_n)_{n=1}^\infty$ is the map $P_\sigma\colon \FF^\NN\to\FF^\NN$ defined by
\[
(a_j)_{j=1}^\infty\mapsto (b_k)_{k=1}^\infty, \quad b_k =\Av(f,\sigma_n) \mbox{ if }k\in \sigma_n.
\]

The boundedness of the averaging projections (see \cite{LinTza1977}*{Propostion 3.a.4}) on $\Sym$ is a central component for the method to work, so we cannot do without the assumption that $\Sym$ is locally convex. In this section we will enhance the DKK-method by dropping the premise that $\XX$ is a Banach space so that it also permits to obtain \emph{special} bases of nonlocally convex spaces.

Let $\Lambda[\Sym]=(\Lambda_m[\Sym])_{m=1}^\infty$ be the fundamental function of the unit vector system of $\Sym$, i.e.,
\[
\Lambda_m[\Sym]=\norm{\Ind_{\varepsilon,A}}_\Sym, \quad \abs{A}=m, \, \varepsilon\in\EE^A.
\]
Let $\Lambda^*[\Sym]=(\Lambda^*_m[\Sym])_{m=1}^\infty$ be its dual sequence,
\[
\Lambda^*_m[\Sym]=\frac{m}{\Lambda_m[\Sym]}, \quad m\in\NN.
\]

We next consider sequences $\VB_{\Sym,\sigma}=(\vv_n)_{n=1}^\infty$ and $\VB^*_{\Sym,\sigma}=(\vv_n^*)_{n=1}^\infty$ given by
\begin{equation}\label{BOSforAP}
\vv_n=\frac{1}{\Lambda_{N_n}} \Ind_{\sigma_n}[\EB,\Sym] \quad\mbox{ and }\quad
\vv_n^*= \frac{1}{\Lambda^*_{N_n}} \Ind_{\sigma_n}[\EB^*,\Sym^*],
\end{equation}
where $N_n=\abs{\sigma_n}$. The sequence $\VB_{\Sym,\sigma}$ is a normalized basic sequence in $\Sym$, and $\VB^*_{\Sym,\sigma}$ is a semi-normalized basic sequence in $\Sym^*$ (see \cite{LinTza1977}*{Propostion 3.a.6}) biorthogonal to $\VB$. With this terminology, the averaging projection $P_\sigma$ can be expressed as
\[
P_\sigma(f) =\sum_{n=1}^\infty \vv_n^*(f)\vv_n, \quad f\in\FF^\NN.
\]
We denote by $Q_{\sigma}=\Id_{\FF^\NN}-P_{\sigma}$ the complementary projection of $P_\sigma$. Since $\norm{P_\sigma}_{\Sym\to\Sym}\le 2$ we infer that $\norm{Q_\sigma}_{\Sym\to\Sym}\le 3$.

Let us also introduce the map
\[
H_{\XB,\XX,\sigma}\colon c_{00} \to \XX, \quad f\mapsto \sum_{n=1}^\infty \vv_n^*(f)\, \xx_n.
\]

If $\sigma$ is an ordered partition of $\NN$, $\Sym$ is a locally convex subsymmetric sequence space, and $\XB$ is a semi-normalized Schauder basis of a quasi-Banach space, the formula
\[
\norm{f}_{\XB,\Sym,\sigma}
=\norm{ Q_\sigma (f)}_\Sym+\norm{H_{\XB,\XX,\sigma} (f)}_\XX,
\]
clearly defines a semi-quasi-norm on $c_{00}$, which ends up being a quasi-norm. This can be deduced from the equivalence
\begin{equation}\label{eq:A}
\norm{f}_{\XB,\XX,\sigma} \simeq \norm{f}_\Sym, \quad n\in\NN,\; \supp(f)\subseteq\sigma_n,
\end{equation}
whose proof goes over the lines of the proof of that equivalence in the locally convex setting (see \cite{AADK2019b}*{Lemma 3.3}). Estimate~\eqref{eq:A} also yields the existence of a constant $C$ such that
\begin{equation*}
\abs{\ee_j^*(f)} \le C\norm{ f }_{\XB,\Sym,\sigma}, \quad j\in\NN,\; f\in\FF^\NN.
\end{equation*}
Therefore, the completion of $c_{00}$ equipped with the quasi-norm $\norm{\cdot }_{\XB,\XX,\sigma}$ falls inside $c_0$.

\begin{definition}
Let $\sigma$ be an ordered partition of $\NN$, $\Sym$ be a locally convex subsymmetric sequence space, and let $\XB$ be a semi-normalized Schauder basis of a quasi-Banach space $\XX$. We define the sequence space $\YY[\XB,\Sym,\sigma]$ as the completion of $(c_{00},\norm{ \cdot}_{\XB,\Sym,\sigma})$.
\end{definition}

The next result gathers some properties of the space $\YY[\XB,\Sym,\sigma]$ and its unit vector system $\EB$.

\begin{theorem}\label{thm:FirstProperties}
Let $\XB=(\xx_j)_{j=1}^\infty$ be a Schauder basis for a quasi-Banach space $\XX$, let $\Sym$ be a locally convex subsymmetric sequence space, and let $\sigma=(\sigma_n)_{n=1}^\infty$ be an ordered partition of $\NN$. Denote $N_n=\abs{\sigma_n}$ for $n\in\NN$ and $M_r=\sum_{n=1}^r N_n$ for $r\in \NN$. We have:
\begin{enumerate}[label=(\roman*), leftmargin=*,widest=iii]
\item\label{FP:1} The unit-vector system is a semi-normalized Schauder basis for $\YY=\YY[\XB,\Sym,\sigma]$.
\item\label{FP:2} $\YY \simeq Q_\sigma(\Sym) \oplus \XX$.
\item\label{FP:3} $\YY^{(M_r)}\simeq Q_\sigma(\Sym^{(M_r)})\oplus [\xx_j \colon 1\le j \le r]$ uniformly on $r\in\NN$.
\item A basis $\XB'$ for a Banach space $\XX'$ dominates $\XB$ if and only if
$\YY[\XB',\Sym,\sigma] \subseteq \YY$ continuously.
\end{enumerate}
\end{theorem}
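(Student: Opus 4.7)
The plan is to prove the four assertions in order, with the isomorphism in (ii) serving as the structural backbone. For (i), the semi-normalization $\norm{\ee_j}_\YY\simeq 1$ is immediate from \eqref{eq:A}, since every $\ee_j$ has singleton support lying in one block. The delicate point is the uniform boundedness of the partial-sum operators $S_N$ on $c_{00}$. When $N=M_r$, $S_{M_r}$ preserves each of $\sigma_1,\dots,\sigma_r$ entirely, yielding the commutations
\[
Q_\sigma(S_{M_r}f)=S_{M_r}(Q_\sigma f), \qquad H_{\XB,\XX,\sigma}(S_{M_r}f)=\sum_{n=1}^r\vv_n^*(f)\,\xx_n,
\]
so the required bound reduces to the $1$-unconditionality of $\EB$ in $\Sym$ (a consequence of subsymmetry) together with the Schauder basis constant of $\XB$. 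For an intermediate $N=M_{r-1}+k$ with $0<k<N_r$, I would write $S_Nf=S_{M_{r-1}}f+h$ with $\supp(h)\subseteq\sigma_r$, control the first summand by the previous case, and use \eqref{eq:A} to reduce $\norm{h}_\YY$ to $\norm{h}_\Sym$; then $\norm{h}_\Sym\le\norm{S_{\sigma_r}f}_\Sym\le\norm{Q_\sigma f}_\Sym+\abs{\vv_r^*(f)}$ via the splitting $f=P_\sigma f+Q_\sigma f$, and $\abs{\vv_r^*(f)}$ is controlled by $\norm{H_{\XB,\XX,\sigma}(f)}_\XX$ through the coordinate functional $\xx_r^*\in\XX^*$, whose norm is uniformly bounded in $r$ since $\XB$ is a Schauder basis of $\XX$.

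For (ii), the natural candidate is
\[
\Phi\colon\YY\to Q_\sigma(\Sym)\oplus\XX, \qquad f\longmapsto (Q_\sigma f,\,H_{\XB,\XX,\sigma}(f)),
\]
which is an isometric embedding by the very definition of $\norm{\cdot}_{\XB,\Sym,\sigma}$. To obtain surjectivity, on the dense subspace of pairs $(g,\sum_n a_n\xx_n)$ with $g\in Q_\sigma(c_{00})$ and $(a_n)$ finitely supported I define
\[
T\big(g,\textstyle\sum_n a_n\xx_n\big)=g+\sum_n a_n\vv_n\in c_{00}\subseteq\YY.
\]
Exploiting $Q_\sigma\vv_n=0$, the biorthogonality $\vv_m^*(\vv_n)=\delta_{m,n}$, and the fact that $\vv_n^*$ annihilates $Q_\sigma(\Sym)$, a direct computation gives $\Phi\circ T=\Id$; since $T$ is itself isometric, it extends to the sought-for inverse. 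Item (iii) is the finite-dimensional shadow of (ii): restricting $\Phi$ to $\YY^{(M_r)}$ lands in $Q_\sigma(\Sym^{(M_r)})\oplus[\xx_j:1\le j\le r]$, with constants inherited from (ii) and hence uniform in $r$.

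For (iv), note that for every $f\in c_{00}$ the vectors $H_{\XB,\XX,\sigma}(f)=\sum_n\vv_n^*(f)\xx_n$ and $H_{\XB',\XX',\sigma}(f)=\sum_n\vv_n^*(f)\xx_n'$ are finite sums with identical scalar coefficients. If $\XB'$ dominates $\XB$, then $\norm{H_{\XB,\XX,\sigma}(f)}_\XX\le C\norm{H_{\XB',\XX',\sigma}(f)}_{\XX'}$, and together with the identity of the $Q_\sigma$-pieces this yields $\norm{f}_{\XB,\Sym,\sigma}\le\max(1,C)\norm{f}_{\XB',\Sym,\sigma}$. Conversely, given scalars $a_1,\dots,a_m$, the finitely supported vector $f=\sum_{j=1}^m a_j\vv_j\in c_{00}$ satisfies $Q_\sigma f=0$ and $H_{\XB,\XX,\sigma}(f)=\sum_j a_j\xx_j$, so $\norm{f}_{\XB,\Sym,\sigma}=\norm{\sum_j a_j\xx_j}_\XX$ and analogously for $\XB'$; the continuous inclusion then reads exactly as dominance. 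The main obstacle I foresee is the intermediate-$N$ case in (i), since $S_N$ does not commute with $Q_\sigma$ when $N$ lies strictly inside a block, and the two-norm structure of $\YY$ forces one to trade the $\Sym$-piece against the $\XX$-piece via the scalar $\vv_r^*(f)$ without losing control of the constants.
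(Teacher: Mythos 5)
Your proof is correct; it reconstructs in full the argument that the paper defers to \cite{AADK2019b}*{Theorem~3.6}, and your adaptation to the quasi-Banach setting — invoking the modulus of concavity of $\norm{\cdot}_{\XB,\Sym,\sigma}$ in the intermediate-$N$ case of (i) and controlling $\abs{\vv_r^*(f)}=\abs{\xx_r^*(H_{\XB,\XX,\sigma}(f))}$ by the uniformly bounded coordinate functionals of $\XB$ — is precisely the ``verbatim extension'' the paper asserts. The isometry $\Phi=(Q_\sigma,H_{\XB,\XX,\sigma})$ with explicit inverse $T$, its restriction to $\YY^{(M_r)}$ for (iii), and the test vectors $\sum_j a_j\vv_j$ for (iv) all match the structure of the cited locally convex proof.
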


\begin{proof}
The proof in the locally convex case (see \cite{AADK2019b}*{Theorem~3.6}) can be extended verbatim to the case when $\XX$ is a quasi-Banach space.\qedhere
\end{proof}

We go deeper in the understanding of the sequence space $\YY[\XB,\Sym,\sigma]$ by studying the most relevant greedy-like properties of its unit vector system.

\subsection{Democracy} Recall that a sequence $\Gamma=(\Gamma_m)_{m=1}^\infty$ in $(0,\infty)$ is said to have the \emph{lower regularity property} (LRP for short) if there is a positive integer $b$ such
\[
2\Gamma_m \le \Gamma_{bm}, \quad m\in\NN.
\]
In turn, we say that $\Gamma$ has the \emph{upper regularity property} (URP for short) if there a positive integer $b$ such
\[
2\Gamma_{bm} \le b \Gamma_{m}, \quad m\in\NN.
\]
In the case when both $\Gamma$ and its dual sequence $\Gamma^*=(m/\Gamma_m)_{m=1}^\infty$ are nondecreasing, then $\Gamma$ has the LRP if and only if $\Gamma^*$ has the URP.

It is known (see \cite{AADK2019b}*{Corollary 3.10}) that if $\Sym$ is locally convex, the sequence $\Lambda[\Sym]$ has the LRP, and the size of the partition $(\sigma_n)_{n=1}^\infty$ `grows steadily', then the unit vector system of $\YY[\XB,\XX,\sigma]$ is democratic. The proof of this result does not pass on to nonlocally convex spaces, however. To obtain the democracy of the unit vector system of $\YY[\XB,\Sym,\sigma]$ in the more general setting of quasi-Banach spaces we must impose additional conditions and use a different argument. We get started with a regularity lemma.

\begin{lemma}\label{lem:regularityrstimate}
Let $(N_n)_{n=1}^\infty$ be a sequence of positive integers such that $M_n:=\sum_{k=1}^n N_k \lesssim N_{n+1}$ for $n\in\NN$, let $(\Gamma_n)_{n=1}^\infty$ be a sequence of positive scalars having the LRP, and let $0<p\le 1$. Assume that $(\Gamma_n)_{n=1}^\infty$ and $(n/\Gamma_n)_{n=1}^\infty$ are non-decreasing. Then
\[
\sup_{r\in\NN} \sum_{n=1}^r \frac{\Gamma^p_{N_n}}{\Gamma^p_{N_r}} <\infty\; \mbox{ and }\;
\sup_{r\in\NN} \sum_{n=r}^\infty \frac{\Gamma^p_{N_r}}{\Gamma^p_{N_n}} <\infty.
\]
\end{lemma}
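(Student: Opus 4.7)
My plan is to first extract geometric control on $(N_n)$ itself from the summation condition, then turn the LRP into a polynomial lower bound for $\Gamma_M/\Gamma_m$ when $M\geq m$, and finally combine these to reduce both bounds to convergent geometric series.

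For the first step, fix $C$ with $M_n\leq C N_{n+1}$ for every $n$. From $M_{n+1}=M_n+N_{n+1}\geq M_n+M_n/C=(1+1/C)M_n$, the sequence $(M_n)$ grows at least at the geometric rate $\gamma:=1+1/C>1$. Since $M_n\geq N_n$ trivially, and $M_n=M_{n-1}+N_n\leq(C+1)N_n$ for $n\geq 2$ (using the hypothesis applied at $n-1$), one obtains that $M_n$ and $N_n$ are comparable up to the constant $C+1$. Consequently, setting $\beta:=1/\gamma\in(0,1)$, one derives constants $c_1,c_2>0$, independent of $n$ and $r$, such that
\[
N_n\leq c_1\beta^{r-n}N_r\quad\text{for }n\leq r,\qquad N_n\geq c_2\gamma^{n-r}N_r\quad\text{for }n\geq r.
\]

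For the second step, let $b\in\NN$ witness the LRP, so $2\Gamma_m\leq\Gamma_{bm}$, and iterate to get $2^k\Gamma_m\leq\Gamma_{b^km}$ for every $k\in\NN$. Given $M\geq m$, choose $k=\lfloor\log_b(M/m)\rfloor$; then $b^km\leq M$, and since $(\Gamma_n)$ is non-decreasing, $2^k\Gamma_m\leq\Gamma_{b^km}\leq\Gamma_M$. Since $2^k\geq \tfrac12(M/m)^{\log_b 2}$, this yields
\[
\frac{\Gamma_m}{\Gamma_M}\leq 2\Bigl(\frac{m}{M}\Bigr)^{\alpha},\qquad \alpha:=\log_b 2>0.
\]

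Finally, I would combine the two ingredients. For $n\leq r$, applying the polynomial bound with $m=N_n$, $M=N_r$ and inserting the geometric comparison gives $\Gamma_{N_n}^p/\Gamma_{N_r}^p\leq 2^p c_1^{p\alpha}\beta^{p\alpha(r-n)}$, and summing over $n=1,\dots,r$ majorises this by the convergent geometric series $\sum_{k=0}^\infty\beta^{p\alpha k}$, uniformly in $r$. The second estimate is entirely symmetric: for $n\geq r$ one gets $\Gamma_{N_r}^p/\Gamma_{N_n}^p\leq 2^p c_2^{-p\alpha}\gamma^{-p\alpha(n-r)}$, and the tail $\sum_{n=r}^\infty$ is bounded by $\sum_{k=0}^\infty\gamma^{-p\alpha k}$, again uniformly in $r$.

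The only non-bookkeeping step is the first one: recognising that the summation-type assumption $M_n\lesssim N_{n+1}$ forces $(N_n)$ itself (not merely $(M_n)$) to grow geometrically in both the "forward" and "backward" senses needed for the two sums. Once this is in place, the LRP delivers a polynomial lower bound on ratios $\Gamma_M/\Gamma_m$ that is more than enough to collapse the two tails into geometric series, so the parameter $p\in(0,1]$ plays no essential role beyond shrinking the exponent.
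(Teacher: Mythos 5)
Your strategy mirrors the paper's: both reduce the two sums to geometric series by combining a polynomial lower bound on $\Gamma_M/\Gamma_m$ (drawn from the LRP) with the geometric growth of the partition forced by $M_n\lesssim N_{n+1}$. Your self-contained derivation of $\Gamma_m/\Gamma_M\le 2(m/M)^{\alpha}$ with $\alpha=\log_b 2$ is a nice replacement for the paper's citation of \cite{AlbiacAnsorena2016}*{Lemma~2.12}.

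However, the final step has a genuine gap, and it is not bookkeeping. Your polynomial bound is established only for $m\le M$, yet you then plug in $m=N_n$, $M=N_r$ for $n\le r$ (and symmetrically $m=N_r$, $M=N_n$ for $n\ge r$) without verifying $N_n\le N_r$. The hypotheses do \emph{not} force $(N_n)$ to be monotone: they force $M_n\le(C+1)N_{n+1}$, which is compatible with $N_{n+1}<N_n$. Your own geometric comparison $N_n\le c_1\beta^{r-n}N_r$ with $c_1=C+1>1$ makes this visible — for $r-n$ small one can perfectly well have $N_n>N_r$. The lemma is actually false if one drops the hypothesis that $(n/\Gamma_n)$ is non-decreasing (take $\Gamma_n=2^n$, which has the LRP, and an oscillating $(N_n)$: then $\Gamma_{N_n}/\Gamma_{N_{n+1}}=2^{N_n-N_{n+1}}$ can be unbounded even though $N_n/N_{n+1}$ is bounded), and your argument never invokes that hypothesis, so it cannot be complete. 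To close the gap one must use it: when $N_n>N_r$, the monotonicity of $(n/\Gamma_n)$ gives $\Gamma_{N_n}/\Gamma_{N_r}\le N_n/N_r\le c_1\beta^{r-n}$, and since $\beta^{r-n}\le\beta^{\alpha(r-n)}$ this still yields a summable geometric bound with a worse constant. The paper sidesteps the whole issue by transferring the estimate from $(N_n)$ to the genuinely monotone $(M_n)$, using exactly the hypothesis you omitted to control the ratios $\Gamma_{M_r}/\Gamma_{N_r}\le M_r/N_r\le C_2$.
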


\begin{proof}
An appeal to \cite{AlbiacAnsorena2016}*{Lemma 2.12} yields constants $0<\alpha<1$ and $0<C_1<\infty$ such that
\[
\frac{\Gamma_n}{n^\alpha} \le C_1 \frac{\Gamma_m}{m^\alpha}\quad \mbox{for all } n\le m.
\]
Let $C_2>1$ be such that $M_r:=\sum_{n=1}^r N_n\le C_2 N_r$ for all $r\in\NN$. Then, if $t=C_2/(1+C_2)$ we have
$M_{r+1}\le t M_r $ for $r\in\NN$. Hence,
\begin{align*}
\sum_{n=1}^r \frac{\Gamma^p_{N_n}}{\Gamma^p_{N_r}}
& \le \sum_{n=1}^r \frac{\Gamma^p_{M_n}}{\Gamma^p_{N_r}}\\
& \le \sum_{n=1}^r \frac{M_r^p}{N_r^p} \frac{\Gamma^p_{M_n}}{\Gamma^p_{M_r}}\\
& \le C_2^p\sum_{n=1}^r \frac{M_r^p}{M_n^p} \frac{\Gamma^p_{M_n}}{\Gamma^p_{M_r}}\\
& \le C_1^p C_2^p \sum_{n=1}^r \left(\frac{M_n}{M_r}\right)^{p(1- \alpha)}\\
&\le C_1^p C_2^p \sum_{n=1}^r t^{p (1-\alpha)(n-r)}\\
&\le C_1^p C_2^p \frac{1}{1-t^{p(1-\alpha)}}.
\end{align*}

Similarly,
\begin{align*}
\sum_{n=r}^\infty \frac{\Gamma^p_{N_r}}{\Gamma^p_{N_n}}
& \le \sum_{n=r}^\infty\frac{M_n^p}{N_n^p} \frac{\Gamma^p_{N_r}}{\Gamma^p_{M_n}}\\
& \le \sum_{n=r}^\infty\frac{M_n^p}{N_n^p} \frac{\Gamma^p_{M_r}}{\Gamma^p_{M_n}}\\
& \le C_2^p \sum_{n=r}^\infty \frac{\Gamma^p_{M_r}}{\Gamma^p_{M_n}}\\
&\le C_1^p C_2^p \sum_{n=r}^\infty \left(\frac{M_r}{M_n}\right)^{p \alpha}\\
&\le C_1^p C_2^p \sum_{n=r}^\infty t^{p \alpha(n-r)}\\
&= C_1^p C_2^p \frac{1}{1-t^{p\alpha}},
\end{align*}
as desired.
\end{proof}

From now on, a \emph{weight} will be a non-negative sequence $(w_n)_{n=1}^\infty$ with $w_1>0$. Given $0< q<\infty$ and a weight $\ww=(w_n)_{n=1}^\infty$, the \emph{Lorentz sequence space} $d_q(\ww)$ consists of all sequences $f=(a_n)_{n=1}^\infty$ in $c_{0}$ whose non-increasing rearrangement $(a_n^*)_{n=1}^\infty$ verifies
\[
\norm {f }_{d_q(\ww)}= \left( \sum_{n=1}^\infty ( s_n a_n^*)^q \frac{w_n}{s_n} \right)^{1/q} <\infty,
\]
where $s_n=\sum_{k=1}^n w_k$. In turn, the \emph{weak Lorentz sequence space} $d_\infty(\ww)$ consists of all sequences $f=(a_n)_{n=1}^\infty$ in $c_0$ whose non-increasing rearrangement $(a_n^*)_{n=1}^\infty$ satisfies
\[
\norm{ f}_{d_\infty(\ww)}=\sup_n a_n^* s_n<\infty.
\]
We have $\Lambda_m[d_q(\ww)] \approx s_m$ for $m\in\NN$. Moreover if $0< p\le q\le \infty$,
\begin{equation*}
\norm {f }_{d_q(\ww)} \lesssim \norm {f }_{d_p(\ww)}, \quad f\in c_0.
\end{equation*}
Although the notation of Lorentz sequence spaces highlights the weight $\ww$, it has to be made explicit that they rather depend on the primitive sequence $(s_m)_{m=1}^\infty$, as the following result shows.
\begin{lemma}[see \cite{AABW2021}*{\S9}]\label{lem:LorentzEq}
Let $\ww=(w_n)_{n=1}^\infty$ and $\ww'=(w_n')_{n=1}^\infty$ be weights, and let $0<q\le\infty$. Then $\norm {f}_{d_q(\ww)} \approx \norm {f }_{d_p(\ww')}$ for $f\in c_0$ if and only if $\sum_{n=1}^m w_n \approx \sum_{n=1}^m w_n'$ for $m\in\NN$.
\end{lemma}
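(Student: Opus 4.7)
The plan is to find a re-expression of $\norm{f}_{d_q(\ww)}$ that depends on the weight $\ww$ only through the primitive sequence $s_m:=\sum_{k=1}^m w_k$. Once this is available, the sufficient direction follows immediately, while the necessary direction drops out by testing the equivalence on the initial segments $\sum_{n=1}^m \ee_n$.

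First I would establish, for $0<q<\infty$, the equivalence
\[
\norm{f}_{d_q(\ww)}^q \;\approx\; \sum_{n=1}^\infty (a_n^*)^q\bigl(s_n^q-s_{n-1}^q\bigr), \qquad f\in c_0,
\]
with the convention $s_0:=0$ and constants depending only on $q$; the case $q=\infty$ is already of the stated shape since $\norm{f}_{d_\infty(\ww)}=\sup_n a_n^* s_n$. Because $w_n=s_n-s_{n-1}$, the reduction boils down to the termwise comparison $s_n^{q-1} w_n \approx s_n^q-s_{n-1}^q$, which I would extract from the identity $s_n^q-s_{n-1}^q=\int_{s_{n-1}}^{s_n} q t^{q-1}\,dt$ by a dichotomy on the ratio $s_n/s_{n-1}$. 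In the regime $s_n\le 2 s_{n-1}$, the factors $s_{n-1}^{q-1}$ and $s_n^{q-1}$ are comparable and both sides are immediately sandwiched between $q s_{n-1}^{q-1} w_n$ and $q s_n^{q-1} w_n$ (the direction of the inequalities depending on whether $q\ge 1$ or $q<1$). In the regime $s_n>2 s_{n-1}$, one has $w_n>s_n/2$, so that $s_n^q-s_{n-1}^q$ and $s_n^{q-1} w_n$ are both comparable to $s_n^q$. This handles uniformly the convex range $q\ge 1$ and the nonconvex range $0<q<1$.

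With the reduction in hand, the sufficient direction is clear: if $s_m\approx s_m'$, then the displayed right-hand side for $\ww$ is equivalent to the one for $\ww'$ at every $f\in c_0$, so $\norm{f}_{d_q(\ww)}\approx\norm{f}_{d_q(\ww')}$. For the necessary direction, I would evaluate both norms on $f_m:=\sum_{n=1}^m \ee_n$, whose decreasing rearrangement is $m$ ones followed by zeros. Telescoping in the reduction formula (and using directly $\sup_n s_n=s_m$ when $q=\infty$) gives $\norm{f_m}_{d_q(\ww)}\approx s_m$ and $\norm{f_m}_{d_q(\ww')}\approx s_m'$, in accordance with the estimate $\Lambda_m[d_q(\ww)]\approx s_m$ already recorded in the excerpt. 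The hypothesized norm equivalence then forces $s_m\approx s_m'$.

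The only genuinely delicate point is the termwise comparison in the nonconvex range $0<q<1$: there $t\mapsto t^{q-1}$ is decreasing, so integrating the mean-value bound can go the wrong direction when $s_n/s_{n-1}$ is unbounded. The dichotomy $s_n\le 2 s_{n-1}$ versus $s_n>2 s_{n-1}$ above is what makes the estimates robust in both regimes; everything else is routine bookkeeping.
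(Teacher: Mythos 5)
The paper does not supply its own argument for this lemma; it cites \cite{AABW2021}*{\S9}, so there is no in-text proof to compare against. Judged on its own merits, your proof is essentially correct and a natural one. The termwise reduction $s_n^{q-1}w_n \approx s_n^q - s_{n-1}^q$ with constants depending only on $q$, obtained by the dichotomy on $s_n/s_{n-1}$, is carefully done and handles both $q\ge 1$ and $0<q<1$; the necessity step by testing on $\sum_{n=1}^m\ee_n$ is also fine and reproduces $\Lambda_m[d_q(\ww)]\approx s_m$.

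There is, however, one step you pass over too quickly. In the sufficiency direction you assert that once the norm is rewritten as $\sum_n (a_n^*)^q(s_n^q-s_{n-1}^q)$, comparability $s_m\approx s_m'$ makes this equivalent to the corresponding sum for $\ww'$. That is true, but it is not a termwise statement: even when $s_n\approx s_n'$ the increments $s_n^q-s_{n-1}^q$ and $(s_n')^q-(s_{n-1}')^q$ can be wildly incomparable term by term (take $q=1$ with $w_n\equiv 1$ and $w'$ alternating between $0$ and $2$). You need one more Abel summation: since $(a_n^*)^q$ is nonincreasing and tends to $0$, and $s_0^q=0$,
\[
\sum_{n=1}^\infty (a_n^*)^q\bigl(s_n^q-s_{n-1}^q\bigr)
=\sum_{n=1}^\infty \bigl((a_n^*)^q-(a_{n+1}^*)^q\bigr)\,s_n^q,
\]
an identity valid in $[0,\infty]$, and the right-hand side has nonnegative coefficients multiplying $s_n^q$, so it is manifestly monotone in the sequence $(s_n)$ and hence respects $s_m\approx s_m'$. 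With that line added (and the trivial observation for $q=\infty$), the sufficiency direction is airtight and the whole argument is complete.
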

The reader will find in \cite{AABW2021}*{\S9} the background needed on Lorentz sequence spaces. Our interest in these spaces lays in their applicability to describing certain features of bases which we specify next.

A quasi-Banach space $\YY$ is said to be \emph{squeezed between two symmetric sequence spaces $\Sym_1$ and $\Sym_2$ via a biorthogonal system} $(\yy_n,\yy_n^*)_{n=1}^\infty$ if the \emph{series transform}, given by
\[
(a_n)_{n=1}^\infty\mapsto \sum_{n=1}^\infty a_n\, \yy_n
\]
is bounded from $\Sym_1$ into $\XX$, and the \emph{coefficient transform}
\[
f\mapsto \Fou(f):=(\yy_n^*(f))_{n=1}^\infty
\]
is bounded from $\XX$ into $\Sym_2$. We say that $\YY$ is squeezed between $\Sym_1$ and $\Sym_2$ via a Schauder basis $\YB$ with coordinate functionals $\YB^*$ if it is squeezed between $\Sym_1$ and $\Sym_2$ via the biorthogonal system $(\YB,\YB^*)$. It the case when $\YB$ is the unit vector system of a sequence space $\YY$, this is equivalent to having continuous embeddings
\[
\Sym_1\subseteq \YY\subseteq \Sym_2.
\]
If the squeezing spaces $\Sym_1$ and $\Sym_2$ are close to each other in the sense that
\[
\Lambda[\Sym_1] \approx \Lambda[\Sym_2],
\]
we say the the biorthogonal system is \emph{squeeze-symmetric}. A sequence (commonly a Schauder basis) $(\yy_n)_{n=1}^\infty$ in $\YY$ will be said to be squeeze-symmetric if there is $(\yy_n^*)_{n=1}^\infty$ in $\YY^*$ such that the biorthogonal system $(\yy_n,\yy_n^*)_{n=1}^\infty$ is squeeze-symmetric.

If $(\yy_n)_{n=1}^\infty$ is squeeze-symmetric, in particular it is democratic. In fact, if we introduce the \emph{upper super-democracy function} of a basis (also called the \emph{fundamental function}), given for $m\in \NN$ by
\[
\sudf[\XB,\XX](m)=\sup\left\{ \norm{\Ind_{\varepsilon,A}[\XB,\XX]} \colon \abs{A} \le m, \, \varepsilon\in\EE^A\right\},
\]
and the \emph{lower super-democracy function} of the basis, defined for $m\in\NN$ by
\[
\sldf[\XB,\XX](m)=\inf\left\{ \norm{\Ind_{\varepsilon,A}[\XB,\XX]} \colon \abs{A} \ge m, \, \varepsilon\in\EE^A\right\},
\]
then the fundamental function of a squeeze-symmetric basis is equivalent to the fundamental function of the symmetric sequence spaces which sandwich the space $\YY$.

It is known that if a basis $\YB$ is squeeze-symmetric and $\YY$ is locally $r$-convex, $0<r\le 1$, then we can choose $\Sym_1=d_r(\ww)$ and $\Sym_2=d_\infty(\ww)$ as squeezing spaces, where the primitive weight of $\ww$ is equivalent to the fundamental function of $\YB$ (see \cite{AABW2021}*{Theorem~9.14}). We observe that the same result, with the same proof, holds for biorthogonal systems.

In the proof of the following theorem we use the concept of doubling function. Recall that a sequence $\Gamma=(\Gamma_m)_{m=1}^\infty$ in $(0,\infty)$ is said to be \emph{doubling} if there is a constant $C$ such that $\Gamma_{2m} \le C \Gamma_m$ for all $m\in\NN$. The \emph{dual sequence} of $\Gamma$ is the sequence $\Gamma^*=(m/\Gamma_m)_{m=1}^\infty$. If $\Gamma^*$ is non-decreasing, then $\Gamma_{bm} \le b\Gamma_m$ for all $b$ and $m\in\NN$, so $\Gamma$ is doubling.

\begin{theorem}\label{thm:SqSy}
Assume that all the hypotheses of Theorem~\ref{thm:FirstProperties} hold, that $\Lambda[\Sym]$ has both the LRP and the URP, and that $M_r \lesssim N_{r+1}$ for $r\in\NN$. Then the unit vector system of $\YY=\YY[\XB,\Sym,\sigma]$ is squeeze-symmetric. In particular it is democratic and its fundamental function grows as $\Lambda[\Sym]$.
\end{theorem}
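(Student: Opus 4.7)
The plan is to sandwich $\YY$ between the Lorentz sequence spaces $d_r(\ww)$ and $d_\infty(\ww)$ via the unit vector system, where $\ww$ is the weight with primitive $s_m=\Lambda_m[\Sym]$ and $r\in(0,1/2]$ is chosen (via Aoki--Rolewicz) small enough that $\XX$, and hence by Theorem~\ref{thm:FirstProperties}\ref{FP:2} also $\YY$, admits an equivalent $r$-norm. Both Lorentz spaces then have fundamental function equivalent to $\Lambda[\Sym]$, so proving the continuous embeddings $d_r(\ww)\hookrightarrow\YY\hookrightarrow d_\infty(\ww)$ will establish squeeze-symmetry with the claimed fundamental function.

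For the upper embedding, $r$-convexity of $\YY$ combined with the standard level-set decomposition reduces the problem to the fundamental-function bound $\norm{\Ind_{\varepsilon,A}}_\YY\lesssim\Lambda_m$ for $m=|A|$. Setting $m_n=|A\cap\sigma_n|$, the $Q_\sigma$-component of the quasi-norm is immediately bounded by $3\Lambda_m$ via $\norm{Q_\sigma}_{\Sym\to\Sym}\le 3$ and the symmetry of $\Sym$; meanwhile $H_{\XB,\XX,\sigma}(\Ind_{\varepsilon,A})=\sum_n c_n\xx_n$ with $|c_n|\le(\Lambda_{N_n}/N_n)m_n$, so $r$-convexity reduces the $\XX$-component to the key inequality
\[
\sum_n\left(\frac{\Lambda_{N_n}}{N_n}\right)^{r}m_n^r\le C\,\Lambda_m^r\qquad\text{whenever}\ \sum_n m_n=m\ \text{and}\ 0\le m_n\le N_n.
\]
A KKT analysis of this constrained maximization (noting that the weight $(\Lambda_{N_n}/N_n)^r$ is decreasing in $n$ while $N_n$ grows) shows the extremum saturates $m_n=N_n$ for the first $R$ blocks and distributes the residual $m-M_R$ over the tail according to the Lagrange rule $m_n\propto(\Lambda_{N_n}/N_n)^{r/(1-r)}$. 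The filled contribution $\sum_{n\le R}\Lambda_{N_n}^r\le C\Lambda_{N_R}^r\le C\Lambda_m^r$ follows from Lemma~\ref{lem:regularityrstimate} applied to $\Lambda[\Sym]$, while the tail, evaluated via the explicit Lagrange formula, is handled by applying the same lemma to $\Gamma=\Lambda^*$ (which has LRP because $\Lambda$ has URP) with the auxiliary exponent $s=r/(1-r)\le 1$; finally $M_r\lesssim N_{r+1}$ is used to compare $N_{R+1}$ with $m$.

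For the lower embedding it suffices to show that whenever $\varepsilon_j\ee_j^*(f)\ge t>0$ on a set $A$ of cardinality $m$, then $t\Lambda_m\lesssim\norm{f}_\YY$. I would prove this by bounding the norm on $\YY$ of the functional $T(g)=\sum_{j\in A}\varepsilon_j\,\ee_j^*(g)$: splitting $g=Q_\sigma g+P_\sigma g$ and using $(P_\sigma g)_j=\vv_n^*(g)/\Lambda_{N_n}$ for $j\in\sigma_n$ produces a pairing with $Q_\sigma g$ (bounded by $\Lambda_m^*[\Sym]\norm{Q_\sigma g}_\Sym$ via the subsymmetry of $\Sym$) and a sum $\sum_n(s_n/\Lambda_{N_n})\vv_n^*(g)$ with $|s_n|\le m_n$; since $|\vv_n^*(g)|\lesssim\norm{H(g)}_\XX$ by uniform continuity of the coordinate functionals of $\XB$, the second part reduces to controlling $\sum_n m_n/\Lambda_{N_n}\lesssim\Lambda_m^*[\Sym]$, handled by the same greedy-fill argument through Lemma~\ref{lem:regularityrstimate}. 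The principal obstacle throughout is the central upper-embedding inequality: in the Banach ($r=1$) case it follows from the near-concavity of $\Lambda$ via the triangle inequality, but the quasi-Banach case genuinely requires both LRP and URP of $\Lambda$, the geometric growth $M_r\lesssim N_{r+1}$, and a double invocation of Lemma~\ref{lem:regularityrstimate}, first with $\Lambda$ and then with $\Lambda^*$, to dispatch the filled and the spread regimes simultaneously.
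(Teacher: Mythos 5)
Your strategy is the same as the paper's: squeeze $\YY$ between $d_r(\ww)$ and $d_\infty(\ww)$ with $\ww$ having primitive $\Lambda[\Sym]$, invoke \cite{AABW2021}*{Corollary 9.13} to reduce the upper embedding to the fundamental-function bound $\sudf[\EB,\YY](m)\lesssim\Lambda_m$, and then estimate each component of $\norm{\Ind_{\varepsilon,A}}_{\XB,\Sym,\sigma}$ via Lemma~\ref{lem:regularityrstimate} applied to $\Lambda[\Sym]$ and $\Lambda^*[\Sym]$. Two remarks on where you diverge.

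First, for the $\XX$-component you set up a constrained optimization and invoke a KKT/Lagrange water-filling analysis. This is unnecessary and, as sketched, underdetermined (the threshold $R$, the multiplier $\lambda$, and the final comparison with $\Lambda_m^r$ are all left implicit). Since you only need an \emph{upper} bound, the paper's crude estimate $m_n=\abs{A\cap\sigma_n}\le\min\{m,N_n\}$ together with the split at the index $r$ determined by $M_{r-1}<m\le M_r$ already yields
\[
\sum_{n}\Bigl(\frac{\Lambda_{N_n}}{N_n}\Bigr)^{p}m_n^{p}
\le\sum_{n<r}\Lambda_{N_n}^{p}+m^{p}\sum_{n\ge r}\frac{1}{(\Lambda_{N_n}^*)^{p}}
\lesssim\Lambda_{N_{r-1}}^{p}+\frac{N_r^p}{(\Lambda_{N_r}^*)^p}\lesssim\Lambda_m^{p},
\]
by Lemma~\ref{lem:regularityrstimate} (applied once to $\Lambda$ and once to $\Lambda^*$) and the doubling of $\Lambda$. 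That avoids any appeal to the precise structure of the extremizer and sidesteps the auxiliary exponent $r/(1-r)$ altogether.

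Second, for the lower embedding $\YY\hookrightarrow d_\infty(\ww)$ the paper merely observes that the argument of \cite{AADK2019b}*{Theorem~3.9} carries over because the only property of $\XX$ used there is that the coefficient map lands in $c_0$, whereas you give a direct dual estimate: you bound $\norm{\sum_{j\in A}\varepsilon_j\ee_j^*}_{\YY^*}\lesssim\Lambda_m^*[\Sym]$ by splitting through $P_\sigma$ and $Q_\sigma$ and controlling $\sum_n m_n/\Lambda_{N_n}$ by the same regularity lemma. This is a correct and self-contained alternative to the citation, and is a nice way to make the lower embedding explicit. So the proposal is correct in substance; the KKT detour should simply be replaced by the cruder cap $m_n\le\min\{m,N_n\}$, which is both simpler and sufficient.
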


\begin{proof}
Set $\Lambda[\Sym]=(\Lambda_n)_{n=1}^\infty$ and $\ww=(\Lambda_n - \Lambda_{n-1})_{n=1}^\infty$. A close look at the proof of \cite{AADK2019b}*{Theorem~3.9} reveals that the only information we need about $\XB$ and $\XX$ in order to obtain the embedding $\YY\subseteq d_\infty(\ww)$ is that the coefficient transform maps $\XX$ into $c_0$. Therefore this embedding still holds in the case when $\XX$ is a quasi-Banach space. Thus, by \cite{AABW2021}*{Corollary 9.13}, it suffices to prove that $\sudf[\EB,\YY](m) \lesssim \Lambda_m$ for $m\in\NN$. To that end we will use that the sequence $\Lambda^*[\Sym]=(\Lambda^*_m)_{m=1}^\infty$ has the LRP.

Without loss of generality we may suppose that $\XX$ is a $p$-Banach space for some $0<p\le 1$. Set $c=\sup_n \norm{\xx_n}$. Since $\Lambda$ is doubling, there is a constant $C$ such that
\[
\Lambda_{M_r} \le C \Lambda_{1+M_{r-1}}, \quad r\in\NN.
\]
By Lemma~\ref{lem:regularityrstimate} there is a constant $C_1$ such that
\[
\sum_{n=1}^{r-1} \Lambda_{N_n}^p \le \frac{C_1^p}{\Lambda_{N_{r-1}}^p} \quad \mbox{and} \quad
\sum_{n=r}^\infty \frac{1}{(\Lambda_{N_n}^*)^p} \le \frac{C_1^p}{\Lambda_{N_r}^p}, \quad r\in\NN.
\]

Given $m\in\NN$, choose $r\in\NN$ such that $M_{r-1}< m \le M_{r}$. Pick $A\subseteq\NN$ with $\abs{A}\le m$, and $\varepsilon\in\EE^A$. We have $\abs{A\cap \sigma_n} \le \min\{m,N_n\}$ for all $n\in\NN$. Hence,

\begin{align*}
\norm{H_{\Sym,\XX,\sigma}(\Ind_{\varepsilon,A})}^p
&\le c^p \sum_{n=1}^\infty \abs{\vv_n^*(\Ind_{\varepsilon,A})}^p\\
&\le c^p\sum_{n=1}^\infty \frac{\abs{A\cap\sigma_n}^p}{(\Lambda_{N_n}^*)^p}\\
&\le c^p\sum_{n=1}^{r-1} \frac{N_n^p}{(\Lambda_{N_n}^*)^p}+ c^p m^p\sum_{n=r}^\infty \frac{1}{(\Lambda_{N_n}^*)^p}\\
&\le c^p\sum_{n=1}^{r-1} \Lambda_{N_n}^p+ c^p N_r^p\sum_{n=r}^\infty \frac{1}{(\Lambda_{N_n}^*)^p}\\
&\le \Lambda_{N_{r-1}}^p + c^p C_1^p\frac{N_r^p}{(\Lambda_{N_r}^*)^p}\\
&\le c^p C_1^p (1+C) \Lambda_m.
\end{align*}

In turn,
\[
\norm{Q_\sigma(\Ind_{\varepsilon,A})}_\Sym\le 3 \norm{ \Ind_{\varepsilon,A}}_\Sym\le 3 \Lambda_m.\qedhere
\]
\end{proof}

\subsection{Quasi-greediness}\label{DKKQG}
Once we know that a basis is democratic, in order to prove that it is almost greedy we must show that it is quasi-greedy (see \cite{AABW2021}*{Theorem 6.3}). To address this task, we will take advantage of the vestiges of unconditionality retained by squeeze-symmetric bases.

\begin{lemma}\label{lem:Dem+TQG}
Let $\YB=(\yy_n)_{n=1}^\infty$ be a squeeze-symmetric basis of a quasi-Banach space $\YY$ with coordinate functionals $(\yy_n^*)_{n=1}^\infty$. Then there is a constant $C$ such that $\norm{f}\le C\norm{g}$ whenever $f$, $g\in\YY$ satisfy
\[
\abs{\supp(f)}\le \abs{\{ n\in\NN \colon \max_{s\in\NN} \abs{\yy_s^*(f)} \le \abs{\yy_n^*(g)}\}}.
\]
\end{lemma}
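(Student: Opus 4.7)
The plan is to exploit the two symmetric sequence spaces $\Sym_1$ and $\Sym_2$ witnessing the squeeze-symmetry of $(\yy_n,\yy_n^*)_{n=1}^\infty$: the series transform is bounded $\Sym_1\to\YY$ with constant $C_1$, the coefficient transform is bounded $\YY\to\Sym_2$ with constant $C_2$, and $\Lambda_m[\Sym_1]\approx\Lambda_m[\Sym_2]$. Introduce $A=\supp(f)$, $m=\abs{A}$, $\alpha=\max_s\abs{\yy_s^*(f)}$, and $B=\{n\in\NN\colon\abs{\yy_n^*(g)}\ge\alpha\}$, so the hypothesis reads $m\le\abs{B}$. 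Setting aside the trivial cases $f=0$ and $\alpha=0$, I assume $\alpha>0$ and $m\ge 1$; then $\abs{B}<\infty$ since $\Sym_2\subseteq c_0$ in the non-degenerate case.

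The core of the argument is a two-sided sandwich. For the upper bound on $\norm{f}_\YY$, the coefficient sequence of $f$ is supported on $A$ with entries of modulus at most $\alpha$, so the lattice property of the symmetric space $\Sym_1$ yields $\norm{(\yy_n^*(f))_{n=1}^\infty}_{\Sym_1}\le\alpha\,\Lambda_m[\Sym_1]$; applying the series transform to the basis expansion $f=\sum_n\yy_n^*(f)\yy_n$ then gives $\norm{f}_\YY\le C_1\alpha\,\Lambda_m[\Sym_1]$. For the lower bound via $g$, the sequence $\alpha\,\Ind_B[\EB,\Sym_2]$ is dominated pointwise in modulus by $(\yy_n^*(g))_{n=1}^\infty$, so the lattice property of $\Sym_2$ gives
\[
\alpha\,\Lambda_{\abs{B}}[\Sym_2]=\norm{\alpha\,\Ind_B[\EB,\Sym_2]}_{\Sym_2}\le\norm{(\yy_n^*(g))_{n=1}^\infty}_{\Sym_2}\le C_2\norm{g}_\YY,
\]
and monotonicity of $\Lambda[\Sym_2]$ combined with $\abs{B}\ge m$ produces $\alpha\,\Lambda_m[\Sym_2]\le C_2\norm{g}_\YY$.

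Combining these estimates via $\Lambda_m[\Sym_1]\approx\Lambda_m[\Sym_2]$ gives $\norm{f}_\YY\lesssim\alpha\,\Lambda_m[\Sym_1]\lesssim\alpha\,\Lambda_m[\Sym_2]\lesssim\norm{g}_\YY$, as required. The only mildly delicate step is the lattice-type inequality in the $\Sym_i$, which is a standard consequence of rearrangement invariance: the norm on a symmetric sequence space depends only on the non-increasing rearrangement of the moduli, so pointwise domination of absolute values implies norm domination. Beyond this routine point, the argument is merely a clean composition of the two squeezing embeddings, with no real obstacle to surmount.
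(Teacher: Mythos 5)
Your proof is correct and follows essentially the same route as the paper's: sandwich $\norm{f}$ by pushing the coefficient sequence of $f$ into $\Sym_1$ (where it is controlled by $\alpha\,\Lambda_m[\Sym_1]$ via the lattice property and the size of its support), push the coefficient sequence of $g$ into $\Sym_2$ to obtain $\alpha\,\Lambda_{|B|}[\Sym_2]\lesssim\norm{g}$, and compare via $\Lambda[\Sym_1]\approx\Lambda[\Sym_2]$ and monotonicity of the fundamental function. The only cosmetic difference is where the monotonicity of $\Lambda$ is invoked (the paper fixes $m=|B|$ and uses $|\supp(f)|\le m$ directly, while you fix $m=|\supp(f)|$ and pass from $\Lambda_m$ to $\Lambda_{|B|}$), which changes nothing substantive.
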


\begin{proof}
Suppose that $\YY$ is squeezed between $\Sym_1$ and $\Sym_2$, and that $\Lambda[\Sym_1]\approx \Lambda[\Sym_2]$. Let $f$ and $g$ be as in the statement of the lemma. Set $t= \max_{n\in\NN} \abs{\yy_n^*(f)}$ and $m= \abs{\{ n\in\NN \colon t \le \abs{\yy_n^*(g)}\}}$. We have
\[
\norm{f} \lesssim \norm{\Fou(f)}_{\Sym_1}
\le t \Lambda_{m}[\Sym_1]
\approx t \Lambda_m[\Sym_2]
\le \norm{\Fou(g)}_{\Sym_2} \lesssim \norm{g}.\qedhere
\]
\end{proof}

We notice that, as a matter of fact, the property in Lemma~\ref{lem:Dem+TQG} characterizes squeeze-symmetric bases. Next, we introduce an asymptotic unconditionality property which combined with squeeze-symmetry produces almost greedy bases.

If $\XB=(\xx_n)_{n=1}^\infty$ is a Schauder basis of $\XX$ with coordinate functionals $\XB^*=(\xx_n^*)_{n=1}^\infty$, and $A$ is a subset of $\NN$, the \emph{coordinate projection} onto the closed linear span $[\xx_{n}\colon n\in A]$ is the linear operator
\[
S_A[\XB,\XX]\colon \XX\to \XX, \quad f\mapsto \sum_{n\in A} \xx_n^*(f) \, \xx_n.
\]
A basis $\YB=(\yy_n)_{n=1}^\infty$ of a quasi-Banach space $\XX$ is \emph{quasi-greedy} if there is a constant $C$ such that $\norm{S_A(f)} \le C \norm{f}$ for every $f\in\XX$ and every greedy set $A$ of $f$, i.e., $\abs{\yy_k^*(f)}\le\abs{\yy_j^*(f)}$ for all $j\in A$ and all $k\in\NN\setminus A$. Since greedy sets depend on the function $f$, quasi-greediness could be regarded as a nonlinear property. The following definition singles out a linear property which combined with squeeze-symmetry, implies quasi-greediness, as shown in Proposition~\ref{prop:SqSyAU} below.

\begin{definition}\label{def:AU}
Let $\XB$ be a Schauder basis of a quasi-Banach space. We say that $\XB$ is \emph{asymptotically suppression unconditional} if there exists a constant $D$ such that $\norm{S_A}\le D$ whenever $\abs{A} <\min A$.
\end{definition}

Next we show that we can relax the condition in Definition~\ref{def:AU} to obtain an equivalent property to asymptotic suppression unconditionality.

\begin{lemma}\label{lem:CASU}
Let $\XB$ be a Schauder basis of a quasi-Banach space. Suppose that there and $D\in(0,\infty)$, $b\in\NN$, and $d\in\NN$ such that $\norm{S_A}\le D$ whenever $\abs{A}> d$ and $ b \abs{A}<\min A$. Then $\XB$ is asymptotically suppression unconditional.
\end{lemma}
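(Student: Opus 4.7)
The plan is to take an arbitrary $A\subseteq \NN$ with $\abs{A}<\min A$ and split it into a bounded number of pieces $A_1,\dots,A_s$ that each satisfy the stronger hypothesis $b\abs{A_i}<\min A_i$, then combine the bounds using $p$-convexity of $\XX$ (passing to a $p$-Banach renorming if necessary). Because $\XB$ is a Schauder basis, the individual rank-one coordinate projections are uniformly bounded, say $\norm{S_{\{n\}}}\le K$ for all $n$, so that for any finite $B\subseteq\NN$ with $\abs{B}\le M$ one has the crude estimate
\[
\norm{S_B}\le M^{1/p}K,
\]
which will be used to handle all the pieces that are ``too small'' for the hypothesis to apply.

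Set $n_1=\min A$ and $m=\abs{A}$, so $m\le n_1-1$. I would first dispose of the case $n_1\le 2b+1$: here automatically $m\le 2b$, and the crude estimate above gives a bound $\norm{S_A}\le(2b)^{1/p}K$ independent of $A$. Otherwise $n_1\ge 2b+2$, and I define $k=\lfloor(n_1-1)/b\rfloor\ge 1$, which satisfies $bk<n_1$. Listing $A$ in increasing order, I partition it into consecutive blocks $A_1,\dots,A_s$ of size at most $k$. Then for each $i$,
\[
b\abs{A_i}\le bk<n_1\le \min A_i,
\]
so the hypothesis of the lemma is available whenever $\abs{A_i}>d$. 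A short calculation shows that $k\ge (n_1-1-b)/b\ge n_1/(2b)$ once $n_1\ge 2b+2$, whence $s\le \lceil m/k\rceil\le \lceil 2bm/n_1\rceil\le 2b$, because $m<n_1$.

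Now for each block $A_i$ either $\abs{A_i}>d$, in which case the assumption gives $\norm{S_{A_i}}\le D$, or $\abs{A_i}\le d$, in which case the crude estimate yields $\norm{S_{A_i}}\le d^{1/p}K$. Writing $S_A=\sum_{i=1}^s S_{A_i}$ and using $p$-convexity we conclude
\[
\norm{S_A(f)}^p\le \sum_{i=1}^s\norm{S_{A_i}(f)}^p\le s\,\max\bigl(D,d^{1/p}K\bigr)^p\norm{f}^p,
\]
so $\norm{S_A}\le(2b)^{1/p}\max(D,d^{1/p}K)$. Comparing with the estimate obtained in the first case, the final bound
\[
D':=(2b)^{1/p}\max\bigl(D,d^{1/p}K\bigr)
\]
works uniformly, which is exactly asymptotic suppression unconditionality.

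The only step that is not purely bookkeeping is choosing the right block size in the partition: one must verify simultaneously that $bk<n_1$ (to make the stronger hypothesis apply to each piece) and that the number of blocks stays bounded by a constant depending only on $b$ (so that the $p$-convexity loss is harmless). I expect the slight gap between $m$ and $n_1$, which is only $m\le n_1-1$ rather than $bm<n_1$, to be the subtle point, and it is precisely the choice $k=\lfloor(n_1-1)/b\rfloor$ together with the separate treatment of very small $n_1$ that bridges this gap.
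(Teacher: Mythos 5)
Your proof is correct and follows essentially the same strategy as the paper's: partition $A$ into roughly $2b$ consecutive blocks small enough that each block satisfies $b\abs{A_i}<\min A_i$, handle small blocks and small $\min A$ by the crude rank bound, and combine via the quasi-triangle inequality. The only cosmetic difference is that you pass to a $p$-norm via Aoki--Rolewicz and choose block size $\lfloor(\min A-1)/b\rfloor$, whereas the paper works directly with the constant $\kappa_{2b}$ and fixes exactly $2b$ blocks of size $\lceil\abs{A}/(2b)\rceil$; the bookkeeping is interchangeable.
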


\begin{proof}
If $\abs{A}\le d$ then $\norm{S_A}\le \kappa_d C$, where $C=\sup_n \norm{S_{\{n\}}} <\infty$. Therefore the result holds in the case when $b=1$. Hence to prove the general case it suffices to check that $\norm{S_A}$ remains bounded when $A$ runs over all sets with $\abs{A} > \max\{2b,d\}$ and $ \abs{A} <\min A$. Given one such sets, put $m=\ceil{\abs{A}/(2b)}$, so that we can split $A$ into $2b$ sets $(A_j)_{j=1}^{2b}$ of cardinality at most $m$. Since
\[
b\abs{A_j}\le bm \le b \left( \frac{\abs{A}}{2b} +1 \right) =\frac{\abs{A}}{2} +b<\abs{A}<\min A\le \min A_j,
\]
it follows that $\norm{S_{A_j}}\le D$ for $j=1$, \dots $2b$ and so $\norm{S_A} \le \kappa_{2b} D$.
\end{proof}

\begin{proposition}\label{prop:SqSyAU}
Let $\YB=(\yy_n)_{n=1}^\infty$ be a squeeze-symmetric asymptotically unconditional basis of a quasi-Banach space $\YY$. Then $\YB$ is quasi-greedy.
\end{proposition}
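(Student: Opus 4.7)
The plan is to decompose an arbitrary greedy set $A$ of $f\in\YY$ (write $m:=|A|$, and assume $A\ne\emptyset$ since otherwise there is nothing to prove) into a \emph{shallow} part $A_1:=A\cap[1,m]$ and a \emph{deep} part $A_2:=A\setminus[1,m]$, and estimate $\norm{S_{A_1}(f)}$ and $\norm{S_{A_2}(f)}$ separately. For the deep part, asymptotic suppression unconditionality applies essentially for free: since $|A_2|\le m<\min A_2$, Definition~\ref{def:AU} directly gives $\norm{S_{A_2}(f)}\le D\norm{f}$.

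For the shallow part I would write $S_{A_1}(f)=S_{[1,m]}(f)-S_{B}(f)$ with $B:=[1,m]\setminus A$, and handle the two pieces differently. The first term is the $m$-th partial sum of the Schauder basis $\YB$, so $\norm{S_{[1,m]}(f)}\lesssim\norm{f}$ holds with constant equal to the basis constant of $\YB$. The second piece $S_B(f)$ is where squeeze-symmetry enters, through Lemma~\ref{lem:Dem+TQG}.

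To apply that lemma with $g=f$, set $t:=\min_{j\in A}|\yy_j^*(f)|$. The greediness of $A$ forces $|\yy_n^*(f)|\le t$ for every $n\notin A$, and in particular for every $n\in B$, so $\max_{s\in\NN}|\yy_s^*(S_B(f))|\le t$. On the other hand, by the definition of $t$, the set $\{n\in\NN:|\yy_n^*(f)|\ge t\}$ contains $A$ and so has cardinality at least $m$, while $|\supp(S_B(f))|\le|B|\le m$. The hypothesis of Lemma~\ref{lem:Dem+TQG} is thereby satisfied for the pair $(S_B(f),f)$, yielding $\norm{S_B(f)}\le C\norm{f}$ for a constant $C$ independent of $f$.

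Combining these estimates through the quasi-triangle inequality with modulus of concavity $\kappa=\kappa[\YY]$ produces $\norm{S_A(f)}\lesssim\norm{f}$ with a constant depending only on $D$, $C$, $\kappa$, and the basis constant of $\YB$, establishing quasi-greediness. The only delicate step I anticipate is the cardinality bookkeeping that lets Lemma~\ref{lem:Dem+TQG} be invoked with $g=f$: the greedy set $A$ itself has to serve as an explicit witness that sufficiently many indices $n$ satisfy $|\yy_n^*(f)|\ge\max_{s}|\yy_s^*(S_B(f))|$, and this witness is available precisely because $A$ is greedy, so that the coefficient threshold $t$ associated with $A$ dominates every coefficient on $B$.
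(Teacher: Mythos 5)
Your proof is correct and follows essentially the same path as the paper: both decompose $S_A(f)$ into the partial sum $S_{[1,m]}(f)$, the deep tail $S_{A\setminus[1,m]}(f)$ controlled by asymptotic suppression unconditionality, and the correction $S_{[1,m]\setminus A}(f)$ controlled by Lemma~\ref{lem:Dem+TQG} via the greedy threshold $t$. The only cosmetic difference is that you apply the quasi-triangle inequality twice (yielding a $\kappa^2$ factor) where the paper uses $\kappa_3$ directly on the three-term splitting $S_A(f)=S_{[1,m]}(f)+S_{A\setminus[1,m]}(f)-S_{[1,m]\setminus A}(f)$.
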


\begin{proof}

Given a greedy set $A$ of $f\in\XX$, put $t=\min\{\abs{\yy_n^*(f)}\colon n\in A\}$, $m=\abs{A}$, and $F=[1,m]\cap\ZZ$. Set $B=A_m\setminus A$ and $E=A\setminus F$. Since $(A\cap F, B)$ is a partition of $F$ and $(A\cap F, E)$ is a partition of $A$, we can decompose the projection onto $A$ in the form
\[
S_A(f)=S_{F}(f)+S_E(f)-S_B(f).
\]
Since $\YB$ is a Schauder basis, $\norm{S_{F}(f)} \le K\norm{f}$, where $K$ is the basis constant. We have $\abs{E} \le m$, and $ m< \min E$. So, by asymptotic suppression unconditionality, $\norm{S_{E}(f)} \le D\norm{f}$. Also, $\abs{\yy_n^*(f)}\le t$ for all $n\in B$, $\abs{\yy_n^*(f)}\ge t$ for all $n\in A$, and $\abs{B}\le \abs{A}$. Thus applying Lemma~\ref{lem:Dem+TQG} (where $S_B(f)$ plays the role of $f$ and $f$ the role of $g$) we get $\norm{S_B(f)}\le C \norm{f}$. Summing up,
\[
\norm{S_A(f)} \le \kappa_3\left(\norm{S_{F}(f)}+\norm{S_{E}(f)} +\norm{S_{B}(f)}\right)\le \kappa_3 ( K+D+ C) \norm{f}.\qedhere
\]
\end{proof}

The following two lemmas reflect the interplay between coordinate projections and averaging projections.
\begin{lemma}[\cite{AADK2019b}*{Lemma 3.11}]\label{lem: projectionlemma}
Let $(\Sym,\Vert \cdot\Vert_\Sym)$ be a subsymmetric sequence space and $\sigma=(\sigma_n)_{n=1}^\infty$ be an ordered partition of $\NN$. Set $N_n=\abs{\sigma_n}$, $\Lambda^*[\Sym]=(\Lambda_m^*)_{m=1}^\infty$, and $\VB^*_{\Sym,\sigma}=(\vv_n^*)_{n=1}^\infty$. Then there is a constant $C$ such that
\[
\abs{\vv_n^*(S_A(f))}\le C \frac{\Lambda^*_{m}}{\Lambda^*_{N_n}} \norm{ S_{\sigma_n}(f) }_\Sym
\]
for all $m\in\NN$, all $n\in\NN$, all $f\in c_{00}$, and all $A\subseteq \sigma_n$ with $\abs{A}\le m$.
\end{lemma}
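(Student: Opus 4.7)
The plan is to reduce the asserted inequality to the textbook duality between a Banach sequence space and its dual via a one-line unfolding of the definition of $\vv_n^*$. First, starting from the formula
$\vv_n^* = (\Lambda^*_{N_n})^{-1}\Ind_{\sigma_n}[\EB^*,\Sym^*]$
given in \eqref{BOSforAP} and using the hypothesis $A\subseteq\sigma_n$, a direct computation gives
\[
\vv_n^*(S_A(f)) \;=\; \frac{1}{\Lambda^*_{N_n}}\sum_{j\in A}\ee_j^*(f) \;=\; \frac{1}{\Lambda^*_{N_n}}\bigl\langle \Ind_A[\EB^*,\Sym^*],\, S_{\sigma_n}(f)\bigr\rangle,
\]
because the support condition $A\subseteq\sigma_n$ allows the replacement of $f$ by $S_{\sigma_n}(f)$ inside the pairing without changing its value.

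Next, I would apply the Hölder-type inequality for the $\Sym$--$\Sym^*$ duality to bound the pairing by $\norm{\Ind_A}_{\Sym^*}\,\norm{S_{\sigma_n}(f)}_\Sym$. Subsymmetry of $\Sym$ transfers to $\Sym^*$, so $\norm{\Ind_A}_{\Sym^*}$ depends only on $\abs{A}$ and equals the fundamental function $\Lambda_{\abs{A}}[\Sym^*]$; since the fundamental function of a Banach sequence space is non-decreasing and $\abs{A}\le m$, this is in turn bounded by $\Lambda_m[\Sym^*]$. The final ingredient is the classical identity $\Lambda_m[\Sym^*]\approx m/\Lambda_m[\Sym] = \Lambda^*_m[\Sym]$: the lower bound $\Lambda_m[\Sym^*]\ge \Lambda^*_m[\Sym]$ is automatic by testing $\Ind_{[1,m]}\in\Sym^*$ against $\Ind_{[1,m]}/\Lambda_m[\Sym]\in\Sym$, while the reverse inequality (with a universal constant) is the standard averaging/truncation estimate that is part of the Lorentz-space background used elsewhere in the paper (cf.\ \cite{AABW2021}*{\S 9}). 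Chaining the three estimates produces the claimed bound with an absolute constant $C$.

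The main obstacle is the last step: in the fully symmetric case $\Lambda_m[\Sym^*]=\Lambda^*_m[\Sym]$ falls out of permutation averaging, but in the merely subsymmetric setting full permutation invariance is missing and one must invoke either an Abel-summation argument or the Lorentz-space background. Once that duality identity is in hand, the lemma is essentially a one-line unfolding of \eqref{BOSforAP} together with the textbook Hölder inequality.
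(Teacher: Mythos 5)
Your proof is correct and matches the standard argument (the paper merely cites \cite{AADK2019b}*{Lemma 3.11} rather than reproducing the proof). The chain ``unfold $\vv_n^*$, replace $f$ by $S_{\sigma_n}(f)$ inside the pairing, apply $\Sym$--$\Sym^*$ duality, use monotonicity of $\Lambda[\Sym^*]$, and close with $\Lambda_m[\Sym^*]\lesssim\Lambda^*_m[\Sym]$'' is exactly the intended reasoning; the final inequality, which is the only non-trivial ingredient, is an immediate consequence of the bound $\norm{P_\sigma}_{\Sym\to\Sym}\le 2$ already recorded in the paper, so in fact $C=2$ works.
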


\begin{lemma}\label{lem:Axnormestimate}
Let $\Sym$ be a locally convex subsymmetric sequence space and $\sigma=(\sigma_n)_{n=1}^\infty$ be an ordered partition of $\NN$. Then:
\begin{enumerate}[label=(\roman*),leftmargin=*,widest=iii]
\item\label{it:DilworthA} $\norm{P_\sigma(S_A(f))}_\Sym \le 4 \norm{f}_\Sym$ for all $f\in Q_\sigma(\Sym)$.
\item\label{it:DilworthB}
If $\Sym$ satisfies an upper $s$-estimate for some $s\ge 1$, there is a constant $C$ such that
\[
\norm{Q_\sigma(S_A(f))}_\Sym \le 5 \norm{Q_\sigma(f)}_\Sym+ C \left( \sum_{n= 1}^\infty \frac{\Lambda^s_{m_n}} {\Lambda^s_{N_n}} \abs{\vv_n^*(f)}^s\right)^{1/s},
\]
for all $A \subseteq\NN $ and $f \in\Sym $, where $m_n = \abs{A \cap \sigma_n}$.
\end{enumerate}
\end{lemma}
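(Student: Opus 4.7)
The plan is to handle part (i) as an almost-immediate consequence of subsymmetry, and then feed it into the proof of (ii) to get the precise coefficient~$5$.

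For part (i), taking $\phi=\mathrm{id}$ in the definition of subsymmetry yields $\norm{\sum_n\varepsilon_na_n\ee_n}_\Sym=\norm{\sum_na_n\ee_n}_\Sym$ for every choice of signs, so the unit vector system of $\Sym$ is $1$-unconditional. Writing the coordinate projection as $S_A=\frac{1}{2}(\Id+T_\chi)$ with $T_\chi$ the sign multiplier $\chi_n=1$ for $n\in A$ and $\chi_n=-1$ otherwise, I obtain $\norm{S_A}_{\Sym\to\Sym}\le 1$. Combined with the cited bound $\norm{P_\sigma}_{\Sym\to\Sym}\le 2$, this gives
\[
\norm{P_\sigma(S_A(f))}_\Sym\le 2\norm{S_A(f)}_\Sym\le 2\norm{f}_\Sym
\]
for every $f\in\Sym$ (the hypothesis $f\in Q_\sigma(\Sym)$ is never actually used; it is kept in the statement only because this is the form in which (i) is invoked inside (ii)).

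For part (ii), decompose $f=P_\sigma(f)+Q_\sigma(f)$. By linearity of $S_A$ and $Q_\sigma$,
\[
Q_\sigma(S_A(f))=Q_\sigma(S_A(Q_\sigma(f)))+Q_\sigma(S_A(P_\sigma(f))).
\]
For the first summand, apply (i) to $Q_\sigma(f)\in Q_\sigma(\Sym)$ to obtain $\norm{P_\sigma(S_A(Q_\sigma(f)))}_\Sym\le 4\norm{Q_\sigma(f)}_\Sym$, and use $1$-unconditionality to get $\norm{S_A(Q_\sigma(f))}_\Sym\le\norm{Q_\sigma(f)}_\Sym$; the triangle inequality then yields
\[
\norm{Q_\sigma(S_A(Q_\sigma(f)))}_\Sym\le(1+4)\norm{Q_\sigma(f)}_\Sym=5\norm{Q_\sigma(f)}_\Sym,
\]
which is precisely the leading term of the target inequality.

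For the second summand, start from $P_\sigma(f)=\sum_n\Av(f,\sigma_n)\Ind_{\sigma_n}$, so that
\[
S_A(P_\sigma(f))=\sum_n\Av(f,\sigma_n)\,\Ind_{A\cap\sigma_n}
\]
is a sum of vectors with pairwise disjoint supports (one per block). Reading off $\Av(f,\sigma_n)=\vv_n^*(f)/\Lambda_{N_n}$ from \eqref{BOSforAP} (using $\Lambda_{N_n}^*\Lambda_{N_n}=N_n$), and $\norm{\Ind_{A\cap\sigma_n}}_\Sym=\Lambda_{m_n}$, the upper $s$-estimate of $\Sym$ gives
\[
\norm{S_A(P_\sigma(f))}_\Sym\lesssim\Bigl(\sum_n\frac{\Lambda_{m_n}^s}{\Lambda_{N_n}^s}\abs{\vv_n^*(f)}^s\Bigr)^{1/s};
\]
applying $Q_\sigma$ costs at most a factor $\norm{Q_\sigma}_{\Sym\to\Sym}\le 3$, so this summand is absorbed into the $C(\cdots)^{1/s}$ term of the statement. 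Summing the two estimates completes (ii). The only step that is not completely mechanical is identifying $\Av(f,\sigma_n)$ with $\vv_n^*(f)/\Lambda_{N_n}$ through \eqref{BOSforAP}; beyond that, the argument is a bookkeeping exercise with $1$-unconditionality, the boundedness of $P_\sigma$, and the upper $s$-estimate applied to disjointly supported vectors, so I do not foresee any genuine obstacle.
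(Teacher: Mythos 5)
Your proof is correct and, modulo the fact that the paper outsources the work to \cite{AADK2019b}*{Lemma 3.14}, follows essentially the same strategy: split $f$ into $Q_\sigma(f)+P_\sigma(f)$, handle the first piece by $1$-unconditionality together with the boundedness of $P_\sigma$ and $Q_\sigma$, and handle the second piece by recognizing $S_A(P_\sigma f)=\sum_n\Av(f,\sigma_n)\,\Ind_{A\cap\sigma_n}$ as a disjointly supported sum to which the upper $s$-estimate applies. Your bookkeeping $\vv_n^*(f)=\Lambda_{N_n}\Av(f,\sigma_n)$ (using $\Lambda^*_{N_n}\Lambda_{N_n}=N_n$) and $\norm{\Ind_{A\cap\sigma_n}}_\Sym=\Lambda_{m_n}$ is exactly what converts the $s$-estimate into the claimed series, and the factor $\norm{Q_\sigma}\le 3$ is correctly absorbed into the constant $C$. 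Two observations, both accurate and worth keeping: the hypothesis $f\in Q_\sigma(\Sym)$ in (i) is indeed superfluous (since $\norm{S_A}\le 1$ by $1$-unconditionality and $\norm{P_\sigma}\le 2$ hold on all of $\Sym$, giving the sharper bound $2\norm{f}_\Sym$), and the decomposition $Q_\sigma S_A=Q_\sigma S_A Q_\sigma+Q_\sigma S_A P_\sigma$ with the identity $Q_\sigma S_A Q_\sigma(f)=S_A Q_\sigma(f)-P_\sigma S_A Q_\sigma(f)$ explains precisely where the coefficient $5=1+4$ comes from.
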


\begin{proof}
The proof of \cite{AADK2019b}*{Proof of Lemma~3.14} gives both $\ref{it:DilworthA}$ and also that $\norm{Q_\sigma(S_A(f))}_\Sym \le 5 \norm{f}_\Sym$ for all $f\in Q_\sigma(f)$. It remains to prove $\ref{it:DilworthB}$ when $f\in Q_\sigma(\Sym)$. To that end, we go over the lines of \cite{AADK2019b}*{Lemma~3.14}, replacing the triangle law with the upper $s$-estimate.
\end{proof}

\begin{lemma}\label{lem:newregular}
Let $(N_n)_{n=1}^\infty$ be a sequence of positive integers such that $M_n:=\sum_{k=1}^n N_k \lesssim N_{n+1}$ for $n\in\NN$, let $(\Gamma_n)_{n=1}^\infty$ be a sequence of positive scalars having the LRP, and let $0<q\le 1$. Assume that $(\Gamma_n)_{n=1}^\infty$ and $(n/\Gamma_n)_{n=1}^\infty$ are non-decreasing. Then, there is a constant $C$ such that
\[
\sum_{n=r}^\infty \left( \frac {\Gamma_{m_n}} {\Gamma_{N_n}}\right)^q\le C^q
\]
for all $r\in\NN$ and all sequences $(m_n)_{n=r}^\infty$ in $\NN$ with $m_n\le M_r$ for all $n\ge r$.
\end{lemma}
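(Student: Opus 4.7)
The plan is to mimic the strategy used in Lemma~\ref{lem:regularityrstimate}, adjusting the estimates to accommodate the additional flexibility that the free parameter $m_n$ introduces. First, I would invoke \cite{AlbiacAnsorena2016}*{Lemma~2.12} to extract $\alpha\in(0,1)$ and $C_1>0$ such that $\Gamma_a/\Gamma_b\le C_1(a/b)^\alpha$ whenever $a\le b$. Second, I would fix $C_2$ with $M_n\le C_2 N_{n+1}$ for all $n\in\NN$. Then $M_n=M_{n-1}+N_n\ge (1+1/C_2)M_{n-1}$, so with $\rho:=1+1/C_2>1$ one gets $M_n\ge \rho^{n-r}M_r$ for $n\ge r$. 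Equivalently, $N_n\ge M_{n-1}/C_2\ge \rho^{n-1-r}M_r/C_2$ for $n>r$, i.e., $M_r/N_n\le C_2\rho^{-(n-1-r)}$.

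Next I would bound the summand for each $n\ge r$. For $n=r$, since $M_r\le(C_2+1)N_r$, the hypothesis that $(k/\Gamma_k)_{k=1}^\infty$ is nondecreasing gives $\Gamma_{m_r}/\Gamma_{N_r}\le \max(1,m_r/N_r)\le C_2+1$. For $n>r$ I would split into two subcases. If $m_n\le N_n$, the $\alpha$-regularity produces
\[
\frac{\Gamma_{m_n}}{\Gamma_{N_n}}\le C_1\left(\frac{m_n}{N_n}\right)^\alpha \le C_1\left(\frac{M_r}{N_n}\right)^\alpha\le C_1 C_2^\alpha\,\rho^{-\alpha(n-1-r)}.
\]
If instead $m_n>N_n$, then $m_n\le M_r\le M_{n-1}\le C_2 N_n$ together with the nondecreasing character of $(k/\Gamma_k)$ give $\Gamma_{m_n}/\Gamma_{N_n}\le m_n/N_n\le C_2$; but the condition $N_n<m_n\le M_r$ combined with the geometric growth $N_n\ge \rho^{n-1-r}M_r/C_2$ forces $\rho^{n-1-r}<C_2$, so this subcase concerns at most $\lfloor \log C_2/\log\rho\rfloor+1$ indices, a quantity depending only on $C_2$ and $\rho$.

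Summing over $n$, the long-range contribution (where $m_n\le N_n$) is dominated by a geometric series with ratio $\rho^{-\alpha q}<1$, hence bounded by $(1-\rho^{-\alpha q})^{-1}C_1^q C_2^{\alpha q}$, while the short-range contribution (including $n=r$ and the finitely many indices where $m_n>N_n$) is a uniformly bounded sum of constants. Both bounds are independent of $r$ and of the choice of the sequence $(m_n)$, which yields the claimed constant $C$.

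The main obstacle is really just the book-keeping around the transition between the two regimes, since allowing $m_n$ to vary with $M_r$ (rather than having $m_n=N_n$ as in Lemma~\ref{lem:regularityrstimate}) opens the possibility that $m_n>N_n$ for a few small indices. However, this exceptional set is bounded in size by the same geometric parameters that govern the rest of the estimate, so the additional freedom costs only a constant factor and introduces no genuinely new difficulty beyond Lemma~\ref{lem:regularityrstimate}.
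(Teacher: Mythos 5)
Your argument is correct, but it takes a somewhat more laborious route than the paper. The paper's proof is a three-line reduction: since $m_n\le M_r\le bN_r$ for a fixed integer $b$, the monotonicity of $\Gamma$ gives $\Gamma_{m_n}\le\Gamma_{bN_r}$ uniformly in $n\ge r$, and the monotonicity of $(n/\Gamma_n)$ gives $\Gamma_{bN_r}\le b\Gamma_{N_r}$; thus the whole sum is dominated by $b^q\sum_{n\ge r}(\Gamma_{N_r}/\Gamma_{N_n})^q$, which is exactly the second estimate of Lemma~\ref{lem:regularityrstimate}. You instead re-derive the geometric decay from scratch via \cite{AlbiacAnsorena2016}*{Lemma~2.12}, splitting indices according to whether $m_n\le N_n$ or $m_n>N_n$, and bounding the number of exceptional indices in the second regime. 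This works --- every step in your case analysis checks out, including the count of exceptional indices and the treatment of $n=r$ --- but it amounts to unwinding the content of Lemma~\ref{lem:regularityrstimate} rather than invoking it. The one observation you miss is that the bound $\Gamma_{m_n}\le b\Gamma_{N_r}$ is \emph{uniform} over all $n\ge r$, which eliminates the free parameter $m_n$ at a single stroke and makes the case split unnecessary. Both approaches give the conclusion with a constant depending only on the regularity data; the paper's is just tidier because it reuses the earlier lemma as a black box.
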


\begin{proof}
Notice that $\Gamma_{bm} \le b \Gamma_m$ for all $b$ and $m\in\NN$. Besides, our hypothesis gives an integer $b$ such that $M_r\le b N_r$ for all $r\in\NN$. Hence,
\[
\sum_{n=r}^\infty \left( \frac {\Gamma_{m_n}} {\Gamma_{N_n}}\right)^q\le \sum_{n=r}^\infty \left( \frac {\Gamma_{b N_r}} {\Gamma_{N_n}}\right)^q
\le b^q \sum_{n=r}^\infty \left( \frac {\Gamma_{N_r}} {\Gamma_{N_n}}\right)^q.
\]
An appeal to Lemma~\ref{lem:regularityrstimate} puts an end to the proof.
\end{proof}

\begin{proposition}\label{prop:projectionnormestimate}
Assume that all the hypotheses of Theorem~\ref{thm:FirstProperties} hold, that $\Lambda[\Sym]$ has both the LRP and the URP, and that $M_r \lesssim N_{r+1}$ for $r\in\NN$. Then the unit vector system of $\YY=\YY[\XB,\XX,\sigma]$ is asymptotically suppression unconditional.
\end{proposition}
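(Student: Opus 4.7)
The plan is to verify the defining condition of asymptotic suppression unconditionality directly, by separately controlling the two summands in the defining quasi-norm $\norm{\cdot}_{\XB,\Sym,\sigma}=\norm{Q_\sigma\cdot}_\Sym+\norm{H_{\XB,\XX,\sigma}(\cdot)}_\XX$ of $\YY$. By the Aoki--Rolewicz theorem I may assume $\XX$ is $p$-Banach for some $0<p\le 1$. Given $A\subseteq\NN$ with $\abs{A}<\min A$, set $m=\abs{A}$ and choose the unique $r$ with $M_{r-1}<m\le M_r$. The inequality $\min A\ge m+1>M_{r-1}$ forces $A\subseteq\bigcup_{n\ge r}\sigma_n$, so $m_n:=\abs{A\cap\sigma_n}$ vanishes for $n<r$ and satisfies $m_n\le\min\{m,N_n\}\le M_r$ for $n\ge r$. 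This is precisely the regime for Lemma~\ref{lem:newregular}, which will be invoked twice: once with $\Gamma=\Lambda[\Sym]$ (LRP by hypothesis) for the $\Sym$-summand, and once with $\Gamma=\Lambda^*[\Sym]$ (which inherits LRP from the URP of $\Lambda[\Sym]$) for the $\XX$-summand.

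To bound the $\Sym$-summand I would invoke Lemma~\ref{lem:Axnormestimate}\ref{it:DilworthB} with $s=1$ (the trivial upper $1$-estimate), producing
\[
\norm{Q_\sigma(S_A(f))}_\Sym\le 5\norm{Q_\sigma(f)}_\Sym+C\sum_{n\ge r}\frac{\Lambda_{m_n}}{\Lambda_{N_n}}\abs{\vv_n^*(f)}.
\]
Since $\Lambda_{m_n}/\Lambda_{N_n}\in[0,1]$, pulling $\sup_n\abs{\vv_n^*(f)}$ out of the sum and applying Lemma~\ref{lem:newregular} with $q=1$ bounds the sum by a universal constant times $\sup_n\abs{\vv_n^*(f)}$. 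The identity $\vv_n^*(f)=\xx_n^*(H_{\XB,\XX,\sigma}(f))$, combined with the uniform boundedness of the coordinate functionals of $\XB$ on $\XX$, then yields $\sup_n\abs{\vv_n^*(f)}\lesssim\norm{H_{\XB,\XX,\sigma}(f)}_\XX\le\norm{f}_\YY$, closing this step.

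For the $\XX$-summand, the $p$-Banach inequality gives
\[
\norm{H_{\XB,\XX,\sigma}(S_A(f))}_\XX^p\le\Bigl(\sup_n\norm{\xx_n}_\XX\Bigr)^p\sum_{n\ge r}\abs{\vv_n^*(S_A(f))}^p.
\]
Because $\vv_n^*$ is supported on $\sigma_n$, we have $\vv_n^*(S_A(f))=\vv_n^*(S_{A\cap\sigma_n}(f))$, so Lemma~\ref{lem: projectionlemma} (applied with $A\cap\sigma_n$ of size $m_n$) provides $\abs{\vv_n^*(S_A(f))}\le C'(\Lambda^*_{m_n}/\Lambda^*_{N_n})\norm{S_{\sigma_n}(f)}_\Sym$. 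I would then decompose $f=P_\sigma f+Q_\sigma f$; the identity $S_{\sigma_n}(P_\sigma f)=\vv_n^*(f)\vv_n$ with $\norm{\vv_n}_\Sym=1$, together with the uniform boundedness of the block projections $S_{\sigma_n}$ on $\Sym$, gives $\norm{S_{\sigma_n}(f)}_\Sym\lesssim\abs{\vv_n^*(f)}+\norm{Q_\sigma(f)}_\Sym$. Lemma~\ref{lem:newregular} with $\Gamma=\Lambda^*$ and $q=p$ delivers $\sum_{n\ge r}(\Lambda^*_{m_n}/\Lambda^*_{N_n})^p\lesssim 1$, and the $p$-subadditivity $(a+b)^p\le a^p+b^p$ together with the bounds $\abs{\vv_n^*(f)},\,\norm{Q_\sigma(f)}_\Sym\lesssim\norm{f}_\YY$ closes the estimate.

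The principal obstacle is coordinating two regularity estimates (on $\Lambda$ and on $\Lambda^*$, with different exponents $1$ and $p$) while respecting the $p$-Banach structure of $\XX$ rather than a Banach one. The hypothesis $\abs{A}<\min A$ is used precisely to ensure $A\cap\sigma_n=\emptyset$ for $n<r$, so both sums start at a common index $r$ and decay via Lemma~\ref{lem:newregular}; the growth hypothesis $M_r\lesssim N_{r+1}$ is the ingredient that makes that lemma applicable; and the URP-to-LRP transfer on $\Lambda$ is what lets the $\ell_p$-type sum on $(\Lambda^*_{m_n}/\Lambda^*_{N_n})^p$ with $p\le 1$ converge.
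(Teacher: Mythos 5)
Your proof is correct and follows essentially the same route as the paper: after the Aoki--Rolewicz reduction to a $p$-Banach $\XX$, you split the quasi-norm into the $\Sym$-part and the $\XX$-part, control the $\Sym$-part via Lemma~\ref{lem:Axnormestimate}\ref{it:DilworthB} with $s=1$ plus Lemma~\ref{lem:newregular} applied to $\Lambda[\Sym]$ with $q=1$, and control the $\XX$-part via Lemma~\ref{lem: projectionlemma} plus Lemma~\ref{lem:newregular} applied to $\Lambda^*[\Sym]$ with $q=p$ --- exactly the paper's two invocations of the regularity lemma. The observation that $\abs{A}<\min A$ together with $M_{r-1}<\abs{A}\le M_r$ forces $A\cap\sigma_n=\emptyset$ for $n<r$ is the same pivotal step that the paper records after fixing $r$.

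There are two small cosmetic differences worth noting, neither of which affects correctness. First, you verify the condition $\abs{A}<\min A$ of Definition~\ref{def:AU} directly, whereas the paper deliberately routes through the relaxed criterion of Lemma~\ref{lem:CASU} (allowing $b\abs{A}<\min A$ with $b\ge\sup_r M_r/N_r$); since your argument already works with $b=1$, that relaxation buys nothing here, so your choice is arguably cleaner. Second, to bound $\norm{S_{\sigma_n}(f)}_\Sym$ you re-derive the estimate from scratch by splitting $f=P_\sigma f+Q_\sigma f$ and using $\norm{\vv_n}_\Sym=1$ together with the uniform bound on block projections, while the paper simply cites the inequality \eqref{eq:B} obtained directly from \eqref{eq:A}; the content is the same. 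In short, you identified the right decomposition, the right auxiliary lemmas, and the right way to coordinate the two regularity estimates, so this is a faithful blind reconstruction of the paper's argument.
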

\begin{proof}
Assume without loss of generality that $\XX$ is a $p$-Banach space. Set $c=\sup_n\norm{\xx_n}$ and $c_*= \sup_n \norm{\xx_n^*}$. Notice that both $\Lambda[\Sym]=(\Lambda_m)_{m=1}^\infty$ and $\Lambda[\Sym^*]=(\Lambda_m^*)_{m=1}^\infty$ have the LRP. Let $C$ (resp.\ $C_*$) be the constant obtained by feeding Lemma~\ref{lem:newregular} with $\Gamma_m=\Lambda_m$ and $q=1$ (resp.\ $\Gamma_m=\Lambda_m^*$ and $q=p$). By \eqref{eq:A}, there is a constant $C_0$ such that
\begin{equation}\label{eq:B}
\norm{ S_{\sigma_n}(f)}_\Sym\le C_0 \norm{f}_{\XB,\Sym,\sigma}, \quad n\in\NN,\; f\in \YY.
\end{equation}
With an eye to applying Lemma~\ref{lem:CASU}, choose $b\in\NN$ such that $b \ge M_r/N_r$ for all $r\in\NN$. Pick $m\in\NN$ and $A\subset \NN$ such that $\abs{A}\le m$ and $bm<\min{A}$. Let $r\in\NN$ be such that $M_{r-1} <m\le M_r$. We have $\min{A}>M_{r-1}$. Hence, $A_n:=A\cap \sigma_n= \emptyset$ for all $1 \le n\le r-1$. Set $m_n=\abs{A_n}$ for $n\ge r$. Fix $f\in \YY$. By Lemma~\ref{lem: projectionlemma} and inequality \eqref{eq:B},
\begin{align*}
\norm{H_{\XB,\Sym,\sigma}(S_A(f))}_\XX^p
&\le c^p\sum_{n=r}^\infty \abs{\vv_n^*(S_{A_n}(f))}^p\\
& \le 2^p c^p \sum_{n=r}^\infty \left( \frac {\Lambda^*_{m_n}} {\Lambda^*_{N_n}}\right)^p \norm{ S_{\sigma_n}(f)}^p_\Sym\\
&\le 2^p c^p C_0^p \norm{f}_{\XB,\Sym,\sigma}^p \sum_{n=r}^\infty \left( \frac {\Lambda^*_{m_n}} {\Lambda^*_{N_n}}\right)^p\\
&\le 2^p c^p C_0^p C_*^p \norm{f}_{\XB,\Sym,\sigma}^p.
\end{align*}
Now, if $C_1$ is the constant provided by Lemma~\ref{lem:Axnormestimate}\ref{it:DilworthB} in the case $s=1$,
\begin{align*}
\norm{Q_\sigma(S_A(f))}_\Sym
&\le\max\left\{ 5 + c_* C_1 \sum_{n= r}^\infty \frac{\Lambda_{m_n}} {\Lambda_{N_n}} \right\} \norm{f}_{\XB,\XX,\sigma}\\
&\le\max\left\{ 5 + 2 c_* C_1 \right\} \norm{f}_{\XB,\XX,\sigma}.
\end{align*}
Combining we obtain $\norm{S_A(f)}_{\XB,\XX,\sigma}\le C_2 \norm{f}_{\XB,\XX,\sigma}$, where
\[
C_2= 2^p c^p C_0^p C_*^p + \max\left\{ 5 + c_* C C_1 \right\}.\qedhere
\]
\end{proof}

\begin{theorem}\label{thm:QGTheorem}
Assume that all the hypotheses of Theorem~\ref{thm:FirstProperties} hold, that $\Lambda[\Sym]$ has both the LRP and the URP, and that $M_r \lesssim N_{r+1}$ for $r\in\NN$. Then the unit vector system is an almost greedy basis for $\YY[\XB,\Sym,\sigma]$ with fundamental function equivalent to $\Lambda[\Sym]$.
\end{theorem}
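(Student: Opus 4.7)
The plan is to assemble the conclusion directly from the three preceding results in this subsection, since each hypothesis of Theorem~\ref{thm:QGTheorem} has been tailored to trigger one of them. Concretely, almost greediness will be obtained by verifying the two ingredients of the Dilworth--Kalton--Kutzarova characterization: democracy and quasi-greediness. The fundamental function estimate will be a byproduct of the democracy step.

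First I would invoke Theorem~\ref{thm:SqSy}: its hypotheses coincide exactly with those of the present theorem, so the unit vector system $\EB$ of $\YY=\YY[\XB,\Sym,\sigma]$ is squeeze-symmetric, in particular democratic, and the fundamental function $\sudf[\EB,\YY]$ is equivalent to $\Lambda[\Sym]$. This already takes care of both the democracy requirement and the final assertion of the theorem about the fundamental function. Next I would apply Proposition~\ref{prop:projectionnormestimate}, whose hypotheses again match ours verbatim, to conclude that $\EB$ is asymptotically suppression unconditional in the sense of Definition~\ref{def:AU}.

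Having squeeze-symmetry and asymptotic suppression unconditionality in hand, Proposition~\ref{prop:SqSyAU} applies and yields that $\EB$ is quasi-greedy. Combining quasi-greediness with the democracy obtained from Theorem~\ref{thm:SqSy}, the characterization of almost greedy bases (\cite{AABW2021}*{Theorem 6.3}, which is valid for quasi-Banach spaces and is invoked in the text right before Lemma~\ref{lem:Dem+TQG}) delivers that $\EB$ is almost greedy. Since this characterization was explicitly designed for the quasi-Banach setting, no additional local convexity assumption on $\YY$ is needed, which is precisely the point of this extension of the DKK method.

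There is no substantive obstacle in the proof itself, as the content has been pushed into Theorem~\ref{thm:SqSy}, Proposition~\ref{prop:projectionnormestimate}, and Proposition~\ref{prop:SqSyAU}; the only thing to check is that the three hypotheses of Theorem~\ref{thm:QGTheorem} are sufficient to activate all three auxiliary results, which is immediate by inspection. Thus the write-up should be essentially a short chain of citations, perhaps a single short paragraph in the final document, emphasizing that the URP of $\Lambda[\Sym]$ (equivalently, the LRP of $\Lambda^*[\Sym]$) is what powered Proposition~\ref{prop:projectionnormestimate} via Lemma~\ref{lem:newregular}, while the LRP of $\Lambda[\Sym]$ together with the growth condition $M_r \lesssim N_{r+1}$ powered the democracy bound in Theorem~\ref{thm:SqSy}.
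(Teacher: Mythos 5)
Your proposal is correct and follows exactly the paper's own proof, which simply states that the result follows by combining Theorem~\ref{thm:SqSy} with Propositions~\ref{prop:SqSyAU} and~\ref{prop:projectionnormestimate}; you have merely made explicit the intermediate logical steps (squeeze-symmetry gives democracy, asymptotic suppression unconditionality plus squeeze-symmetry gives quasi-greediness, and democracy plus quasi-greediness gives almost greediness via the characterization in~\cite{AABW2021}).
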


\begin{proof}
It follows by combining Theorem~\ref{thm:SqSy} with Propositions~\ref{prop:SqSyAU} and \ref{prop:projectionnormestimate}.
\end{proof}

We close this section by noticing that, since we intend to apply Theorem~\ref{thm:QGTheorem} in the case when $\Sym$ is a superreflexive space, imposing regularity on $\Lambda[\Sym]$ is not a restriction.

\begin{proposition}\label{prop:RadLURP}
Let $\Sym$ be a superreflexive subsymmetric sequence space. Then $\Lambda[\Sym]$ has the URP and the LRP.
\end{proposition}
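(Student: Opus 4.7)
The plan is to argue by contraposition using the type and cotype of $\Sym$. Since $\Sym$ is superreflexive, every ultrapower of $\Sym$ is reflexive; consequently $\Sym$ contains uniformly neither $\ell_1^n$'s nor $\ell_\infty^n$'s, as either would produce a copy of $\ell_1$ or of $c_0$ inside an ultrapower. By the Maurey--Pisier theorem this is equivalent to $\Sym$ having both nontrivial type $p>1$, with constant $T_p$, and nontrivial cotype $q<\infty$, with constant $C_q$. I will contradict one of these whenever a regularity property of $\Lambda=\Lambda[\Sym]$ fails, by testing the Rademacher inequalities against normalized blocks of the unit vector basis.

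Suppose URP fails, so that for each $b\in\NN$ one can select $m=m_b\in\NN$ with $\Lambda_{bm}>(b/2)\Lambda_m$. Choose pairwise disjoint sets $A_1,\dots,A_b\subseteq\NN$, each of cardinality $m$, and put
\[
\bm{y}_j=\Lambda_m^{-1}\Ind_{A_j}[\EB,\Sym],\qquad 1\le j\le b.
\]
Subsymmetry, which in the definition adopted here incorporates invariance under sign changes, gives $\|\bm{y}_j\|_\Sym=1$ and $\Big\|\sum_{j=1}^b \varepsilon_j\bm{y}_j\Big\|_\Sym=\Lambda_{bm}/\Lambda_m$ for every $\varepsilon=(\varepsilon_j)_{j=1}^b\in\EE^b$. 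In particular the average of these norms over uniformly random signs also equals $\Lambda_{bm}/\Lambda_m$, so the type-$p$ inequality applied to $\bm{y}_1,\dots,\bm{y}_b$ yields
\[
\frac{b}{2}<\frac{\Lambda_{bm}}{\Lambda_m}\le T_p\Big(\sum_{j=1}^b \|\bm{y}_j\|_\Sym^p\Big)^{1/p}=T_p\,b^{1/p}.
\]
This forces $b^{1-1/p}<2T_p$ for every $b\in\NN$, which is impossible since $p>1$.

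The LRP case is perfectly dual. Assume LRP fails, so that for each $b$ there is $m=m_b$ with $\Lambda_{bm}<2\Lambda_m$; the very same blocks $\bm{y}_j$ then have $\|\bm{y}_j\|_\Sym=1$ and constant Rademacher sum of norm $\Lambda_{bm}/\Lambda_m<2$, and the cotype-$q$ inequality gives
\[
b^{1/q}=\Big(\sum_{j=1}^b \|\bm{y}_j\|_\Sym^q\Big)^{1/q}\le C_q\cdot\frac{\Lambda_{bm}}{\Lambda_m}<2C_q,
\]
contradicting $q<\infty$ for $b$ large.

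The only delicate ingredient is the claim ``superreflexive implies both nontrivial type and nontrivial cotype.'' Nontrivial type follows because superreflexive spaces are $B$-convex (their ultrapowers are reflexive and so cannot contain $\ell_1$) combined with Pisier's theorem identifying $B$-convexity with nontrivial type; nontrivial cotype then follows by James's self-duality of superreflexivity applied to $\Sym^*$ together with the standard duality linking type of $\Sym^*$ to cotype of $\Sym$. Once type and cotype are in hand, the rest of the proof reduces to the short calculations above.
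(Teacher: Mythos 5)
Your proof is correct, but it takes a genuinely different and more self-contained route than the paper. The paper's argument is a two-line reduction: since $\Sym$ is subsymmetric its unit vector system is in particular almost greedy, and since $\Sym$ is superreflexive it has nontrivial type; the conclusion is then read off from \cite{DKKT2003}*{Proposition 4.1}. You instead unpack the underlying mechanism explicitly. Your key observation is that, because the unit vector system of $\Sym$ is $1$-subsymmetric, the normalized constant-coefficient blocks $\Lambda_m^{-1}\Ind_{A_j}[\EB,\Sym]$ have Rademacher averages that are literally constant and equal to $\Lambda_{bm}/\Lambda_m$, so the type-$p$ and cotype-$q$ inequalities collapse to the sharp two-sided bound
\[
C_q^{-1}\,b^{1/q}\ \le\ \frac{\Lambda_{bm}}{\Lambda_m}\ \le\ T_p\,b^{1/p},
\]
from which URP (via $p>1$) and LRP (via $q<\infty$) follow at once by choosing $b$ large. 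In effect you re-derive, for the subsymmetric case, the content of the proposition the paper cites, making the implications \emph{type $\Rightarrow$ URP} and \emph{cotype $\Rightarrow$ LRP} transparent. The tradeoff is length versus transparency: the paper's proof is shorter but relies on an external black box, while yours is fully self-contained and shows exactly where superreflexivity enters. One minor remark: your appeal to James's self-duality plus the type/cotype duality to obtain finite cotype is correct but slightly roundabout; Enflo's theorem that superreflexive spaces are uniformly convexifiable gives nontrivial type and finite cotype in one stroke, which would streamline that paragraph.
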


\begin{proof}
The unit vector system of $\Sym$ is almost greedy, and $\Sym$ has nontrivial type. Hence the result follows from
\cite{DKKT2003}*{Proposition 4.1}.
\end{proof}

\section{Estimates for the Lebesgue parameters of bases resulting from DKK-method}\label{sec:Lebesgue}\noindent
From the point of view of sparse approximation with respect to the TGA in Banach spaces, the most important numerical information one can get from a basis $\XB$ is obtained through
the sequence $(\leb_m)_{m=1}^\infty$ of its Lebesgue parameters.
For $m\in\NN$, the \emph{$m$th Lebesgue parameter} $\leb_m$ is defined as the smallest constant $C$ such that
\[
\norm{f-S_A(f)}\le C \norm{f-g}
\]
for all greedy sets $A$ of $f$ with $\abs{A}= m$ and all linear combinations $g$ of $m$ elements of $\XB$. Roughly speaking, the sequence $(\leb_m)_{m=1}^\infty$ measures how far $\XB$ is from being greedy.

The growth of the Lebesgue parameters is linearly determined by the combination of the unconditionality parameters $(\kk_m)_{m=1}^\infty$, defined as
\[
\kk_m=\kk_m[\XB,\XX]=\sup\left\{ \norm{ S_A[\XB,\XX]} \colon A\subseteq\NN, \ \abs{A} \le m \right\}, \quad m\in\NN,
\]
and which quantify the conditionality of $\XB$, and the squeeze-symmetric parameters, which quantify
how far $\XB$ is from being squeeze symmetric
(see \cite{AAB2021}*{Theorem 1.5}). In the case when the basis $\XB$ is squeeze-symmetric we have,
\[
\leb_{m} \approx \kk_{m}, \quad m\in\NN,
\]
hence for this type of bases the growth of the Lebesgue parameters is completely controlled by the growth of the unconditionality parameters.

As an instrument to get information on the growth of $(\kk_m)_{m=1}^\infty$ we will use a related sequence of unconditionality parameters given by
\[
\kl_m[\XB,\XX]=\sup\left\{ \norm{ S_A[\XB,\XX](f)} \colon f \in B_\XX \cap [\xx_j\colon 1\le j \le m], \, A\subseteq\NN\right\},
\]
where $B_\XX$ denotes the closed unit ball of $\XX$. Note that $\kl_m\le \kk_m$ for all $m\in\NN$.

Embeddings involving sequence spaces are an important tool in the theory to estimate the unconditionality parameters of a basis (see \cite{AAB2021}*{Section 5}). The most important embeddings using bases are the ones that involve the spaces $\ell_q$, $0<q\le \infty$. The notions of $q$-Hilbertian and $q$-Besselian bases were introduced to formalize this idea. We recall that a basis $\XB$ of a quasi-Banach space $\XX$ is said to be \emph{$q$-Hilbertian} if $\ell_q$ embeds in $\XX$ via the basis $\XB$; and the basis $\XB$ is said \emph{$q$-Besselian} if $\XX$ embeds in $\ell_q$ via $\XB$. In this paper, we will need to measure how far a basis $\XB$ of a quasi-Banach space $\XX$ is from being $q$-Hilbertian or $q$-Besselian. To that end, we introduce the parameters $\bes_r[\XB,\XX,q]$ and $\hil_r[\XB,\XX,q]$ below.

Given a basis $\XB$ of a quasi-Banach space $\XX$, $0<q\le \infty$, and $r\in\NN$, we denote by $\bes_r[\XB,\XX,q]$ the smallest constant $C$ such that
\[
\norm{\sum_{n=1}^r a_n \, \xx_n} \le C \left( \sum_{n=1}^r \abs{a_n}^q\right)^{1/q}, \quad a_n\in\FF,
\]
and by $\hil_r[\XB,\XX,q]$ the smallest constant $C$ such that
\[
\left( \sum_{n=1}^r \abs{a_n}^q\right)^{1/q} \le C \norm{\sum_{n=1}^r a_n \, \xx_n}, \quad a_n\in\FF.
\]

\begin{lemma}\label{lem:kl}
Assume that all the hypotheses of Theorem~\ref{thm:SqSy} hold, that $\Sym$ satisfies an upper $s$-estimate for some $s\ge 1$ and a lower $q$-estimate for some $q\le\infty$. Then there is a constant $C$ such that for all $r\in\NN$,
\[
\kl_r[\XB,\XX] \le \kl_{M_r}[\EB,\YY] \le C \max\{ \kl_r[\XB,\XX], \hil_r[\XB,\XX,s], \bes_r[\XB,\XX,q] \}.
\]
\end{lemma}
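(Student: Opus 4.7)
The lower bound $\kl_r[\XB,\XX] \le \kl_{M_r}[\EB,\YY]$ is a direct lifting. Given $f = \sum_{j=1}^{r} a_j \xx_j \in \XX$ and $A \subseteq \NN$, I would set $g := \sum_{n=1}^{r} a_n \vv_n \in \YY$. This $g$ sits in the range of $P_\sigma$ and is supported on $[1,M_r]$, so $Q_\sigma(g)=0$ and $H_{\XB,\XX,\sigma}(g) = f$; hence $\norm{g}_\YY = \norm{f}_\XX$. Taking $A' := \bigcup_{n \in A,\, n \le r} \sigma_n$, the projection $S_{A'}[\EB,\YY](g)$ again lies in the range of $P_\sigma$ with $H(S_{A'}(g)) = S_A[\XB,\XX](f)$, so $\norm{S_A(f)}_\XX = \norm{S_{A'}(g)}_\YY \le \kl_{M_r}[\EB,\YY] \norm{g}_\YY$; the supremum over $f$ and $A$ finishes this half.

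For the upper bound I would fix $g \in \YY$ supported on $[1, M_r]$ with $\norm{g}_\YY = 1$ and $A' \subseteq \NN$, and decompose $g = u + v$ with $u := Q_\sigma(g)$ and $v := P_\sigma(g) = \sum_{n=1}^r c_n \vv_n$. Since $\norm{S_{A'}(g)}_\YY = \norm{Q_\sigma(S_{A'}(g))}_\Sym + \norm{H(S_{A'}(g))}_\XX$, I handle the two summands separately. The $\Sym$-summand I treat by Lemma~\ref{lem:Axnormestimate}\ref{it:DilworthB} (which exploits the upper $s$-estimate of $\Sym$) together with $\Lambda_{m_n}/\Lambda_{N_n} \le 1$, which yields
\[
\norm{Q_\sigma(S_{A'}(g))}_\Sym \lesssim \norm{u}_\Sym + \Bigl(\sum_{n} \abs{c_n}^s\Bigr)^{1/s} \lesssim (1 + \hil_r[\XB,\XX,s]) \norm{g}_\YY,
\]
using the definition of $\hil_r[\XB,\XX,s]$ applied to $H(g) = \sum_{n=1}^r c_n \xx_n$. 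For the $\XX$-summand, the identities $\vv_n^*(v) = c_n$ and $\vv_n^*(u) = 0$ produce the key decomposition
\[
H(S_{A'}(g)) = \sum_{n} \theta_n c_n \xx_n + H_{\XB,\XX,\sigma}(S_{A'}(u)), \qquad \theta_n := m_n/N_n \in [0,1],
\]
where $m_n = \abs{A' \cap \sigma_n}$. For the first piece I would expand each $\theta_n$ in binary so that $\sum_n \theta_n c_n \xx_n = \sum_{k \ge 1} 2^{-k} S_{B_k}[\XB,\XX](f)$ with $f := H(g)$ and $B_k \subseteq [1,r]$, then invoke the $p$-triangle inequality (for a $p$ making $\XX$ $p$-Banach) to dominate this by $C_p \kl_r[\XB,\XX] \norm{f}_\XX \le C_p \kl_r[\XB,\XX] \norm{g}_\YY$. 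For the second piece, Lemma~\ref{lem: projectionlemma} combined with the definition of $\bes_r[\XB,\XX,q]$ gives $\norm{H(S_{A'}(u))}_\XX \lesssim \bes_r[\XB,\XX,q] (\sum_n \norm{S_{\sigma_n}(u)}_\Sym^q)^{1/q}$, and the lower $q$-estimate of $\Sym$ applied to the pairwise disjoint blocks $S_{\sigma_n}(u)$ (which sum to $u$) caps this by $\bes_r[\XB,\XX,q] \norm{u}_\Sym \le \bes_r[\XB,\XX,q] \norm{g}_\YY$.

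The chief technical hurdle is the multiplier estimate $\norm{\sum \theta_n c_n \xx_n}_\XX \lesssim \kl_r[\XB,\XX] \norm{\sum c_n \xx_n}_\XX$ for $\theta \in [0,1]^r$: in a Banach space this is a trivial averaging of coordinate projections, but in the quasi-Banach setting averaging breaks, so the $p$-convexity of $\XX$ and the summability of $\sum 2^{-kp}$ must take over. A secondary delicate point will be to route the partial-block error term through $u = Q_\sigma(g)$ rather than through $v = P_\sigma(g)$; this is what prevents $\hil_r[\XB,\XX,s]$ and $\bes_r[\XB,\XX,q]$ from combining multiplicatively and preserves the additive $\max\{\kl_r, \hil_r, \bes_r\}$ bound.
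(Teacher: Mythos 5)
Your argument is correct and follows the same strategy as the paper: split $g$ into its $P_\sigma$ and $Q_\sigma$ components, estimate the $\Sym$-summand via Lemma~\ref{lem:Axnormestimate}\ref{it:DilworthB} together with $\hil_r$, and estimate the $\XX$-summand by controlling the $P_\sigma$-part through a multiplier bound in terms of $\kl_r$ and the $Q_\sigma$-part through $\bes_r$ plus the lower $q$-estimate. The only differences are cosmetic: the paper invokes \cite{AABW2021}*{Corollary~2.4} for the $[0,1]$-multiplier estimate that you reconstruct via binary expansion and the $p$-triangle inequality, and it routes the $Q_\sigma$-part through Lemma~\ref{lem:Axnormestimate}\ref{it:DilworthA} rather than Lemma~\ref{lem: projectionlemma}, but both paths give the same bound.
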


\begin{proof}
The proof of the left hand-side inequality goes over the lines of the proof of \cite{AADK2019b}*{Lemma 3.7}. To prove the right hand-side inequality we pick $r\in\NN$, $A\subseteq\NN$, and $f\in\YY$ with $\max( \supp(f)) \le M_r$. Note that this implies that $\supp(f)\subseteq\cup_{n=1}^r \sigma_r$. The vectors $g:=P_\sigma(f)$ and $h=:Q_\sigma(f)$ inherit this property from $f$. Since $\abs{\vv_n^*(S_A(g))}\le \abs{\vv_n^*(g)}$ for all $n\in\NN$, and $\vv_n^*(g)=0$ whenever $n>r$,
\[
\norm{H_{\XB,\XX,\sigma}(S_A(g))}_\XX\le C_1 \kl_r[\XB,\XX] \norm{H_{\XB,\XX,\sigma}(g)}_\XX,
\]
where $C_1$ is a geometric constant (see \cite{AABW2021}*{Corollary 2.4}). In turn, by Lemma~\ref{lem:Axnormestimate}\ref{it:DilworthA},
\begin{align*}
\norm{H_{\XB,\XX,\sigma}(S_A(h))}_\XX
&\le \bes_r[\XB,\XX,q]
\left(\sum_{n=1}^r \abs{\vv_n^*(S_A(h))}^q\right)^{1/q}\\
&\le C_2 \bes_r[\XB,\XX,q] \norm{P_\sigma(S_A(h))}_\Sym\\
&\le 4 C_2 \bes_r[\XB,\XX,q] \norm{P_\sigma(h)}_\Sym,
\end{align*}
where $C_2$ is the upper $C$-estimate constant of $\Sym$. Finally, if $C_3$ is the constant provided by Lemma~\ref{lem:Axnormestimate}\ref{it:DilworthB},
\begin{align*}
\norm{Q_\sigma(S_A(f))}_\Sym &\le 5 \norm{Q_\sigma(f)}_\Sym+C_3\left( \sum_{n=1}^r \abs{\vv_n^*(f)}^s\right)^{1/s}\\
&\le 5 \norm{Q_\sigma(f)}_\Sym+C_3 \hil_r[\XB,\XX,s] \norm{H_{\XB,\XX,\sigma}(f)}_\XX.
\end{align*}
Combining, we obtain the desired result.
\end{proof}

Lemma~\ref{lem:kl} provides an optimal estimate in the case when
\[
\max\{ \hil_r[\XB,\XX,s], \bes_r[\XB,\XX,q] \}\lesssim \kl_r[\XB,\XX], \quad r\in\NN.
\]
In this regard, we note that any semi-normalized Schauder basis of a quasi-Banach space $\XX$ satisfies the estimate
\[
\hil_m[\XB,\XX,s]\lesssim m^{1/s}, \quad r\in\NN,
\]
and that if $\XX$ is in addition locally $p$-convex,
\[
\bes_m[\XB,\XX,q]\lesssim m^{1/p-1/q}, \quad r\in\NN.
\]
Moreover, any superreflexive subsymmetric space satisfies an upper $s$-estimate for some $s>1$ and a lower $q$-estimate for some $q<\infty$. So, in order to apply Lemma~\ref{lem:kl}, it will be convenient to have \emph{highly conditional} Schauder bases for $\XX$, i.e., bases with $\kl_m \gtrsim m^{1/p}$ or, at least, $\kl_m \gtrsim m^a$ with $a$ arbitrarily close to $1/p$. In connection with this, we notice that the unconditionality parameters of any Schauder basis $\XB$ of a $p$-Banach space $\XX$ satisfy $\kk_m[\XB,\XX] \le m^{1/p}$, while if $\XX$ is a superreflexive Banach space we have $\kk_m[\XB,\XX] \lesssim m^a$ for some $a<1$ (see \cite{AAW2019}).

It is known that the $\ell_p$ spaces, $1<p<\infty$, have Schauder bases which are as highly conditional as desired (see \cite{GW2014}). To prove the existence of a highly conditional Schauder basis for $\ell_p$, $0<p\le 1$, we will make use of the \emph{difference system} $\DD=(\dd_n)_{n=1}^\infty$ defined for $n\in \NN$ by
\[
\dd_n=\ee_n -\ee_{n-1} \mbox{ (with the convention $\ee_0=0$)}.
\]

\begin{proposition}\label{prop:diflp}
Let $0<p<1$. The difference system $\DD=(\dd_n)_{n=1}^\infty$ is a semi-normalized Schauder basis of $\ell_p$ with $[\dd_n \colon 1 \le n \le N]=\ell_p^{(N)}$ for all $N\in\NN$ and
\[
\kl_m[\DD,\ell_p]=m^{1/p}, \quad m\in\NN.
\]
\end{proposition}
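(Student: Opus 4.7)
The statement has three claims: that $\DD$ is a semi-normalized Schauder basis of $\ell_p$, that $[\dd_n\colon 1\le n\le N]=\ell_p^{(N)}$, and the sharp equality $\kl_m[\DD,\ell_p]=m^{1/p}$. The first two I would dispatch by direct verification: $\|\dd_1\|_p=1$ and $\|\dd_n\|_p=2^{1/p}$ for $n\ge 2$ yield semi-normalization, while the telescoping relation $\ee_n=\dd_1+\cdots+\dd_n$ identifies the finite-dimensional spans. For the Schauder basis property I would exploit the inclusion $\ell_p\subseteq\ell_1$ (which holds because $p\le 1$): given $f=\sum_k a_k\ee_k\in\ell_p$, the tails $c_n:=\sum_{k\ge n}a_k$ converge absolutely with $c_n\to 0$, and the formal partial sum $\sum_{n=1}^N c_n\dd_n$ telescopes to $c_N\ee_N+\sum_{n<N}a_n\ee_n$. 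A short computation then gives $\bigl\|\sum_{n\le N}c_n\dd_n-f\bigr\|_p^p\le 2\sum_{k>N}|a_k|^p\to 0$, and uniqueness of the $\DD$-expansion is read off the coordinates.

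The core of the argument is the sharp estimate on $\kl_m$, for which I plan a reindexing trick. Fix $f=\sum_{k=1}^m a_k\ee_k\in\ell_p^{(m)}$ and $A\subseteq\NN$; the $\DD$-coefficients of $f$ are $b_j=\sum_{k=j}^m a_k$. Swapping the order of summation yields
\[
S_A(f)=\sum_{j\in A\cap[1,m]}b_j\,\dd_j=\sum_{k=1}^m a_k\,h_k,\qquad h_k:=\sum_{j\in A,\,j\le k}\dd_j.
\]
The crucial observation is that each $h_k$ is itself a telescoping sum: the coefficient of $\ee_n$ in $h_k$ equals $\Ind_{A\cap[1,k]}(n)-\Ind_{A\cap[1,k]}(n+1)\in\{-1,0,1\}$, and the support lies in $[1,k]$. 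Hence $\|h_k\|_p^p\le k$, and $p$-subadditivity of $\|\cdot\|_p^p$ yields $\|S_A(f)\|_p^p\le\sum_k|a_k|^p\|h_k\|_p^p\le m\|f\|_p^p$, proving $\kl_m\le m^{1/p}$. For the matching lower bound I would take $f=\ee_m$, for which $S_A(f)=h_m$, and choose $A\cap[1,m]=\{1,3,\dots,m\}$ when $m$ is odd or $\{2,4,\dots,m\}$ when $m$ is even, so that every coordinate of $h_m$ on $[1,m]$ equals $\pm 1$ and $\|h_m\|_p^p=m$.

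The main obstacle is obtaining the exact constant $m^{1/p}$ rather than merely $\lesssim m^{1/p}$. The naive bound $\|S_A(f)\|_p^p\le 2\sum_{j\in A}|b_j|^p$ obtained by applying $p$-subadditivity directly in the $\DD$-expansion loses a factor, because each $|b_j|^p$ can be of order $\|f\|_p^p$, giving only $(2m)^{1/p}$. The reindexing $S_A(f)=\sum_k a_k h_k$ is precisely what is needed to replace the counting of basis elements by the tighter bound $\|h_k\|_p^p\le k$ coming from the telescoping structure, thereby recovering the sharp constant.
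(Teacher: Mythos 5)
Your proof is correct and takes essentially the same route as the paper: the lower bound via $f=\ee_m$ together with the alternating set $A$ is exactly the paper's construction, and your Schauder-basis verification (worked from the $\EB$-expansion and showing convergence of the $\DD$-partial sums) is a cosmetic variant of the paper's argument, which instead bounds $\norm{S_m\bigl(\sum a_n\dd_n\bigr)}_p$ directly using $\abs{a_m}\le\bigl(\sum_{n\ge m}\abs{a_n-a_{n+1}}^p\bigr)^{1/p}$. The one substantive addition you make is spelling out the upper bound $\kl_m\le m^{1/p}$, which the paper's proof leaves implicit after establishing $\kl_m\ge m^{1/p}$. Your reindexing $S_A(f)=\sum_{k=1}^m a_k h_k$ with $h_k=S_A(\ee_k)$, together with the observation that each $h_k$ has coefficients in $\{-1,0,1\}$ and support in $[1,k]$, so $\norm{h_k}_p^p\le k\le m$, is precisely what is needed for the exact constant: the crude term-by-term bound $\norm{S_A(f)}_p^p\le\sum_{j\in A}\abs{\dd_j^*(f)}^p\norm{\dd_j}_p^p$ only gives $(2m)^{1/p}$ since $\norm{\dd_j}_p=2^{1/p}$ for $j\ge2$. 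Equivalently, because a linear map $T$ on $\ell_p^{(m)}$, $p\le1$, satisfies $\norm{T}=\max_{k\le m}\norm{T\ee_k}$, one has $\kl_m=\sup_{A}\max_{k\le m}\norm{S_A(\ee_k)}_p$, which is exactly what your computation evaluates. So you have supplied a clean proof of a step the paper does not write out.
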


\begin{proof}
Pick $(a_n)_{n=1}^\infty\in c_{00}$. Notice that
\[
f:=\sum_{n=1}^\infty a_n\, \dd_n= \sum_{n=1}^\infty (a_n-a_{n+1})\, \ee_n,
\]
hence, given $m\in\NN$,
\[
S_m(f):=\sum_{n=1}^m a_n\, \dd_n= a_m \, \ee_m+\sum_{n=1}^{m-1} (a_n-a_{n+1})\, \ee_n.
\]
Since
\[
\abs{a_m} \le \abs{\sum_{n=m}^\infty a_n-a_{n+1} } \le \sum_{n=m}^\infty \abs{a_n-a_{n+1}} \le
\left( \sum_{n=m}^\infty \abs{a_n-a_{n+1}}^p\right)^{1/p},
\]
it follows that,
\[
\norm{S_m(f)}_p \le \left( \abs{a_m}^p+\sum_{n=1}^{m-1} \abs{a_n-a_{n+1}}^p\right)^{1/p}\le \norm{f}_p.
\]
This proves that $\DD$ is a monotone Schauder basis. To estimate its unconditionality parameters we note that for $m\in\NN$,
\[
\sum_{n=1}^m \dd_n =\ee_m,\quad \sum_{n=1}^m \dd_{2n-1}=\sum_{j=1}^{2m-1} (-1)^{j-1} \ee_j, \quad
\sum_{n=1}^m \dd_{2n}=\sum_{j=1}^{2m} (-1)^{j} \ee_j.
\]
We infer that $\kl_{m} \ge m^{1/p}$ for all $m\in\NN$. Since $[\dd_n \colon 1\le n \le N]=[\ee_n \colon 1\le n \le N]$, we are done.
\end{proof}

Lemma~\ref{lem:kl} alerts us of the role that the \emph{right inverse} $(B_m)_{m=1}^\infty$ of the sequence $(M_r)_{r=1}^\infty$, defined by
\[
B_m=\sup\{ r \in \NN \colon m\le M_r\},
\]
could play within the study of the conditionality parameters. The following technical lemma helps us in this direction.

\begin{lemma}\label{lem:AAW}
Let $\varphi\colon[0,\infty)\to[1,\infty)$ be a concave increasing function. There is an increasing sequence $(M_r)_{r=1}^\infty$ in $\NN$ such $M_r \lesssim M_{r+1}-M_{r}$ for $r\in\NN$, and whose right inverse $(B_m)_{m=1}^\infty$ satisfies
\[
B_m \approx \varphi(\log m), \quad m\in\NN.
\]
\end{lemma}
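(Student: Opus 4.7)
The plan is to define $M_r$ to be essentially $\exp(\varphi^{-1}(r))$, so that the right-inverse relation $B_m \approx \varphi(\log m)$ falls out by construction, while the geometric growth $M_r \lesssim M_{r+1} - M_r$ follows from the fact that the inverse of a concave function is convex.

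We may assume (by a minor perturbation, e.g.\ replacing $\varphi$ with $\varphi + \varepsilon\log(1+\cdot)$ and letting $\varepsilon \to 0$ at the end) that $\varphi$ is strictly increasing with $\lim_{t \to \infty}\varphi(t) = \infty$; if $\varphi$ is bounded then $\varphi(\log m)$ is bounded and the conclusion is trivial by taking $M_r$ of any geometric growth. Set $\psi = \varphi^{-1}$, a convex increasing function defined on $[\varphi(0), \infty)$, and fix a shift $c_0 \geq \varphi(0)$ large enough that $\psi(r+c_0)$ makes sense for every $r \in \NN$. Define
\[
M_r := \ceil{\exp(\psi(r+c_0))}, \qquad r \in \NN.
\]

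For the geometric growth, convexity of $\psi$ implies that the forward differences $\psi(r + c_0 + 1) - \psi(r + c_0)$ are non-decreasing in $r$, hence bounded below by $\delta := \psi(c_0+1) - \psi(c_0) > 0$. Consequently $\exp(\psi(r+c_0+1)) \geq e^{\delta}\exp(\psi(r+c_0))$, and, choosing $c_0$ large enough that $\exp(\psi(c_0+1))$ dominates the $+1$ ceiling correction, this upgrades to $M_{r+1} \geq (1+\eta) M_r$ for some $\eta > 0$. In particular, $(M_r)$ is strictly increasing in $\NN$ and $M_{r+1} - M_r \geq \eta M_r$.

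Finally, by construction, the condition $M_r \leq m$ is, up to the bounded ceiling correction, equivalent to $\psi(r+c_0) \leq \log m$, i.e.\ $r \leq \varphi(\log m) - c_0$. Taking the appropriate supremum over $r \in \NN$ thus gives $B_m = \varphi(\log m) + O(1)$. The delicate point—and the main obstacle—is converting this additive equality into the multiplicative equivalence $B_m \approx \varphi(\log m)$ demanded by the statement. This is handled by concavity: for any $c \geq 0$ one has $\varphi(x+c) \leq \varphi(x) + \varphi(c) - \varphi(0) \leq \varphi(x) + \varphi(c)$, so a bounded additive shift in the argument of $\varphi$ causes at most a bounded additive distortion of the output; since $\varphi \geq 1$, this is absorbed into a bounded multiplicative factor, both for large $m$ (where $\varphi(\log m) \to \infty$ swamps the constant $\varphi(c_0)$) and for $m$ in any bounded range (where both sides are bounded away from $0$ and $\infty$).
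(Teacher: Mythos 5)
Your approach---defining $M_r$ as roughly $\exp(\psi(r))$ where $\psi=\varphi^{-1}$, and using convexity of $\psi$ to secure geometric growth of $(M_r)$---is the same one the paper takes. The paper sets $M_r=\lfloor b^{\psi(r)}\rfloor$ with a tunable base $b$ (in place of your shift $c_0$ and base $e$) and derives the growth from the nondecreasing behaviour of $t\mapsto\psi(t)/t$ (which uses $\psi(1)=0$), whereas you invoke the cleaner observation that forward differences of a convex function are nondecreasing; these are only cosmetic variations on the same argument.

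One caveat worth flagging: your aside that ``if $\varphi$ is bounded then the conclusion is trivial'' is false. If $\varphi$ is bounded, then $\varphi(\log m)$ stays bounded while $B_m\to\infty$ for \emph{any} increasing sequence $(M_r)_{r=1}^\infty$ of positive integers (since eventually $M_r\ge m$ forces $r$ unbounded), so $B_m\approx\varphi(\log m)$ is unattainable, not trivial. The lemma implicitly requires $\varphi$ to be unbounded (equivalently, a bijection $[0,\infty)\to[1,\infty)$); and once unbounded, a concave increasing function is automatically strictly increasing, so your $\varepsilon$-perturbation is unnecessary as well. Neither point affects your main argument, which, like the paper's, operates under this implicit hypothesis.
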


\begin{proof}
Let $\psi\colon[1,\infty) \to [0,\infty)$ be the inverse of $\varphi$. Choose $1<b<\infty$ such that $C:=b^{\psi(2)/2}>2$ and set
\[
M_r=\floor{ b^{\psi(r)}}, \quad r\in\NN.
\]
Since $\psi$ is convex, the map $t\mapsto \psi(t)/t$ is nondecreasing. Hence for $r\in\NN$,
\[
\frac{-1+b^{\psi(r+1)}}{b^{\psi(r)}}\ge -b^{-\psi(r)}+b^{\psi(r+1)/(r+1)}\ge -b^{-\psi(r)}+b^{\psi(2)/2}\ge C-1.
\]
We infer that $(C-1) M_r \le M_{r+1}$ for all $r\in\NN$. Therefore,
\[
M_r \le \frac{1}{C-2} (M_{r+1} - M_r), \quad r\in\NN.
\]
If $M_{r-1}<m\le M_r$, so that $r=B_m$, then $b^{\psi(r-1)} \le m \le b^{\psi(r)}$. Hence,
\[
-1+B_m \le \varphi(\log_b m) \le B_m, \quad m\in\NN.
\]
Since $\varphi$ is doubling, we are done.
\end{proof}

We close this section by proving that asymptotic unconditionality ensures that both kinds of uncontionality parameters are of the same order.

\begin{proposition}\label{prop:CCAU}
Let $\XB$ be an asymptotically suppression unconditional Schauder basis of a quasi-Banach space $\XX$. Then, $\kl_m[\XB,\XX] \approx\kk_m[\XB,\XX]$ for $m\in\NN$.
\end{proposition}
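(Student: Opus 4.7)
The plan is to prove the nontrivial direction $\kk_m[\XB,\XX]\lesssim \kl_m[\XB,\XX]$; the reverse inequality $\kl_m\le \kk_m$ is immediate, since for $f\in [\xx_j\colon 1\le j\le m]$ we have $S_A(f)=S_{A\cap[1,m]}(f)$ and $\abs{A\cap[1,m]}\le m$. The key idea is to decompose an arbitrary coordinate projection into two pieces: a ``tail'' to which the asymptotic suppression unconditionality hypothesis applies verbatim, and a ``head'' that only sees the first few coordinates of $f$ and is therefore controlled by $\kl$.

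Concretely, given $A\subseteq\NN$ with $\abs{A}\le m$ and $f\in\XX$, I would split $A=A_1\sqcup A_2$ with $A_1=A\cap[1,2m]$ and $A_2=A\setminus[1,2m]$. Since $\abs{A_2}\le m<\min A_2$, asymptotic suppression unconditionality supplies the bound $\norm{S_{A_2}(f)}\le D\norm{f}$. For the head, the identity $S_{A_1}(f)=S_{A_1}(S_{[1,2m]}(f))$ together with $S_{[1,2m]}(f)\in[\xx_j\colon 1\le j\le 2m]$ and $\norm{S_{[1,2m]}(f)}\le K\norm{f}$ (with $K$ the basis constant of $\XB$) yields $\norm{S_{A_1}(f)}\le K\kl_{2m}\norm{f}$. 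Combining the two estimates via the modulus of concavity of $\XX$ gives
\[
\kk_m\le \kappa\bigl(K\kl_{2m}+D\bigr).
\]

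What remains is a doubling estimate $\kl_{2m}\lesssim \kl_m$, which I would obtain by the same splitting trick performed inside the subspace $[\xx_j\colon 1\le j\le 2m]$. For $g$ in the unit ball of that subspace and $A\subseteq\NN$, one may assume $A\subseteq[1,2m]$; writing $A=B_1\sqcup B_2$ with $B_1=A\cap[1,m]$ and $B_2=A\cap[m+1,2m]$, the condition $\abs{B_2}\le m<\min B_2$ once again activates asymptotic suppression unconditionality, while $S_{B_1}(g)=S_{B_1}(S_{[1,m]}(g))$ is controlled by $K\kl_m\norm{g}$. Assembling produces $\kl_{2m}\le \kappa(K\kl_m+D)$, and since $\kl_m\ge 1$ for any semi-normalized basis this gives $\kl_{2m}\lesssim \kl_m$. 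Feeding this back into the previous display finishes the proof. There is no real obstacle: the whole argument rests on a single asymmetric split, performed twice (once in $\XX$ and once in the finite-dimensional subspace $[\xx_j\colon 1\le j\le 2m]$), and no convexity, regularity, or further structural assumption on $\XX$ beyond the quasi-Banach hypothesis is invoked.
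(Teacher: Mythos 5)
Your proof is correct and rests on exactly the same idea as the paper's: split $A$ into a head contained in an initial segment and a tail beyond it, control the head via $\kl$ after projecting onto the initial segment with the partial-sum projection, and control the tail via asymptotic suppression unconditionality. The one place you make extra work for yourself is the cutoff: splitting at $2m$ forces the additional doubling estimate $\kl_{2m}\lesssim\kl_m$, whereas splitting at $m$ --- as the paper does, with $B=[1,m]$ and $E=A\setminus[1,m]$ --- already gives $\abs{E}\le\abs{A}\le m<\min E$, so the asymptotic suppression unconditionality hypothesis applies directly and the doubling step is unnecessary.
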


\begin{proof}
Let $D$ be as in Definition~\ref{def:AU}. Let $\kappa$ be the modulus of concavity of $\XX$ and let $K$ be the basis constant of $\XB$. For $m\in\NN$ fixed, pick $A\subseteq\NN$ with $\abs{A} \le m$ and set $B=[1,m]\cap \ZZ$, $E=A\setminus [1,m]$. For every $f\in\XX$ we have
\begin{align*}
\norm{S_A(f) } &\le \kappa ( \norm{S_A(S_B(f))}+ \norm{S_E(f)})\\
&\le \kappa( \kl_{m} \norm{S_B(f)} + D \norm{f})\\
&\le \kappa (K \kl_{m}+D)\norm{f}.
\end{align*}
Therefore, $\kl_m\le\kk_m \le \kappa (K \kl_{m}+D)$ for all $m\in\NN$, and the proof is over.
\end{proof}
\section{Quasi-Banach spaces with almost greedy bases}\label{Main}\noindent
In this section we prove the existence of almost greedy bases in certain quasi-Banach spaces. Our discussion applies, among others, to the mixed-norm spaces and matrix spaces built from $\ell_p$ spaces, $0<p\le \infty$ listed in \eqref{dpq}, \eqref{zpq}, and \eqref{bpq}.

In the cases when $p$, $q\in[1,\infty]$, the almost greedy basis structure of these spaces is well understood (see \cite{DKK2003} and \cite{AADK2019b}). The cases that correspond to indices $0<p,q<1$ are also completetly settled since the corresponding spaces have no almost greedy bases unless $p=q$ (see \cite{AABe2022}). Here, we complement this study by proving that the mixed-norm spaces $D_{p,q}$, the matrix spaces $Z_{p,q}$ and $Z_{q,p}$, and the Besov spaces $B_{p,q}$ have a very rich almost greedy basis structure in the case when $0<p<1<q<\infty$.

Our main theoretical result in this section is the following.

\begin{theorem}\label{thm:existence}
Let $\XX$ be a quasi-Banach space with a Schauder basis $\XB=(\xx_n)_{n=1}^\infty$, and let $\Sym$ be a superreflexive subsymmetric sequence space. Let $\delta\colon[0,\infty) \to [0,\infty)$ be a non-increasing map such that
\[
\kl_m=\kl_m[\XB,\XX]= \delta(m),\quad m\in\NN.
\]
Suppose that $\XX$ is $p$-convex, $0<p\le 1$. Let $1 <s <\infty$ be such that $\Sym$ satisfies an upper $s$-estimate, and let $1<q<\infty$ be such that $\Sym$ satisfies a lower $q$-estimate. Let $\varphi\colon[0,\infty) \to [0,\infty)$ be a concave increasing function. Then the space $\XX\oplus\Sym$ has an almost greedy Schauder basis $\BB=(\bb_n)_{n=1}^\infty$ with the following additional properties:
\begin{enumerate}[label=(\roman*),leftmargin=*,widest=iii]
\item\label{cond:dem} $\sudf[\BB,\XX\oplus\Sym]\approx \Lambda[\Sym]$.
\item $\BB$ is asymptotically suppression unconditional.
\item\label{cond:fdb} Put $\XX^{(N)}=[\xx_n \colon 1\le n \le N]$ and $\BX^{(N)}=[\bb_n \colon 1\le n \le N]$ for $N\in\NN$. There are increasing sequences $(K_r)_{r=1}^\infty$, $(L_r)_{r=1}^\infty$ and $(P_r)_{r=1}^\infty$ in $\NN$ such that, for $r\in\NN$,
\begin{enumerate}[label=(\arabic*),leftmargin=*]
\item $\XX^{(r)}\oplus\Sym^{(K_r)}$ is uniformly complemented in $\BX^{(L_r)}$, and
\item $\BX^{(L_r)}$ is uniformly complemented in $\XX^{(r)}\oplus\Sym^{(P_r)}$.
\end{enumerate}
\item\label{optionB} $\kl_{m}[\BB,\XX\oplus\Sym]\gtrsim \delta(\log m)$ for $m\ge 2$.
\end{enumerate}
Further, in the case when $\kl_m\gtrsim m^{\max\{1/s,1/p-1/q\}}$ for $m\in\NN$ we can choose $\BB$ satisfying
\begin{enumerate}[label=(\roman*),leftmargin=*,widest=iii,resume]
\item\label{optionA} $\kl_{m}[\BB,\XX\oplus\Sym]\approx \delta(\varphi(\log m))$ for $m\in\NN$
\end{enumerate}
instead of \ref{optionB}.
\end{theorem}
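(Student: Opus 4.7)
The plan is to apply the DKK construction from Section~\ref{DKKMethod} directly to the original basis $\XB$, choosing an ordered partition $\sigma=(\sigma_n)_{n=1}^\infty$ whose growth is tailored to the desired growth of the Lebesgue parameters. By Proposition~\ref{prop:RadLURP}, the superreflexivity of $\Sym$ guarantees that $\Lambda[\Sym]$ has both the LRP and the URP, so as soon as we select $\sigma$ so that $M_r=\sum_{n=1}^{r}\abs{\sigma_n}\lesssim \abs{\sigma_{r+1}}$, Theorem~\ref{thm:QGTheorem} tells us that the unit vector system $\BB$ of $\YY:=\YY[\XB,\Sym,\sigma]$ is almost greedy with fundamental function equivalent to $\Lambda[\Sym]$, which is precisely~(i). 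Property~(ii) is Proposition~\ref{prop:projectionnormestimate}, and~(iii) is read off from Theorem~\ref{thm:FirstProperties}\ref{FP:3}: setting $L_r=M_r$, the uniform decomposition $\BX^{(L_r)}\simeq Q_\sigma(\Sym^{(M_r)})\oplus\XX^{(r)}$ reduces the task to showing that $\Sym^{(K_r)}$ is uniformly complemented in $Q_\sigma(\Sym^{(M_r)})$ for a suitable $K_r$ and that $Q_\sigma(\Sym^{(M_r)})$ sits uniformly complementably inside $\Sym^{(P_r)}$. The former is handled, with $K_r=\sum_{n=1}^{r}\lfloor N_n/2\rfloor$, by choosing disjointly supported mean-zero differences of the form $\ee_i-\ee_j$ with $i,j\in\sigma_n$ (these form a subsymmetric block basis equivalent to the unit vector basis of $\Sym^{(K_r)}$), while the latter holds with $P_r=M_r$ since $\Id-P_\sigma$ is uniformly bounded on $\Sym$.

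To identify $\YY$ with $\XX\oplus\Sym$ rather than $Q_\sigma(\Sym)\oplus\XX$ (as provided by Theorem~\ref{thm:FirstProperties}\ref{FP:2}), it suffices to prove $Q_\sigma(\Sym)\simeq\Sym$. On one hand, the subsymmetry of $\Sym$ makes the normalized constant-coefficient block basis $\VB_{\Sym,\sigma}$ equivalent to the unit vector basis of $\Sym$, so $P_\sigma(\Sym)\simeq\Sym$; hence $Q_\sigma(\Sym)$ is complemented in $\Sym$. On the other hand, disjointly supported normalized mean-zero vectors of the form $c_n(\ee_{i_n}-\ee_{j_n})$ with $i_n,j_n\in\sigma_n$ are equivalent by subsymmetry to the unit vector basis of $\Sym$ and span a uniformly complemented subspace inside $Q_\sigma(\Sym)$, so $\Sym$ embeds complementably in $Q_\sigma(\Sym)$. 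Together with the self-decomposability $\Sym\simeq\Sym\oplus\Sym$ and $Q_\sigma(\Sym)\simeq Q_\sigma(\Sym)\oplus Q_\sigma(\Sym)$, both of which follow from splitting the index set (resp.\ the index set of blocks) into two infinite parts, Pe\l{}czy\'nski's decomposition technique delivers $Q_\sigma(\Sym)\simeq\Sym$.

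The Lebesgue parameters are controlled via Lemma~\ref{lem:kl}, which applies because $\Sym$ is superreflexive and subsymmetric, hence satisfies an upper $s$-estimate for some $s>1$ and a lower $q$-estimate for some $q<\infty$. This gives
\[
\kl_r[\XB,\XX]\le \kl_{M_r}[\EB,\YY]\lesssim \max\{\kl_r[\XB,\XX],\,\hil_r[\XB,\XX,s],\,\bes_r[\XB,\XX,q]\}.
\]
For~(iv) I pick $N_n=2^n$ (so $M_r\lesssim N_{r+1}$ and the right inverse $B_m$ of $(M_r)$ satisfies $B_m\approx\log m$); the monotonicity of $r\mapsto\kl_r[\EB,\YY]$ and the lower inequality above then yield $\kl_m[\EB,\YY]\gtrsim\delta(B_m)\gtrsim\delta(\log m)$. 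For the optional~(v), the additional hypothesis $\kl_m\gtrsim m^{\max\{1/s,\,1/p-1/q\}}$, combined with the universal bounds $\hil_r\lesssim r^{1/s}$ and (thanks to the $p$-convexity of $\XX$) $\bes_r\lesssim r^{1/p-1/q}$, collapses the maximum on the right-hand side of Lemma~\ref{lem:kl} to $\kl_r[\XB,\XX]=\delta(r)$, producing the two-sided estimate $\kl_{M_r}[\EB,\YY]\approx\delta(r)$. Feeding Lemma~\ref{lem:AAW} with the concave $\varphi$ yields a partition whose right inverse satisfies $B_m\approx\varphi(\log m)$, whence $\kl_m[\EB,\YY]\approx\delta(B_m)\approx\delta(\varphi(\log m))$.

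The step that demands the most care is the last passage, from the two-sided estimate at the sampling points $m=M_r$ to arbitrary $m\in\NN$: monotonicity of $\kl_{\,\cdot\,}$ squeezes $\kl_m[\EB,\YY]$ between $\delta(B_m-1)$ and $\delta(B_m)$, and closing this gap to $\kl_m[\EB,\YY]\approx\delta(B_m)$ rests on a mild doubling-type regularity of $\delta$ (which is a regularity property of $\kl_m[\XB,\XX]$) inherited from the structural assumptions on $\XB$ and $\XX$. The verification of (iii) and the identification $Q_\sigma(\Sym)\simeq\Sym$ through Pe\l{}czy\'nski's decomposition are the other technical points, but both become standard once the subsymmetric block structure of $\Sym$ is fully exploited.
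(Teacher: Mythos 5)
Your proposal is essentially correct and reaches the conclusion by the same DKK machinery (choice of partition via Lemma~\ref{lem:AAW}, Theorem~\ref{thm:QGTheorem}, Proposition~\ref{prop:projectionnormestimate}, Lemma~\ref{lem:kl}), but you handle one key step differently. You apply the construction to the original basis $\XB$, getting $\YY[\XB,\Sym,\sigma]\simeq Q_\sigma(\Sym)\oplus\XX$ from Theorem~\ref{thm:FirstProperties}\ref{FP:2}, and then you must separately establish $Q_\sigma(\Sym)\simeq\Sym$ by Pe\l czy\'nski's decomposition. The paper instead applies the construction to the enlarged basis $\XB_0:=\XB\oplus\VB_{\Sym,\sigma}$ of $\XX_0:=\XX\oplus P_\sigma(\Sym)$, so that $\YY[\XB_0,\Sym,\sigma]\simeq Q_\sigma(\Sym)\oplus\XX_0=Q_\sigma(\Sym)\oplus P_\sigma(\Sym)\oplus\XX\simeq\Sym\oplus\XX$ follows at once from the trivial split $\Sym=P_\sigma(\Sym)\oplus Q_\sigma(\Sym)$, with no Pe\l czy\'nski step needed. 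Your route works, but one point in it is overstated: you justify $Q_\sigma(\Sym)\simeq Q_\sigma(\Sym)\oplus Q_\sigma(\Sym)$ by ``splitting the index set of blocks into two infinite parts,'' yet picking every other block $\sigma_n$ produces a different partition with different block sizes, so the two halves of $Q_\sigma(\Sym)$ are not obviously isomorphic to $Q_\sigma(\Sym)$ itself. Fortunately this squaring is not actually required: once you have $\Sym\simeq\Sym\oplus\Sym$ and both complemented embeddings, write $Q_\sigma(\Sym)\simeq\Sym\oplus E$ and deduce $Q_\sigma(\Sym)\oplus\Sym\simeq\Sym\oplus\Sym\oplus E\simeq\Sym\oplus E\simeq Q_\sigma(\Sym)$; combining with $\Sym\simeq P_\sigma(\Sym)\oplus Q_\sigma(\Sym)\simeq\Sym\oplus Q_\sigma(\Sym)$ then yields $Q_\sigma(\Sym)\simeq\Sym$. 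With that repair your argument is complete. Your treatment of~(iii) through mean-zero differences $\ee_i-\ee_j$ inside each $\sigma_n$ is the natural analogue, giving $K_r=\sum_{n\le r}\lfloor N_n/2\rfloor$ rather than the paper's $K_r=M_r$ (both are acceptable since the theorem only asks for some increasing sequence). Your discussion of~(iv)/(v) mirrors the paper's; note however that the two-sided estimate $\delta(B_m-1)\le\kl_m[\EB,\YY]\lesssim\delta(B_m)$ together with $\delta$ non-increasing already forces $\kl_m[\EB,\YY]\approx\delta(B_m)$, so the extra ``doubling-type regularity of $\delta$'' you invoke at the end is not needed at that particular point (though the implicit regularity used to replace $\delta(B_m)$ by $\delta(\varphi(\log m))$ is a subtlety shared by the paper). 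The paper's enlargement of the basis buys a cleaner isomorphism argument and avoids Pe\l czy\'nski decomposition; your approach is more elementary in its ingredients but requires the extra isomorphism lemma $Q_\sigma(\Sym)\simeq\Sym$.
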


\begin{proof} We start by fixing a suitable ordered partition $\sigma=(\sigma_n)_{n=1}^\infty$. The way we choose that partition depends on whether we will use it to prove $(iv)$ or $(v)$. To tackle the proof of $(iv)$ we choose $\sigma$ satisfying
\begin{enumerate}[label=(\alph*),leftmargin=*]
\item\label{caseB} $M_r:=2^{r+1}$ for all $r\in\NN$, so that the right inverse $(B_m)_{m=1}^\infty$ of $(M_r)_{r=1}^\infty$ satisfies $B_m\approx 1+\log m$ for $m\in\NN$.
\end{enumerate}
In turn, to tackle the proof of $\ref{optionA}$, we use Lemma~\ref{lem:AAW} to pick $\sigma$ such that
\begin{enumerate}[label=(\alph*),leftmargin=*, resume]
\item\label{caseA}$M_r:=\sum_{n=1}^r \abs{\sigma_n} \lesssim
\abs{\sigma_{n+1}}$ for $r\in\NN$, and $B_m\approx \varphi(\log m)$ for $m\in\NN$.
\end{enumerate}

Now, let $\XB_0$ be the Schauder basis $\XX_0:=\XX\oplus P_\sigma(\Sym)$ given by
\[
\XB_0=\XB\oplus \VB_{\Sym,\sigma}.
\]
Combining Proposition~\ref{prop:projectionnormestimate}, Theorem~\ref{thm:QGTheorem}, Theorem~\ref{thm:existence} and Theorem~\ref{thm:FirstProperties}\ref{FP:1}, gives that the unit vector system is a asymptotically suppression unconditional almost greedy Schauder basis of $\YY=\YY[\XB,\Sym,\sigma]$ with $\sudf[\EB,\YY]\approx \Lambda[\Sym]$. By Theorem~\ref{thm:FirstProperties}\ref{FP:2},
\[
\YY\simeq \XX_0\oplus Q_\sigma(\Sym) \simeq \XX\oplus\Sym\oplus P_\sigma(\Sym) \oplus Q_\sigma(\Sym)\simeq \XX \oplus \Sym\oplus \Sym \simeq \XX \oplus \Sym.
\]
Hence, the basis $\BB$ that corresponds to $\EB$ via the above isomorphism from $\YY$ onto $\XX \oplus \Sym$ is an almost greedy Schauder basis of $\XX \oplus \Sym$ which satisfies $\ref{cond:dem}$ and $\ref{cond:fdb}$.

Let $\VV_{\Sym,\sigma}^{(r)}$ be the subspace of $\Sym$ spanned by the first $r$ vectors of $\VB_{\Sym,\sigma}$, $r\in\NN$. Notice that $\VV^{(r)}=P_\sigma(\Sym^{(M_r)})$. Hence, by Theorem~\ref{thm:FirstProperties}\ref{FP:3},
\[
\YY^{(M_{2r})}\simeq \XX^{(r)} \oplus \VV_{\Sym,\sigma}^{(r)} \oplus Q_\sigma ( \Sym^{(M_{2r})}) \simeq
\XX^{(r)} \oplus \Sym^{(M_r)} \oplus Q_\sigma(\WW_r),
\]
uniformly, where $\WW_r=[\ee_n \colon M_r < n \le M_{2r}]$. Since $Q_\sigma (\WW_r)$ is uniformly complemented in $ \Sym^{(M_{2r})}$, \ref{cond:fdb} holds.

The basis $\XB_0$ inherits from $\XB$ the property that $\kl_m[\XB_0,\XX_0]\approx \delta(m)$ for $m\in\NN$. By Lemma~\ref{lem:kl}, either $\kl_{m}[\EB,\YY]\gtrsim \delta(B_m-1)$ or
\[
\delta(B_m-1)\lesssim \kl_{m}[\EB,\YY] \lesssim \delta(B_m)
\]
for $m\in\NN$. Since $\sup_m \kl_{m+1}/\kl_m<\infty$, either \ref{optionB} or \ref{optionA} holds, depending on whether we are in case~\ref{caseB} or \ref{caseA}. \end{proof}

Since any Banach space with a basis has a conditional basis (\cite{PelSin1964}), Theorem~\ref{thm:existence} serves to construct a conditional almost greedy basis of $\XX\oplus\Sym$. However, the main interest of the theorem resides in proving that certain Banach spaces without a greedy basis possess, at least, an almost greedy basis, and, further, in estimating its unconditionality parameters.

In this regard, let us mention that if $0<p<1<q<\infty$, the mixed-norm spaces $D_{p,q}$ and the Besov spaces $Z_{p,q}$ and $Z_{q,p}$ do not have greedy bases because their canonical basis is the unique unconditional basis of the space up to permutation (see \cite{AABW2021}*{\S11}). Because of the way these spaces are defined, and how the norm of vectors written down in terms of their canonical bases is computed, at first it might appear to be shocking that these spaces could have a democratic basis at all. However, as we will see they possess almost greedy bases since all of them contain a complemented copy of the superreflexive space $\ell_q$. We will also prove the existence of almost greedy bases in the Besov spaces $B_{p,q}$ for $0<p<1<q<\infty$, where the existence of greedy bases remains open.

We note that the unconditionality parameters of any almost greedy basis $\YB$ of a $p$-Banach space $\XX$, $0<p\le 1$, satisfy the estimate
\[
\kk_m[\YB,\YY] \lesssim (1+\log m)^{1/p}, \quad m\in\NN,
\]
(see \cite{AAW2021b}*{Theorem 5.1}). We will find almost greedy bases for which this estimate is sharp.

For convenience in the notation, from now on given $0<r\le \infty$ we will put $\pot_r=(m^{1/r})_{m=1}^\infty$.
\begin{proposition}\label{prop:ExistenceAG}
Let $0<p<1$ and let $\varphi\colon[0,\infty) \to [1,\infty)$ be a concave increasing function.
Let $\YY$ be one of the spaces $D_{p,q}$, $Z_{p,q}$, $Z_{q,p}$ or $B_{p,q}$. Then $\YY$ has an almost greedy asymptotically suppression unconditional Schauder basis $\YB$ with
\[\sudf[\YB,\YY] \approx \pot_q\quad \mbox{ and }\quad \kk_m[\YB,\YY]\approx \varphi^{1/p} (\log m), \quad m\in\NN.\]
\end{proposition}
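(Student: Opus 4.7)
The plan is to apply Theorem~\ref{thm:existence} with $\Sym=\ell_q$ and with an auxiliary quasi-Banach space $\XX$ equipped with a Schauder basis $\XB$ for which $\kl_m[\XB,\XX]\approx m^{1/p}$. Since $\ell_q$ is superreflexive and subsymmetric, satisfies an upper $s$-estimate for some $s>1$ together with a lower $q$-estimate, and has $\Lambda[\ell_q]=\pot_q$, the hypotheses of the theorem reduce to exhibiting such an $\XX$ and $\XB$ and an isomorphism $\YY\simeq \XX\oplus\ell_q$. Moreover, once $\kl_m[\XB,\XX]\approx m^{1/p}$ is known, the growth hypothesis $\delta(m)\gtrsim m^{\max\{1/s,\,1/p-1/q\}}$ demanded by item \ref{optionA} of Theorem~\ref{thm:existence} is automatic since $1/p$ dominates both $1/s$ (as $s>1>p$) and $1/p-1/q$.

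For $\YY=D_{p,q}$ we take $\XX=\ell_p$ endowed with the difference basis $\DD$; by Proposition~\ref{prop:diflp}, $\kl_m[\DD,\ell_p]=m^{1/p}$. For the matrix spaces $Z_{p,q}=\ell_q(\ell_p)$ and $Z_{q,p}=\ell_p(\ell_q)$ we first observe that both contain $\ell_p$ as a complemented subspace (a single row in the former; the first-coordinate column in the latter), so that combined with $\ell_p\oplus\ell_p\simeq\ell_p$ a routine splitting gives $\YY\simeq \ell_p\oplus\YY$; each of these spaces likewise contains $\ell_q$ complemented, whence $\YY\simeq\YY\oplus\ell_q$. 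We then set $\XX=\YY$ and define $\XB$ to be the direct sum (in the alternating sense of \S\ref{Introduction}) of $\DD$ on the distinguished $\ell_p$ summand and any fixed Schauder basis on the residual factor $\YY$. Because $\DD$ appears as a subsequence of $\XB$ and coordinate projections restrict accordingly, $\kl_m[\XB,\XX]\gtrsim m^{1/p}$; the matching upper bound follows from the $p$-convexity of $\YY$.

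The case $\YY=B_{p,q}$ is the most delicate, since $B_{p,q}$ contains no isomorphic copy of $\ell_p$. We exploit instead the isomorphism $B_{p,q}\simeq (\oplus_{n=1}^\infty \ell_p^{2^n})_q$ recalled in the Introduction and take $\XB$ to be the infinite concatenation of the difference bases on the blocks $\ell_p^{2^n}$, which is licit under the direct-sum-of-bases construction of \S\ref{Introduction} because the blocks are finite-dimensional and their basis constants and moduli of concavity are uniformly bounded. Putting $M_r:=\sum_{n=1}^{r}2^n=2^{r+1}-2$, the first $M_r$ vectors of $\XB$ contain the full difference system of $\ell_p^{2^r}$, and Proposition~\ref{prop:diflp} exhibits a coordinate projection among these of norm $2^{r/p}\approx M_r^{1/p}$. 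Geometric growth of $M_r$ then propagates to $\kl_m[\XB,B_{p,q}]\gtrsim m^{1/p}$ for every $m$, with the upper bound again from $p$-convexity. Since $B_{p,q}$ contains $\ell_q$ complemented (via the first coordinate of each block) we have $B_{p,q}\simeq B_{p,q}\oplus\ell_q$, so $\XX=B_{p,q}$ works.

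Feeding these data into Theorem~\ref{thm:existence} with $\delta(m)=m^{1/p}$ and the prescribed concave $\varphi$ produces an almost greedy, asymptotically suppression unconditional Schauder basis $\YB$ of $\XX\oplus\ell_q\simeq\YY$. Item \ref{cond:dem} of the theorem yields $\sudf[\YB,\YY]\approx\Lambda[\ell_q]=\pot_q$, while item \ref{optionA} gives $\kl_m[\YB,\YY]\approx\delta(\varphi(\log m))=\varphi^{1/p}(\log m)$; Proposition~\ref{prop:CCAU} then upgrades this to $\kk_m[\YB,\YY]\approx\varphi^{1/p}(\log m)$, which is precisely what we want. The main technical obstacle is the construction for $B_{p,q}$: the absence of a complemented $\ell_p$ forces the block-diagonal route above, and the quantitative conditionality $m^{1/p}$ genuinely depends on the geometric growth of the block sizes, which is what makes the concatenated difference-basis construction compatible with the infinite-direct-sum framework of \S\ref{Introduction}.
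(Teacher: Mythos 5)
Your proposal is correct and follows essentially the same route as the paper's proof: apply Theorem~\ref{thm:existence} with $\Sym=\ell_q$ after exhibiting a $p$-convex complement $\XX$ with a highly conditional basis built from the difference system of Proposition~\ref{prop:diflp}, using $\DD$ (resp.\ $\DD\oplus\YB$, resp.\ $(\bigoplus_n \DD^{(2^n)})_{\ell_q}$) on the various spaces, and then upgrade $\kl_m$ to $\kk_m$ via Proposition~\ref{prop:CCAU}. The paper uniformly takes $\XX=\ell_p\oplus\YY$, $\XB=\DD\oplus\YB$ in the three cases where $\ell_p$ is complemented, whereas you simplify to $\XX=\ell_p$, $\XB=\DD$ for $D_{p,q}$; both choices satisfy $\XX\oplus\ell_q\simeq\YY$ and yield $\kl_m\approx m^{1/p}$, so the difference is only cosmetic. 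Your treatment of $B_{p,q}$ is identical to the paper's, and your explicit verification of the complementation of $\ell_p$ and $\ell_q$ in $Z_{p,q}$, $Z_{q,p}$ and of the growth condition $\kl_m\gtrsim m^{\max\{1/s,\,1/p-1/q\}}$ fills in details the paper leaves implicit.
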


\begin{proof}
If $\ell_p$ is complemented in $\YY$ we just apply Theorem~\ref{thm:existence} with $\XX=\ell_p\oplus\YY$, $\XB=\DD\oplus\YB$, where $\YB$ is the canonical basis of $\YY$, and $\Sym=\ell_q$. To obtain the result for $B_{p,q}$, we apply Theorem~\ref{thm:existence} with $\XX=(\bigoplus_{n=1}^\infty \ell_p^{2^n})_{\ell_q}$, $\XB=(\bigoplus_{n=1}^\infty \DD^{(2^n)})_{\ell_q}$ and $\Sym=\ell_q$.
\end{proof}
\section{Fundamental functions of almost greedy bases}\label{sec:FF}\noindent
Once we know that a given space has almost greedy bases, it is natural to embark on a quantitative study aimed at determining all possible fundamental functions of that kind of bases. This computational approach is of interest from the applied point of veiw of greedy approximation theory, but also concerns the more abstract structural aspects of functional analysis. Indeed, as the alert reader may be aware of, the estimation of the norm of same-size blocks of a basis played an important role in determining important features of Banach spaces in the golden years of the theory.

In this section, we will determine the fundamental functions of all almost greedy bases of the spaces $Z_{p,q}$, $B_{p,q}$ and $D_{p,q}$ described in \eqref{dpq}, \eqref{zpq}, and \eqref{bpq}. To contextualize the subject, let us mention that the fundamental function of any almost-greedy basis of a $\SL_p$-space, $p\in(0,1]\cup\{2\}$, grows as $\pot_p$; furthermore, any quasi-greedy basis of such a space is democratic (see \cites{Woj2000,DSBT2012,AAW2021}). In contrast, since for $p\in(1,2)\cup(2,\infty)$, the space $L_p$ has almost greedy bases whose fundamental function grows as $\pot_2$ \cite{Nielsen2007}, the above-mentioned result on uniqueness of fundamental function does not hold for that range of values of $p$. As a matter of fact, any fundamental function of an almost greedy basis of a $\SL_p$-space, $1<p<\infty$, grows as either $\pot_p$ or $\pot_2$ \cite{Ansorena2022}. This applies, in particular, to the spaces $D_{p,2}$, $B_{p,2}$, $B_{2,p}$, $Z_{p,2}$ and $Z_{2,p}$, $1<p<\infty$. In this section, we extend the last result to all locally convex spaces $Z_{p,q}$, $B_{p,q}$ and $D_{p,q}$, thus complementing \cite{AADK2019b}*{Examples 4.6(i) and 4.16(ii)}. We also complement Theorem~\ref{prop:ExistenceAG} by determining all possible fundamental functions of the almost greedy bases of all nonlocally convex spaces $Z_{p,q}$, $B_{p,q}$ and $D_{p,q}$.

We supplement our review of spaces in which the behavior of the fundamental functions of almost greedy bases is known by noting that all quasi-greedy bases of Hardy spaces $H_p(\Disc^d)$, $0<p<1$, $d\in\NN$, are democratic, with fundamental function equivalent to $\pot_p$ \cite{AABe2022}.

For broader applicability, and also because they are a pivotal ingredient in our study, we deal with squeeze-symmetric biorthogonal systems $\XB=(\xx_n,\xx_n^*)_{n=1}^\infty$ instead of almost greedy bases. The fundamental function of $\XB$ will be the fundamental function of its first component, $\XB_0=(\xx_n)_{n=1}^\infty$. We emphasize that we do not even assume $\XB_0$ to be a basic sequence. Also, we would like to draw the reader's attention to the fact if $(\xx_n,\xx_n^*)_{n=1}^\infty$ is a squeeze-symmetric biorthogonal system in $\XX$, and $(\xx_n)_{n=1}^\infty$ spans a subspace $\YY$ of $\XX$, then the biorthogonal system $(\yy_n,\xx_n^*|_\YY)$ is squeeze-symmetric in $\YY$, but the converse does not hold in general. In this section, we will make heavy use of orthogonal systems that do not cover the whole space.

We start by recording some preparatory lemmas.

\begin{lemma}\label{lem:BFF}
Suppose that the fundamental function of a basis $\XB$ of a quasi-Banach space $\XX$ is bounded. Then, $\XX\simeq c_0$.
\end{lemma}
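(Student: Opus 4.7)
My plan is to show that $\XB=(\xx_n)_{n=1}^\infty$ is equivalent to the unit vector basis of $c_0$, which will immediately yield $\XX\simeq c_0$. Set $M=\sup_{m}\sudf[\XB,\XX](m)<\infty$, so that $\norm{\sum_{n\in A}\varepsilon_n\,\xx_n}\le M$ for every finite $A\subseteq\NN$ and every $\varepsilon\in\EE^A$. Taking $\abs{A}=1$ gives $\sup_n\norm{\xx_n}\le M$, and since $\XB$ is semi-normalized, $c:=\inf_n\norm{\xx_n}>0$; combined with the standard Schauder-basis estimate $\norm{\xx_n^*}\norm{\xx_n}\lesssim 1$ (with constant depending on the basis constant and the modulus of concavity of $\XX$), this yields $C_0:=\sup_n\norm{\xx_n^*}<\infty$, so that
\[
\norm{(a_n)}_\infty\le C_0\,\norm{\sum_n a_n\,\xx_n}
\]
for every $(a_n)\in c_{00}$; this is one half of the equivalence with the $c_0$ basis.

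For the reverse (upper) estimate I would invoke Aoki--Rolewicz to pick $0<p\le 1$ such that $\XX$ is $p$-Banach under an equivalent quasi-norm. Given $(a_n)\in c_{00}$ with $\norm{(a_n)}_\infty\le 1$, decompose $a_n=\lambda_n\,\varepsilon_n$ with $\lambda_n\in[0,1]$ and $\varepsilon_n\in\EE$, and expand $\lambda_n=\sum_{k=1}^\infty b_{n,k}\,2^{-k}$ with $b_{n,k}\in\{0,1\}$. Swapping the finite sum in $n$ with the series in $k$ yields
\[
\sum_n a_n\,\xx_n=\sum_{k=1}^\infty 2^{-k}\,h_k,\qquad h_k:=\Ind_{\varepsilon,A_k}[\XB,\XX],\quad A_k:=\{n\colon b_{n,k}=1\}.
\]
Each $h_k$ has the form $\Ind_{\varepsilon,A_k}[\XB,\XX]$, so $\norm{h_k}\le M$, and the $p$-Banach inequality yields
\[
\norm{\sum_n a_n\,\xx_n}^p\le\sum_{k=1}^\infty 2^{-kp}\,M^p=\frac{M^p}{1-2^{-p}}.
\]
Scaling shows that $\norm{\sum_n a_n\,\xx_n}\lesssim\norm{(a_n)}_\infty$ for every $(a_n)\in c_{00}$.

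The two-sided bound shows that the map $T\colon c_{00}\to\XX$, $(a_n)\mapsto\sum_n a_n\,\xx_n$, extends to a bounded-below map $\widetilde T\colon c_0\to\XX$; its image is closed and contains $\xx_n$ for every $n$, hence coincides with $\overline{[\xx_n\colon n\in\NN]}=\XX$, so $\widetilde T$ is an isomorphism of $c_0$ onto $\XX$. I expect the only real obstruction to be the upper estimate: the classical Banach-space proof identifies the $\ell_\infty$-unit ball with the closed convex hull of $\EE^A$, a convex-combination argument that collapses in the nonlocally convex setting. The binary expansion, paired with the absolute $p$-summability afforded by the $p$-Banach inequality, is the natural quasi-Banach substitute that replaces averaging over signs with geometrically decaying coefficients.
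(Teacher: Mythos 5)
Your proof is correct and self-contained, and it fills in an argument that the paper itself outsources to a citation (\cite{AABW2021}*{Corollary 2.4}), so the comparison is necessarily indirect; but what you do is almost certainly what that corollary encodes. The lower estimate $\norm{(a_n)}_\infty \lesssim \norm{\sum_n a_n \xx_n}$ is the standard Schauder-basis inequality. For the upper estimate, the binary expansion $\lambda_n = \sum_k b_{n,k} 2^{-k}$ combined with $p$-subadditivity is exactly the right quasi-Banach substitute for the convex-hull argument; your interchange of the (finite) sum in $n$ with the series in $k$ is justified because for fixed finite support the tail contributes at most $N^{1/p} 2^{-K} M$, which tends to $0$. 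The one thing worth making explicit is where semi-normalization enters: you invoke it to get $\inf_n \norm{\xx_n} > 0$, and this hypothesis is genuinely needed --- a rescaled basis like $\ee_n \oplus 2^{-n} \ff_n$ of $c_0 \oplus \ell_2$ has bounded fundamental function but the space is not $c_0$. In the paper this is an implicit standing convention (and in the applications the bases are squeeze-symmetric, hence democratic, which forces semi-normalization once $\sudf$ is bounded), but since the lemma statement says only "basis" you are right to flag the assumption rather than silently use it. Aside from that caveat, the argument is sound; the conclusion that the bounded-below extension $\widetilde T\colon c_0 \to \XX$ has closed range containing each $\xx_n$, hence is onto, correctly finishes the proof.
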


\begin{proof}
It is a ready consequence of \cite{AABW2021}*{Corollary 2.4}.
\end{proof}

Given $0<r\le 1$, the \emph{$r$-Banach envelope} of a quasi-Banach space $\XX$ consists of a $r$-Banach space $\XX_{c,r}$ together with a linear contraction $J_{r,\XX}\colon\XX \to \XX_{c,r}$ satisfying the following universal property: for every $r$-Banach space $\YY$ and every linear contraction $T\colon\XX \to\YY$ there is a unique linear contraction $T_{c,r}\colon \XX_{c,r} \to \YY$ such that $T_{c,r}\circ J_{r,\XX}=T$. We refer the reader to \cite{AACD2018} for more information about this notion, which generalizes the definition of Banach envelope introduced independently by Peetre and Shaphiro in the seventies \cites{Peetre1974,Shapiro1977}. Here, we will take adavantage of the fact that the $r$-Banach envelopes of $Z_{p,q}$, $B_{p,q}$ and $D_{p,q}$ are $Z_{c_r(p),c_r(q)}$, $B_{c_r(p),c_r(q)}$ and $D_{c_r(p),c_r(q)}$ respectively, where $c_r(s)=\max\{r,s\}$ for all $s\in(0,\infty]$.

\begin{lemma}\label{lem:Env}
Suppose that the fundamental function of a squeeze-symmetric basis of a quasi-Banach space $\XX$ grows as $\pot_p$, $0<p<1$. Then, for every $0<p<q\le 1$, the $q$-Banach envelope of $\XX$ is isomorphic to $\ell_q$.
\end{lemma}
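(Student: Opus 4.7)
My plan is to produce mutually inverse continuous linear operators between $\XX_{c,q}$ and $\ell_q$. Write $\XB=(\xx_n)_{n=1}^\infty$ for the given squeeze-symmetric Schauder basis and $(\xx_n^*)_{n=1}^\infty$ for its coordinate functionals, and let $\Sym_2$ be the right-hand squeezing space, so that $\Lambda[\Sym_2]\approx\pot_p$ and the coefficient transform $\Fou(f)=(\xx_n^*(f))_n$ maps $\XX$ boundedly into $\Sym_2$. The starting observation is that any symmetric sequence space $\Sym$ with $\Lambda[\Sym]\approx\pot_p$ embeds continuously into $\ell_{p,\infty}$ via the bound $a_n^*\Lambda_n[\Sym]\le\norm{f}_\Sym$; and since $p<q$ forces $\sum n^{-q/p}<\infty$, there is also a continuous inclusion $\ell_{p,\infty}\subseteq\ell_q$. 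Composing yields a continuous linear map $T\colon\XX\to\ell_q$, and because $\ell_q$ is a $q$-Banach space, the universal property of the envelope provides a continuous map $\pi\colon\XX_{c,q}\to\ell_q$ with $\pi\circ J_{q,\XX}=T$.

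I would then build $\iota\colon\ell_q\to\XX_{c,q}$ by the formula $(a_n)\mapsto\sum_n a_n J_{q,\XX}(\xx_n)$. Convergence and boundedness follow from the $q$-triangle inequality in $\XX_{c,q}$: the partial sums satisfy $\norm{\sum_{n=N}^M a_n J_{q,\XX}(\xx_n)}^q\le \sum_{n=N}^M\abs{a_n}^q\norm{\xx_n}^q$, and $\sup_n\norm{\xx_n}<\infty$ since $\XB$ is semi-normalized.

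To conclude I would verify that $\pi$ and $\iota$ are mutually inverse. Biorthogonality gives $\pi\circ\iota=\Id$ on $c_{00}$, hence on all of $\ell_q$ by density. For the reverse composition, since $\XB$ is a Schauder basis of $\XX$, every $f\in\XX$ admits the expansion $f=\sum_n\xx_n^*(f)\xx_n$; applying the continuous map $J_{q,\XX}$ and identifying the result with $\iota(\Fou(f))=\iota(\pi(J_{q,\XX}(f)))$ shows $\iota\circ\pi=\Id$ on the dense subspace $J_{q,\XX}(\XX)\subseteq\XX_{c,q}$, and hence on the whole envelope. The delicate point is recognizing that the series $\sum_n\xx_n^*(f)J_{q,\XX}(\xx_n)$ indeed converges in $\XX_{c,q}$ and agrees with $J_{q,\XX}(f)$; this is precisely where one invokes the $\ell_q$-summability of $(\xx_n^*(f))_n$ extracted in the first step, so the entire argument pivots on the inclusion $\ell_{p,\infty}\subseteq\ell_q$ made available by $p<q$.
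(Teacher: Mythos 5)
Your argument is correct, and it is a genuinely different route from the paper's, which simply cites \cite{AABW2021}*{Proposition 10.12} without exhibiting the isomorphism. You give a self-contained construction: the key step is to observe that a symmetric sequence space $\Sym_2$ with $\Lambda[\Sym_2]\approx\pot_p$ embeds in $\ell_{p,\infty}$ (via $a_m^*\Lambda_m[\Sym_2]\le\norm{f}_{\Sym_2}$), and that $\ell_{p,\infty}\subseteq\ell_q$ since $q/p>1$, so the coefficient transform lands boundedly in the $q$-Banach space $\ell_q$; then the universal property of the $q$-envelope yields $\pi$, while the $q$-triangle inequality in $\XX_{c,q}$ together with semi-normalization of $\XB$ (which, as you could note, is automatic for squeeze-symmetric systems via the embeddings $\Sym_1\to\XX\to\Sym_2$) yields $\iota$. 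The verification that $\pi$ and $\iota$ are mutual inverses on dense subspaces is carried out correctly, with the Schauder-basis expansion pushed through the continuous map $J_{q,\XX}$. Two small points worth recording: the universal property is stated for contractions, so $T$ should be rescaled by its norm before being factored through $J_{q,\XX}$; and the convergence of $\sum_n\xx_n^*(f)J_{q,\XX}(\xx_n)$ to $J_{q,\XX}(f)$ follows most cleanly by noting that the partial sums are exactly $J_{q,\XX}(S_N f)$, rather than by a separate summability argument. Neither affects the validity of the proof. Compared with the paper, your approach has the advantage of transparency and avoids importing a nontrivial external proposition; the paper's citation presumably buys brevity and, in its original context, extra generality covering biorthogonal systems that need not form a basis.
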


\begin{proof}
It follows from \cite{AABW2021}*{Proposition 10.12}.
\end{proof}

One of the advantages of dealing with biorthogonal systems instead of bases is the possibility to pass from a given biorthogonal system to another that inherits the properties of the original one while being easier to handle.
\begin{lemma}\label{lem:SSInhirited}
Suppose that a biorthogonal system $\XB=(\xx_n,\xx_n^*)_{n=1}^\infty$ in a quasi-Banach space $\XX$ is squeezed between symmetric sequence spaces $\Sym_1$ and $\Sym_2$. Let $(n_k)_{k=1}^\infty$ be an increasing sequence in $\NN$. Then the biorthogonal systems
\[
\YB=(\xx_{n_k},\xx_{n_k}^*)_{k=1}^\infty
\]
and
\[
\ZB=\left( \frac{\xx_{2n-1}-\xx_{2n}}{\sqrt{2}}, \frac{\xx_{2n-1}^*-\xx_{2n}^*}{\sqrt{2}}\right)_{n=1}^\infty
\]
are also squeezed between $\Sym_1$ and $\Sym_2$. In particular, if $\XB$ is squeeze-symmetric, so are $\YB$ and $\ZB$, and $\sudf[\XB,\XX] \approx \sudf[\YB,\XX]=\sudf[\ZB,\XX]$.
\end{lemma}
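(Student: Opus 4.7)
The plan is to verify the squeezing property of $\YB$ and $\ZB$ directly from its definition, exploiting the permutation invariance and the lattice-ideal structure of the symmetric spaces $\Sym_1$ and $\Sym_2$, and then read off the fundamental-function equivalence as a formal consequence of squeeze-symmetry. Biorthogonality of $\YB$ is inherited trivially from that of $\XB$, while for $\ZB$ it is a one-line computation: $\zz_n^*(\zz_m)=\frac{1}{2}(\xx_{2n-1}^*-\xx_{2n}^*)(\xx_{2m-1}-\xx_{2m})=\delta_{n,m}$.

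For $\YB$, given $(a_k)_{k=1}^\infty\in\Sym_1$ I would define $(b_n)_{n=1}^\infty\in\FF^\NN$ by $b_{n_k}=a_k$ and $b_n=0$ otherwise. Permutation invariance of $\Sym_1$ gives $\norm{(b_n)}_{\Sym_1}=\norm{(a_k)}_{\Sym_1}$, so the series transform bound for $\XB$ yields
\[
\norm{\sum_{k=1}^\infty a_k\xx_{n_k}}_\XX=\norm{\sum_{n=1}^\infty b_n\xx_n}_\XX\lesssim\norm{(a_k)}_{\Sym_1}.
\]
Dually, for $f\in\XX$ the sequence $(\xx_{n_k}^*(f))_{k=1}^\infty$ is a subsequence of $(\xx_n^*(f))_{n=1}^\infty\in\Sym_2$; permutation invariance together with the fact that symmetric sequence spaces are lattice ideals (monotone in the modulus) gives $\norm{(\xx_{n_k}^*(f))_k}_{\Sym_2}\le\norm{(\xx_n^*(f))_n}_{\Sym_2}\lesssim\norm{f}_\XX$.

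For $\ZB$, I would rewrite $\sum_n a_n\zz_n=\sum_j c_j\xx_j$ with $c_{2n-1}=a_n/\sqrt 2$ and $c_{2n}=-a_n/\sqrt 2$, and apply a single step of the quasi-triangle inequality in $\Sym_1$ to the splitting of $(c_j)$ into its odd- and even-indexed parts; permutation invariance collapses each piece to $\norm{(a_n)}_{\Sym_1}/\sqrt 2$, so $\norm{(c_j)}_{\Sym_1}\lesssim\norm{(a_n)}_{\Sym_1}$ and the series transform bound for $\XB$ closes the estimate. The coefficient transform is treated symmetrically: writing $(a_j)=(\xx_j^*(f))$, we have $(\zz_n^*(f))_n=((a_{2n-1}-a_{2n})/\sqrt 2)_n$, and one step of the quasi-triangle inequality in $\Sym_2$ plus the ideal property (selecting the subsequences at odd and at even positions) bounds this norm by a constant times $\norm{(a_j)}_{\Sym_2}\lesssim\norm{f}_\XX$.

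Once squeezing is established, the identity $\norm{\Ind_{\varepsilon,A}}_{\Sym_i}=\Lambda_{\abs{A}}[\Sym_i]$ for symmetric spaces gives $\sudf[\cdot,\XX](m)\lesssim\Lambda_m[\Sym_1]$ and $\sldf[\cdot,\XX](m)\gtrsim\Lambda_m[\Sym_2]$ for each of the three biorthogonal systems, so the squeeze-symmetry hypothesis $\Lambda[\Sym_1]\approx\Lambda[\Sym_2]$ forces the fundamental functions of $\XB$, $\YB$ and $\ZB$ to be mutually equivalent. No substantial obstacle is anticipated; the only caution is tracking the modulus of concavity when invoking the quasi-triangle inequality in $\Sym_1$ and $\Sym_2$, which costs only a single uniform factor since each splitting involves exactly two summands.
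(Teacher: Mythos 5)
Your proof is correct and amounts to an explicit unwinding of the paper's terse proof, which simply remarks that symmetric bases are unconditional, spreading, and equivalent to their square: your use of permutation invariance is the spreading property, the lattice-ideal monotonicity is unconditionality, and the odd/even splitting for $\ZB$ is the square property. The only thing to note is that the final claim should be read as an equivalence $\sudf[\XB,\XX]\approx\sudf[\YB,\XX]\approx\sudf[\ZB,\XX]$, which is exactly what your argument produces.
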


\begin{proof}
It follows from the fact that symmetric bases are unconditional, spreading, and equivalent to its square.
\end{proof}

The principle of small perturbations (see, e.g., \cite{AlbiacKalton2016}*{Theorem 1.3.9}) is an important stability result that is used to alter bases while retaining its essential features. Since a version of this principle does not seem to be available in the literature for biorthogonal systems in quasi-Banach spaces, we next record it for the sake of expositional ease.
\begin{lemma}
Let $(\xx_n,\xx_n^*)_{n=1}^\infty$ be a biorthogonal system in an $r$-Banach space $\XX$, $0<r\le 1$. Suppose that a sequence $(\yy_n)_{n=1}^\infty$ in $\XX$ satisfies
\[
\sum_{n=1}^\infty \norm{\xx_n-\yy_n}^r \norm{\xx_n^*}^r<1.
\]
Then $(\xx_n)_{n=1}^\infty$ and $(\yy_n)_{n=1}^\infty$ are congruent, i.e., there is an isomorphism $S\colon\XX\to\XX$ such that $S(\xx_n)=\yy_n$ for all $n\in\NN$.
\end{lemma}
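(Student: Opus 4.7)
The plan is to write the desired isomorphism as a small perturbation of the identity. Concretely, I will define $T\colon \XX \to \XX$ formally by
\[
T(f) = \sum_{n=1}^\infty \xx_n^*(f)\,(\yy_n - \xx_n),
\]
and show that $S := \Id + T$ is the sought-after isomorphism. There are three things to verify: (i) $T$ is well-defined and bounded with $\norm{T} < 1$; (ii) $S$ is invertible; and (iii) $S(\xx_n) = \yy_n$ for every $n$.

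For step (i), I would first check convergence of the defining series. Using the $r$-subadditivity of $\norm{\cdot}^r$, for any $1\le M \le N$ and any $f \in \XX$,
\[
\norm{\sum_{n=M}^N \xx_n^*(f)\,(\yy_n - \xx_n)}^r
\le \sum_{n=M}^N \abs{\xx_n^*(f)}^r\,\norm{\yy_n - \xx_n}^r
\le \norm{f}^r\sum_{n=M}^N \norm{\xx_n^*}^r\,\norm{\yy_n - \xx_n}^r.
\]
By hypothesis the series $c^r := \sum_n \norm{\xx_n - \yy_n}^r\,\norm{\xx_n^*}^r$ is finite and strictly less than $1$, so the partial sums form a Cauchy sequence in $\XX$, $T$ is well-defined on $\XX$, and taking $M=1$, $N\to\infty$ yields $\norm{T(f)} \le c\,\norm{f}$, so $\norm{T} \le c < 1$.

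For step (ii), I would invert $S$ using the Neumann series $R := \sum_{k=0}^\infty (-T)^k$. Because the operator quasi-norm on $\mathcal{B}(\XX)$ inherits $r$-subadditivity from the $r$-norm on $\XX$, and because $\norm{T^k} \le c^k$, the partial sums are Cauchy in $\mathcal{B}(\XX)$ (the relevant series $\sum_k c^{kr}$ is geometric with ratio $c^r<1$). Hence $R$ exists as a bounded operator, and the standard algebraic identity $R S = SR = \Id$ is obtained by passing to limits in the usual telescoping identities $(\sum_{k=0}^N (-T)^k)(I+T) = I - (-T)^{N+1}$.

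Step (iii) is just a one-line computation using biorthogonality $\xx_k^*(\xx_n) = \delta_{k,n}$:
\[
S(\xx_n) = \xx_n + \sum_{k=1}^\infty \xx_k^*(\xx_n)(\yy_k - \xx_k) = \xx_n + (\yy_n - \xx_n) = \yy_n.
\]
There is no genuine obstacle here; the only point at which the quasi-Banach setting matters is that we must use the $r$-subadditive inequality both to establish convergence of the series defining $T(f)$ and to run the Neumann series argument for $S^{-1}$. Both adaptations are automatic once one notes that the operator quasi-norm on $\mathcal{B}(\XX)$ is itself $r$-subadditive.
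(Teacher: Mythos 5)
Your proposal is correct and follows essentially the same route as the paper: define the perturbation operator $T$ (you take the negative of the paper's $T$, so your $S=\Id+T$ coincides with the paper's $S=\Id-T$), show $\norm{T}<1$ via the $r$-subadditivity of the quasi-norm, and invert $S$ by a Neumann series. You have merely spelled out the convergence and invertibility details that the paper leaves implicit.
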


\begin{proof}
The linear map $T\colon\XX\to\XX$ given by
\[
T(f)=\sum_{n=1}^\infty \xx_n^*(f) (\xx_n-\yy_n), \quad f\in\XX,
\]
satisfies $\norm{T}<1$. Therefore, $S=\Id_\XX-T$ is invertible and the proof is over.
\end{proof}

\begin{lemma}\label{lem:SDinSS}
Let $\LL$ be a minimal quasi-Banach lattice over $\NN$ (or over a countable set). Let $(\XX_n)_{n=1}^\infty$ be a sequence of finite-dimensional quasi-Banach spaces whose moduli of concavity are uniformly bounded. Let $\XB=(\xx_n,\xx_n^*)_{n=1}^\infty$ be a biorthogonal system in
\[
\XX=\left( \bigoplus_{n=1}^\infty \XX_n\right)_\LL.
\]
Let the support of $f=(f_j)_{j=1}^\infty\in\XX$ be the set
\[
\{n\in\NN \colon f_j\not=0\}.
\]
Suppose that $\XB$ is squeeze-symmetric. Then there is a disjointly supported squeeze-symmetric sequence $\YB$ such that
\[
\udf[\XB,\XX] \approx \udf[\YB,\XX].
\]
\end{lemma}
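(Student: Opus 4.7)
The plan is to combine a diagonal extraction, a sliding-hump construction, and the small-perturbation principle recorded just above. Let $\Sym_1$ and $\Sym_2$ be symmetric sequence spaces between which $\XB$ is squeezed, with $\Lambda[\Sym_1]\approx\Lambda[\Sym_2]$. Up to an Aoki--Rolewicz renorming we may assume $\XX$ is $r$-Banach for some $r\in(0,1]$. Because the coefficient transform $f\mapsto(\xx_n^*(f))_n$ is bounded into $\Sym_2$ and $\Sym_2$ embeds continuously into $\ell_\infty$, we have $M:=\sup_n\norm{\xx_n^*}<\infty$; dually the series transform gives $\sup_n\norm{\xx_n}<\infty$. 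For $A\subseteq\NN$ write $P_A$ for the canonical coordinate projection on $\XX$; the lattice structure yields $\norm{P_A}\le 1$, and the minimality of $\LL$ forces $\norm{f-P_{[1,N]}(f)}\to 0$ as $N\to\infty$ for every $f\in\XX$.

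The delicate point is that nothing a priori forces $\norm{P_A(\xx_n)}\to 0$ along any subsequence of $n$, so one cannot directly slide the supports of the $\xx_n$. To sidestep this, I would use a diagonal argument, exploiting the finite-dimensionality of $\bigoplus_{j\le N}\XX_j$ and the boundedness of $(\xx_n)$, to extract a subsequence $(\xx_{n_k})$ such that $P_{[1,N]}(\xx_{n_k})$ converges in norm as $k\to\infty$ for every $N\in\NN$. Invoking Lemma~\ref{lem:SSInhirited} twice---once on the subsequence and once on pairwise differences---the biorthogonal system
\[
\ww_k:=\frac{\xx_{n_{2k-1}}-\xx_{n_{2k}}}{\sqrt 2},\qquad \ww_k^*:=\frac{\xx_{n_{2k-1}}^*-\xx_{n_{2k}}^*}{\sqrt 2},\qquad k\in\NN,
\]
is again squeeze-symmetric between $\Sym_1$ and $\Sym_2$, with fundamental function equivalent to that of $\XB$; by construction $\sup_k\norm{\ww_k^*}\le\sqrt 2\,M$, and crucially $\norm{P_{[1,N]}(\ww_k)}\to 0$ as $k\to\infty$ for every fixed $N$.

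With these ingredients in hand I would perform a two-sided gliding hump. Fixing scalars $\varepsilon_k>0$ with $\sum_k\varepsilon_k^r(\sqrt 2\,M)^r<1$, an inductive construction produces integers $0=N_0<N_1<N_2<\cdots$ and a further subsequence of $(\ww_k)$, again unrelabelled, such that
\[
\norm{\ww_k-P_{[N_{k-1}+1,\,N_k]}(\ww_k)}<\varepsilon_k,\qquad k\in\NN;
\]
at step $k$ one first invokes minimality of $\LL$ to pick $N_k$ with $\norm{\ww_k-P_{[1,N_k]}(\ww_k)}$ small, and then moves far enough down the sequence to also make $\norm{P_{[1,N_{k-1}]}(\ww_k)}$ small. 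The vectors $\yy_k:=P_{[N_{k-1}+1,\,N_k]}(\ww_k)$ then have pairwise disjoint, finite supports in the sense of the statement.

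Finally, the small-perturbation principle, applied to the biorthogonal system $(\ww_k,\ww_k^*)$ and the sequence $(\yy_k)$, yields an automorphism $S\colon\XX\to\XX$ with $S(\ww_k)=\yy_k$ for every $k$, since $\sum_k\norm{\ww_k-\yy_k}^r\norm{\ww_k^*}^r<1$ by construction. Setting $\yy_k^*:=\ww_k^*\circ S^{-1}$ makes $(\yy_k,\yy_k^*)$ biorthogonal, and being the image of a squeeze-symmetric biorthogonal system under an isomorphism of $\XX$ it remains squeezed between $\Sym_1$ and $\Sym_2$. Thus $\YB:=(\yy_k)_{k=1}^\infty$ is disjointly supported and squeeze-symmetric, with
\[
\udf[\YB,\XX]\approx\udf[(\ww_k)_{k=1}^\infty,\XX]\approx\udf[\XB,\XX],
\]
as required. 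The main obstacle I anticipate is coordinating the two-sided gliding hump on the \emph{differences} $\ww_k$ rather than on the $\xx_n$ themselves: the left-projection control is not present in the original sequence and must be manufactured via the diagonal extraction before the hump argument becomes available.
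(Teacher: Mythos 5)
Your argument is correct and follows essentially the same route as the paper's proof: a Cantor diagonal extraction using finite-dimensionality of the blocks, passing to normalized differences (so that the left tails vanish coordinate-wise), a gliding-hump perturbation to disjointly supported vectors, and the small-perturbation principle together with Lemma~\ref{lem:SSInhirited}. The only difference is that you spell out the two-sided hump and the transfer of the dual functionals more explicitly than the paper does, but these are details the paper leaves implicit rather than a distinct method.
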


\begin{proof}
Since the unit ball of $\XX_n$ is pre-compact, Cantor's diagonal technique yields a subsequence of $(\xx_n)_{n=1}^\infty$ such that $(\xx_n(j))_{n=1}^\infty$ converges for every $j\in\NN$. Set $\yy_n=\xx_{2n-1}-\xx_{2n}$ for $n\in\NN$. Clearly, $\lim_n \yy_n(j)=0$ for all $j\in\NN$. The gliding hump technique yields a subsequence $(\zz_n)_{n=1}^\infty$ of $(\yy_n)_{n=1}^\infty$ and a disjointly supported sequence $(\uu_n)_{n=1}^\infty$ such that $\norm{\zz_n-\uu_n}$ is arbitrarily close to zero. By the principle of small perturbations, $(\uu_n)_{n=1}^\infty$ is congruent to $(\zz_n)_{n=1}^\infty$. An application of Lemma~\ref{lem:SSInhirited} puts an end to the proof.
\end{proof}

The optimal $r$ such that any of the spaces $Z_{p,q}$, $B_{p,q}$ or $D_{p,q}$ satisfies an upper $r$-estimate is $\min\{p,q\}$, and the optimal $r$ such that any of them satisfies a lower $r$-estimate is $\max\{p,q\}$.

\begin{proposition}\label{prop:SDandLE}
Let $\XX$ be a minimal quasi-Banach lattice over a discrete set. Suppose that $\XX$ satisfies an upper $p$-estimate and a lower $q$-estimate for some $0<p\le q\le \infty$. If $\XB$ is a squeeze-symmetric sequence in $\XX$, then:
\begin{enumerate}[label=(\roman*),leftmargin=*]
\item\label{a:SDandLE} $\pot_q \lesssim \sudf[\XB,\XX] \lesssim \pot_p$.
\item\label{b:SDandLE} If $p>1$, the sequence $\sudf[\XB,\XX]$ has the URP.
\item If $q<\infty$, the sequence $\sudf[\XB,\XX]$ has the LRP.
\end{enumerate}
\end{proposition}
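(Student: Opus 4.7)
My plan is to reduce to the case of a disjointly supported sequence via Lemma~\ref{lem:SDinSS}, where the upper $p$- and lower $q$-estimates of $\XX$ apply directly, and then to derive URP and LRP from the resulting power-type bounds by a block-splitting argument.

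First, since $\XX$ is a minimal quasi-Banach lattice over a discrete set, I can view $\XX$ as $(\bigoplus_n\FF\ee_n)_\XX$ and apply Lemma~\ref{lem:SDinSS} to produce a disjointly supported squeeze-symmetric sequence $\YB=(\yy_n)_{n=1}^\infty$ in $\XX$ whose upper democracy function is equivalent to that of $\XB$. Because $\YB$ is squeeze-symmetric, say between $\Sym_1\subseteq\XX\subseteq\Sym_2$ with $\Lambda[\Sym_1]\approx\Lambda[\Sym_2]$, the boundedness of the series transform (from $\Sym_1$) and of the coefficient transform (into $\Sym_2$) gives
\[
\sldf[\YB,\XX]\approx\sudf[\YB,\XX]\approx\Lambda[\Sym_2],
\]
so in particular $\sudf[\XB,\XX]\approx\sudf[\YB,\XX]$, and it suffices to prove the three conclusions for $\YB$. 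Along the way I note that $c_1:=\sup_n\norm{\yy_n}\le\sudf[\YB,\XX](1)<\infty$, while $c_0:=\inf_n\norm{\yy_n}>0$ because the coordinate functionals $\yy_n^*$ supplied by squeeze-symmetry are uniformly bounded and satisfy $\yy_n^*(\yy_n)=1$.

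For part~\ref{a:SDandLE}, the family $(\varepsilon_n\yy_n)_{n\in A}$ is pairwise disjoint for any $\varepsilon\in\EE^A$, so the upper $p$-estimate yields
\[
\norm{\sum_{n\in A}\varepsilon_n\yy_n}\lesssim\Big(\sum_{n\in A}\norm{\yy_n}^p\Big)^{1/p}\le c_1 \abs{A}^{1/p},
\]
and the lower $q$-estimate yields $\norm{\sum_{n\in A}\varepsilon_n\yy_n}\gtrsim c_0\abs{A}^{1/q}$. Combined with the equivalence $\sldf[\YB,\XX]\approx\sudf[\YB,\XX]$ from the previous paragraph, this gives $\pot_q\lesssim\sudf[\YB,\XX]\lesssim\pot_p$.

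For parts~\ref{b:SDandLE} and (iii), set $\Lambda(m):=\sudf[\YB,\XX](m)$. Given $b,m\in\NN$, pick $A\subset\NN$ of size $bm$, partition $A=\bigsqcup_{j=1}^b A_j$ with $\abs{A_j}=m$, and set $f_j:=\sum_{n\in A_j}\yy_n$. The $f_j$ are pairwise disjoint with $\norm{f_j}\approx\Lambda(m)$. The upper $p$-estimate then yields a constant $C$ with $\Lambda(bm)\le Cb^{1/p}\Lambda(m)$, and the lower $q$-estimate yields a constant $c>0$ with $\Lambda(bm)\ge c b^{1/q}\Lambda(m)$. If $p>1$, since $1-1/p>0$ I can pick $b$ so that $b^{1-1/p}\ge 2C$, i.e.\ $2Cb^{1/p}\le b$, which is the URP $2\Lambda(bm)\le b\Lambda(m)$. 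Dually, if $q<\infty$, choosing $b\ge(2/c)^q$ gives $c b^{1/q}\ge 2$, i.e.\ the LRP $2\Lambda(m)\le\Lambda(bm)$. The main difficulty is not depth but bookkeeping: one must verify that the upper, lower, and super democracy-type functions coincide up to constants on squeeze-symmetric sequences, and that Lemma~\ref{lem:SDinSS} transports the full strength of squeeze-symmetry (not only the upper democracy function) from $\XB$ to $\YB$, so that the lattice-level estimates really govern $\sudf[\XB,\XX]$ itself.
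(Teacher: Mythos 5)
Your proposal is correct and matches the paper's proof: the paper also reduces to a disjointly supported squeeze-symmetric sequence via Lemma~\ref{lem:SDinSS} and then derives the two-sided estimate $r^{1/q}\,\sudf[\XB,\XX](m)\lesssim\sudf[\XB,\XX](rm)\lesssim r^{1/p}\,\sudf[\XB,\XX](m)$ from the upper $p$- and lower $q$-estimates, from which (i)--(iii) follow exactly as you explain. The bookkeeping you flag at the end is handled by the fact that Lemma~\ref{lem:SDinSS} delivers $\YB$ that is itself squeeze-symmetric (hence $\sldf\approx\sudf$ for it), so your argument is complete.
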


\begin{proof}
An application of Lemma~\ref{lem:SDinSS} gives
\[
r^{1/q} \sudf[\XB,\XX](m) \lesssim \sudf[\XB,\XX](rm) \lesssim r^{1/p} \sudf[\XB,\XX](m), \quad r, \, m\in\NN.\qedhere
\]
\end{proof}

We are now in a position to prove the main result of this section.

\begin{theorem}\label{thm:LC}
Let $0< p, q \le \infty$. If $\XB$ is a squeeze-symmetric sequence in $Z_{p,q}$ there is $r\in\{p,q\}$ such that
\[
\sudf[\XB,\XX](m)\approx \pot_r(m),\quad m\in\NN.
\]
\end{theorem}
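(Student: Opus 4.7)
The plan is to reduce to a disjointly supported sequence and then classify the possible fundamental functions by analyzing the column-support structure in $Z_{p,q}=\ell_q(\ell_p)$. Since $Z_{p,q}$ is a minimal quasi-Banach lattice over $\NN\times\NN$, viewed as $(\bigoplus_{(i,j)}\FF)_{\LL_{p,q}}$, Lemma~\ref{lem:SDinSS} applies and lets me replace $\XB$ by a semi-normalized disjointly supported squeeze-symmetric sequence $(\yy_n)_{n=1}^\infty$ whose fundamental function is equivalent to $\sudf[\XB,Z_{p,q}]$. Denoting by $P_j$ the projection onto the $j$-th column and setting $b_{n,j}=\|P_j\yy_n\|_{\ell_p}$, disjointness of supports gives
\[
\Bigl\|\sum_{n\in A}\varepsilon_n\yy_n\Bigr\|_{Z_{p,q}}^q=\sum_j\Bigl(\sum_{n\in A}b_{n,j}^p\Bigr)^{q/p}
\]
for any signs $\varepsilon\in\EE^A$, with $\sum_j b_{n,j}^q\approx 1$ for every $n$.

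Passing to a subsequence using Cantor's diagonal (and preserving the fundamental function by Lemma~\ref{lem:SSInhirited}), I may assume $b_{n,j}\to\gamma_j\ge 0$ for each $j$; by Fatou, $\sum_j\gamma_j^q<\infty$. Set $J=\{j:\gamma_j>0\}$ and write $\yy_n=\yy_n^{(J)}+\yy_n^{(J^c)}$. On $J^c$ the coefficients vanish in the limit, so a standard gliding-hump in the column variable combined with the principle of small perturbations (with perturbation bounds tuned to the modulus of concavity of $Z_{p,q}$, which is $r$-normable for $r=\min\{p,q,1\}$) allows me to pass to a further subsequence along which the parts $\yy_n^{(J^c)}$ have pairwise disjoint column supports inside $J^c$. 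A final subsequence ensures $\|\yy_n^{(J^c)}\|^q\to e^q$ for some $e^q\in[0,1]$.

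The display formula now yields, for any $A$ with $|A|=m$,
\[
\Bigl\|\sum_{n\in A}\yy_n\Bigr\|_{Z_{p,q}}^q\approx\Bigl(\sum_{j\in J}\gamma_j^q\Bigr)\,m^{q/p}+e^q\,m,
\]
where the first summand comes from the stable columns in $J$ (for which $\sum_{n\in A}b_{n,j}^p\approx m\gamma_j^p$) and the second from the disjoint-column contribution in $J^c$. Three cases exhaust the possibilities: if $J=\emptyset$, disjoint column supports make $(\yy_n)$ equivalent to the unit vector basis of $\ell_q$ and $\sudf\approx\pot_q$; if $J\ne\emptyset$ and $p\le q$, the exponent $q/p\ge 1$ gives $m^{q/p}\ge m$, so the first summand dominates and $\sudf\approx\pot_p$; if $J\ne\emptyset$ and $p>q$, then $m^{q/p}=o(m)$, and the second summand dominates when $e>0$ giving $\sudf\approx\pot_q$, while $e=0$ leaves only the first summand and $\sudf\approx\pot_p$. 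In every branch the fundamental function is equivalent to $\pot_r$ for some $r\in\{p,q\}$.

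The main obstacle is executing the gliding-hump plus small-perturbation step cleanly in the quasi-Banach regime $p<1$ or $q<1$, where the modulus of concavity enters quantitatively and the perturbations must be geometrically summable in the right $r$-norm to apply congruence. A secondary subtlety is that when $J$ is infinite the approximation $\sum_{n\in A}b_{n,j}^p\approx m\gamma_j^p$ is not automatically uniform in $j$, but the summability $\sum_{j\in J}\gamma_j^q<\infty$ lets one truncate $J$ to a finite subset and absorb the tail contribution into the $\approx$.
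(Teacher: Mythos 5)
Your reduction to a disjointly supported sequence via Lemma~\ref{lem:SDinSS}, the passage to limits $b_{n,j}\to\gamma_j$, and the split into the set $J$ of ``stable columns'' and its complement follow the same outline as the paper's proof. The branches $J=\emptyset$ (where a gliding hump on $J^c$ gives equivalence with the $\ell_q$-basis) and $J\ne\emptyset$ with $p\le q$ (where any column $j_0\in J$ gives $\sldf\gtrsim\pot_p$, and Proposition~\ref{prop:SDandLE} supplies the matching upper bound $\sudf\lesssim\pot_{\min\{p,q\}}=\pot_p$) are sound. The problem lies in the branch $J\ne\emptyset$, $p>q$, which is precisely the paper's \textbf{Case 2}.

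The display
\[
\Bigl\|\sum_{n\in A}\yy_n\Bigr\|^q\approx\Bigl(\sum_{j\in J}\gamma_j^q\Bigr)m^{q/p}+e^q m
\]
is not justified, and in fact it is false. The step $\sum_{n\in A}b_{n,j}^p\approx m\gamma_j^p$ requires the coefficients $b_{n,j}$ to be comparable to their limit $\gamma_j$ \emph{uniformly in $n$}, which passing to a subsequence cannot force: the first few $b_{n,j}$ (those with small $n$, roughly $n\le j$) can be large, and a subsequence merely relabels which ones are ``small''. Concretely, take $q<1\le p$ and a disjointly supported sequence $(\yy_n)_{n}$ in $\ell_q(\ell_p)$ with $b_{n,j}=\gamma_j$ for $j\neq n$ and $b_{n,n}=1$, where $\gamma_j>0$ and $\sum_j\gamma_j^q<1$. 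Then $J=\NN$, $J^c=\emptyset$, $e=0$, and yet for $A=[1,m]$,
\[
\Bigl\|\sum_{n\in A}\yy_n\Bigr\|^q=\sum_{j\le m}\bigl(1+(m-1)\gamma_j^p\bigr)^{q/p}+m^{q/p}\!\!\sum_{j>m}\gamma_j^q\;\approx\;m,
\]
since the $\sim m$ many terms with $(m-1)\gamma_j^p\lesssim1$ already contribute $\approx m$, and $m^{q/p}=o(m)$. So $\sudf\approx\pot_q$, not $\pot_p$ as your formula predicts. (This $(\yy_n)$ is also genuinely squeeze-symmetric: pairing it with $\yy_n^*=\ee^*_{(n,k_n)}$, where $(n,k_n)$ carries the unit spike, the coefficient transform is bounded into $\ell_q$ and the series transform is bounded from $\ell_q$, so it is squeezed between $\ell_q$ and $\ell_q$.)

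What is missing is precisely what the paper's Case 2 uses and your proposal never touches: the dual functionals $\xx_n^*$ and the squeeze-symmetry bound of Lemma~\ref{lem:Dem+TQG}. The paper distinguishes whether the \emph{paired} quantity $(\xx_n^*-\xx_m^*)(P_N(\xx_n-\xx_m))$ can be made small for all $N$ (escape to infinity, giving $\pot_q$) or stays bounded below for some fixed $N$ (in which case a delicate diagonal-extraction argument on $\xx_j^*(P_N\xx_n)$ produces a semi-normalized disjointly supported block in $\ell_q^{(N)}(\ell_p)\simeq\ell_p$ whose coefficients $\xx_n^*(\Ind_A)$ stay bounded below, and Lemma~\ref{lem:Dem+TQG} then forces $\sudf\lesssim\pot_p$). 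This dichotomy cannot be detected from the column norms $b_{n,j}=\|P_j\yy_n\|_{\ell_p}$ alone, which is why a purely lattice-side gliding-hump argument breaks down in the $p>q$ regime. You would need to bring the biorthogonal functionals into the analysis to close the gap.
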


\begin{proof}
By Lemma~\ref{lem:SDinSS}, we can suppose that $\XB=(\xx_n)_{n=1}^\infty$ is disjointly supported. Let $r$ be such that $Z_{p,q}$ is an $r$-Banach space. Set
\[
C= \sup_{n\in\NN}\norm{\xx_n^*},
\]
where $\XB^*=(\xx_n^*)_{n=1}^\infty$ in $Z_{p,q}^*$ is such that the biorthogonal system $(\xx_n,\xx_n^*)_{n=1}^\infty$ is squeeze-symmetric. Given $N\in\NN$, the map
\[
P_N\colon Z_{p,q} \to Z_{p,q}
\]
will be the canonical projection onto $\YY=\ell_q^{(N)}(\ell_p)$.

We distinguish two complementary cases.

\noindent \textbf{Case 1: $q\ge p$.} \\
Suppose that for $N\in\NN$, $\delta>0$, and $n_0\in\NN$, there is $n\in\NN$, $n\geq n_0$, such that $\norm{P_N(\xx_n)}<\delta$. Then, given $\bm{\delta}=(\delta_n)_{n=1}^\infty$ in $(0,\infty)$, we recursively construct increasing sequences $(N_k)_{k=1}^\infty$ and $(n_k)_{k=1}^\infty$ such that, if
\[
\yy_k=(P_{N_{k+1}}-P_{N_k})(\xx_{n_k}) , \quad k\in\NN,
\]
then $\norm{\xx_{n_k}-\yy_k} \le \delta_k$ for all $k\in\NN$. By the principle of small perturbations, choosing $\bm{\delta}$ suitably we obtain that $(\xx_{n_k})_{k=1}^\infty$ is equivalent to $(\yy_k)_{k=1}^\infty$. In turn, $(\yy_k)_{k=1}^\infty$ is equivalent to the unit vector system of $\ell_q$.

Suppose now that there are $N\in\NN$ and $n_0\in\NN$ such that
\[
\delta:=\inf_{n\ge n_0} \norm{P_N(\xx_n)}>0.
\]
Then $\sldf[\XB,\XX](m) \ge \delta m^{1/p}$ for all $m\in\NN$. By Proposition~\ref{prop:SDandLE}\ref{a:SDandLE}, $\sudf[\XB,\XX] \approx \pot_p$.

\medskip \noindent\textbf{Case 2: $q<p$.} \\
Suppose that for $N\in\NN$, $0<\delta<1$, and $n_0\in\NN$ there are $n>m>n_0$ such that
\begin{equation}\label{eq:C2}
\abs{ (\xx_n^*-\xx_m^*)(P_N(\xx_n-\xx_m))}<\delta.
\end{equation}
Then, since $(\xx_n^*-\xx_m^*)(\xx_n-\xx_m)=2$ and $\norm{\xx_n^*-\xx_m^*} \le 2 C$, \eqref{eq:C2} implies that
\[
\norm{ \xx_n-\xx_m -P_N(\xx_n-\xx_m)}=\sup_{M>N} \norm{(P_M-P_N)(\xx_n-\xx_m)} \ge \frac{2-\delta}{2C}.
\]
Fix $0<\lambda<1/C$. We recursively construct increasing sequences $(N_k)_{k=1}^\infty$ and $(n_k)_{k=1}^\infty$ in $\NN$ such that the sequence $\YB=(\yy_k)_{k=1}^\infty$ given by
\[
\yy_k=( P_{N_{k+1}}-P_{N_k})(\xx_{n_{2k}} - \xx_{n_{2k-1}} )
\]
satisfies $\norm{\yy_k} \ge \lambda$ for all $k\in\NN$. We infer that $\sudf[\XB,\XX](2m) \ge \lambda m^{1/q}$ for all $m\in\NN$. Consequently, by Proposition~\ref{prop:SDandLE}\ref{a:SDandLE}, $\sudf[\XB,\XX]\approx \pot_q$.

Now, we suppose that instead there are $N\in\NN$, $\delta>0$ and $n_0$ such that
\[
\abs{(\xx_n^*-\xx_m^*)(P_N(\xx_n-\xx_m))}\ge \delta, \quad n >m>n_0.
\]

Given $(f,f^*)\in\XX\times\XX^*$, the sequences
\[
f^*(P_N(\xx_n)), \quad\mbox{ and }\quad P_N^*(\xx_n^*)(f)=\xx_n^*(P_N(f)), \quad n\in\NN,
\]
are bounded. Then there is an increasing map $\alpha\colon\NN\to\NN$ such that the sequences
\[
f^*(P_N(\xx_{\alpha(n)})), \quad \xx_{\alpha(n)}^*(P_N(f)), \quad n\in\NN,
\]
converge. Combining this fact with Cantor's diagonal argument we obtain an increasing map $\beta\colon\NN\to\NN$ such that the sequences
\[
\xx_j^*(P_N(\xx_{\beta(n)})), \quad\mbox{ and }\quad \xx_{\beta(n)}^*(P_N(\xx_j)), \quad n\in\NN,
\]
converge for every $j\in\NN$. Hence, the biorthogonal system $(\zz_n,\zz_n^*)_{n=1}^\infty$ given by
\[
\zz_n= \frac{\xx_{\beta(2n-1)}-\xx_{\beta(2n)}}{\sqrt{2}}, \quad \zz_n^*=\frac{\xx_{\beta(2n-1)}^*-\xx_{\beta(2n)}^*}{\sqrt{2}}
\]
satisfies
\[
\lim_n \zz_j^*(P_N(\zz_n))=\lim_n \zz_n^*(P_N(\zz_j))=0, \quad j\in\NN.
\]
Pick a non-increasing sequence $(\epsilon_n)_{n=1}^\infty$ of positive scalars such that
\[
\sum_{n=2}^\infty \epsilon_n \le \delta/4.
\]
There is an increasing map $\gamma\colon\NN\to\NN$ such that
\[
\abs{ \zz_{\gamma(j)}^*(P_N(\zz_{\gamma(n)}))} \le \epsilon_{\max\{j,n\}}, \quad j\not=n.
\]
Since by Lemma~\ref{lem:SSInhirited} we can replace $(\xx_n)_{n=1}^\infty$ with $(\zz_{\gamma(n)})_{n=1}^\infty$, we may safely assume that
\begin{itemize}
\item $\abs{\xx_n^*(P_N(\xx_n))}\ge \delta/2$ for all $n\in\NN$,

\item $\abs{\xx_j^*(P_N(\xx_n))}\le \epsilon_{\max\{j,n\}}$ for all $(j,n)\in\NN^2$ with $j\not=n$, and

\item $\XB$ is disjointly supported.

\end{itemize}
Since $\ZB:=(P_N(\xx_n))_{n=1}^\infty$ is semi-normalized and disjointly supported, and $\YY$ is lattice isomorphic to $\ell_p$, we conclude that $\ZB$ is equivalent to the standard $\ell_p$-basis. For $A\subseteq\NN$ finite and $n\in A$ we have
\begin{align*}
\abs{\xx_n^*(\Ind_A[\ZB,Z_{p,q}])}
&\ge \abs{\xx_n^*(P_N(\xx_n))}-\sum_{j\in A\setminus\{n\}}{\xx_n^*(P_N(\xx_j))}\\
&\ge \frac{\delta}{2}-\sum_{j\in A\setminus\{n\}}\epsilon_{\max\{j,n\}}\\
&\ge \frac{\delta}{2}-\sum_{j\in \NN\setminus\{n\}}\epsilon_{\max\{j,n\}}\\
&=\frac{\delta}{2}-(n-1)\epsilon_n + \sum_{j=n+1}^\infty \epsilon_j\\
&\ge\frac{\delta}{2}- \sum_{j=2}^\infty \epsilon_j\ge \frac{\delta}{4}.
\end{align*}
For $m\in\NN$, choose $A_m\subseteq\NN$ with $\abs{A_m}=m$. By squeeze-symmetry,
\[
\sudf[\XB,\XX](m) \lesssim \norm{ \Ind_{A_m}[\ZB,Z_{p,q}]) } \approx m^{1/p}, \quad m\in\NN.
\]
An application of Proposition~\ref{prop:SDandLE}\ref{a:SDandLE} puts an end to the proof.
\end{proof}

\begin{remark}
If $(\xx_n,\xx_n^*)_{n=1}^\infty$ is a squeeze-symmetric biorthogonal system of $D_{p,q}$, we can prove Theorem~\ref{thm:LC} easily. Indeed, we can assume without loss of generality that $(\xx_n)_{n=1}^\infty$ is disjointly supported and that $0<p\le q$. Write $\xx_n=(\yy_n,\zz_n)$, $n\in\NN$.

If $\delta:=\inf_n \norm{\zz_n}_p>0$, then $\sldf[\XB,D_{p,q}](m) \ge \delta m^{1/p}$ for all $m\in\NN$. Consequenty, by Lemma~\ref{prop:SDandLE}\ref{a:SDandLE}, $\sldf[\XB,D_{p,q}] \approx \pot_p$.

Otherwise, the principle of small perturbations yields a subsequence of $\XB$ which is equivalent to a subsequence of
$\YB=(\yy_n)_{n=1}^\infty$. Since $\YB$ is semi-normalized and disjointly supported, $\YB$ is equivalent to the canonical $\ell_q$-basis. We infer that $\sudf[\XB,D_{p,q}] \approx\pot_q$.
\end{remark}

Theorem~\ref{thm:LC} allows us to significantly advance the understanding of the fundamental functions of almost greedy bases of matrix spaces $Z_{p,q}$. Notice that, since $D_{p,q}$ and $B_{p,q}$ are complemented subspaces of $Z_{p,q}$, Theorem~\ref{thm:LC} also applies to these families of mixed-norm spaces. As we will see in the upcoming Sections~\ref{SecCase1} and \ref{SecCase2}, the spaces $Z_{p,q}$ and $D_{p,q}$ have a similar almost greedy bases structure, while the almost greedy bases structure of the spaces $B_{p,q}$ deviates slightly from their pattern.

\subsection{Fundamental functions of almost greedy bases of Besov spaces $Z_{p,q}$ and mixed-norm spaces $D_{p,q}$, $0<p,q<\infty$.}\label{SecCase1}
\begin{enumerate}[label=(\alph*),leftmargin=*]
\item\label{ex:Zpq:a} Let $1\le p,q<\infty$. By Theorem~\ref{thm:LC}, the fundamental function of any squeeze-symmetric basis of $Z_{p,q}$ and $B_{p,q}$ is equivalent to either $\pot_p$ or $\pot_q$. Conversely, by \cite{AADK2019b}*{Examples 4.6(ii) and 4.16(i)}, given $r\in\{p,q\}$, the space $Z_{p,q}$ has almost greedy bases whose fundamental functions grow as $\pot_r$.

\item Let $1\le p<\infty$. Since none of the spaces $Z_{0,p}$, $Z_{p,0}$ or $D_{0,p}$ is isomorphic to $c_0$,
combining Theorem~\ref{thm:LC} with Lemma~\ref{lem:BFF} gives that the fundamental function of any squeeze-symmetric basis of these spaces grows as $\pot_p$. Conversely, by \cite{AADK2019b}*{Example 4.16(i)}, $Z_{0,p}$, $Z_{p,0}$ and $D_{0,p}$ have almost greedy bases whose fundamental functions grow as $\pot_p$.

\item Let $0<q<1<p<\infty$. By Proposition~\ref{prop:ExistenceAG}, $Z_{p,q}$, $Z_{q,p}$ and $B_{p,q}$ have almost greedy bases whose fundamental functions grow as $\pot_p$. Reciprocally, by Theorem~\ref{thm:LC}, any squeeze-symmetric basis of any of these spaces has fundamental function equivalent to $\pot_r$ for $r\in\{p,q\}$. Since the Banach envelope of these spaces is not isomorphic to $\ell_1$, an application of Lemma~\ref{lem:Env} gives that $r=p$.

\item Let $0<p<1$. Since the Banach envelope of $Z_{p,0}$, $Z_{0,p}$ or $D_{p,0}$ is not isomorphic to $\ell_1$, combining Theorem~\ref{thm:LC}, Lemma~\ref{lem:BFF} and Lemma~\ref{lem:Env} gives that no of these spaces has a squeeze-symmetric basis.

\item Let $0<p,q<1$, $p\not=q$. By \cite{AABe2022}, neither $Z_{p,q}$ nor $D_{p,q}$ has a squeeze-symmetric basis.

\item Let $0<p< 1$. It seems to be unknown whether $Z_{p,1}$, $Z_{1,p}$ or $D_{p,1}$ have an almost greedy basis. If it were the case, its fundamental function would be equivalent to $\pot_1$. Indeed, if $\XB$ were a squeeze-symmetric basis of $Z_{p,1}$, $Z_{1,p}$, or $D_{p,1}$, its fundamental function would be equivalent to $\pot_r$ for $r\in\{1,p\}$ by Theorem~\ref{thm:LC}. If $r=p$, given $p<q<1$ the $q$-Banach envelope of $Z_{p,1}$, $Z_{1,p}$ or $D_{p,1}$ would be on one hand isomorphic to $\ell_q$ by Lemma~\ref{lem:Env}, and, on the other hand, isomorphic to $Z_{q,1}$, $Z_{1,q}$, or $D_{p,1}$, respectively. Since none of this spaces is isomorphic to $\ell_q$, we would reach a contradiction.
\end{enumerate}

\subsection{Fundamental functions of almost greedy bases of Besov spaces $B_{p,q}$, $0<p,q<\infty$.}\label{SecCase2}
Finally, we analyze the fundamental functions of almost greedy bases of Besov spaces $B_{p,q}$. The situation is as follows.
\begin{enumerate}[label=(\alph*),leftmargin=*]
\item Let $1<q<\infty$ and $0<p\le \infty$. By Lemma~\ref{lem:SDinSS}, the fundamental function of any squeeze-symmetric basis of $B_{p,q}$ grows as $\pot_q$. Conversely, by Proposition~\ref{prop:ExistenceAG} and \cite{AADK2019b}*{Examples 4.6(ii)}, $B_{p,q}$ has almost greedy bases whose fundamental functions grow as $\pot_q$.

\item Let $1\le p\le \infty$. By Lemma~\ref{lem:SDinSS}, the fundamental function of any squeeze-symmetric basis of $B_{p,1}$ grows as $\pot_1$. Conversely, by \cite{AADK2019b}*{Example 4.16}, $B_{p,1}$ has almost greedy bases whose fundamental function grows as $\pot_1$.

\item Let $0<p<\infty$. By Lemma~\ref{lem:SDinSS} and Lemma~\ref{lem:BFF}, $B_{p,0}$ has no squeeze-symmetric basis.

\item Let $0<q<1$, and $0<p\le \infty$. If $q<p$, we pick $p<r<\min\{1,p\}$ so that the $r$-Banach envelope of $B_{p,q}$ is not isomophic to $\ell_r$. Combining Lemma~\ref{lem:SDinSS} with Lemma~\ref{lem:Env}, gives that $B_{p,q}$ has no squeeze-symmetric basis. If $p<q$, $B_{p,q}$ has no squeeze-symmetric basis either by \cite{AABe2022}. Summing up, $B_{p,q}$ has no squeeze-symmetric basis unless $p=q$, so that $B_{p,q}=\ell_q$.

\item The spaces $B_{p,1}$ for $0<p<1$ do not possess greedy bases because they have a unique unconditional basis up to a permutation and the canonical basis of those spaces is (unconditional but) not democratic. However, we do not know whether $B_{p,1}$, has an almost greedy basis. If it were the case, its fundamental function would be equivalent to $\pot_1$.
\end{enumerate}
\begin{bibdiv}
\begin{biblist}

\bib{AlbiacAnsorena2016b}{article}{
author={Albiac, Fernando},
author={Ansorena, Jos\'{e}~L.},
title={The isomorphic classification of {B}esov spaces over {$\mathbb{R}^d$} revisited},
date={2016},
ISSN={2662-2033},
journal={Banach J. Math. Anal.},
volume={10},
number={1},
pages={108\ndash 119},
url={https://doi-org/10.1215/17358787-3336542},
review={\MR{3453526}},
}

\bib{AlbiacAnsorena2016}{article}{
author={Albiac, Fernando},
author={Ansorena, Jos\'{e}~L.},
title={Lorentz spaces and embeddings induced by almost greedy bases in {B}anach spaces},
date={2016},
ISSN={0176-4276},
journal={Constr. Approx.},
volume={43},
number={2},
pages={197\ndash 215},
url={https://doi-org/10.1007/s00365-015-9293-3},
review={\MR{3472645}},
}

\bib{AlbiacAnsorena2017}{article}{
author={Albiac, Fernando},
author={Ansorena, Jos\'{e}~L.},
title={Isomorphic classification of mixed sequence spaces and of {B}esov spaces over {$[0,1]^d$}},
date={2017},
ISSN={0025-584X},
journal={Math. Nachr.},
volume={290},
number={8-9},
pages={1177\ndash 1186},
url={https://doi.org/10.1002/mana.201600236},
review={\MR{3666992}},
}

\bib{AABe2022}{article}{
author={Albiac, Fernando},
author={Ansorena, Jos\'{e}~L.},
author={Bello, Glenier},
title={Democracy of quasi-greedy bases in $p$-{B}anach spaces with applications to the efficiency of the thresholding greedy algoritm in the  {H}ardy spaces ${H}_p(\mathbb{D}^d)$},
date={2022},
journal={arXiv e-prints},
eprint={2208.09342},
}

\bib{AAB2021}{article}{
author={Albiac, Fernando},
author={Ansorena, Jos\'{e}~L.},
author={Bern\'{a}, Pablo~M.},
title={New parameters and {L}ebesgue-type estimates in greedy approximation},
date={2021},
journal={arXiv e-prints},
eprint={2104.10912},
}

\bib{AABW2021}{article}{
author={Albiac, Fernando},
author={Ansorena, Jos\'{e}~L.},
author={Bern\'{a}, Pablo~M.},
author={Wojtaszczyk, Przemys{\l}aw},
title={Greedy approximation for biorthogonal systems in quasi-{B}anach spaces},
date={2021},
journal={Dissertationes Math. (Rozprawy Mat.)},
volume={560},
pages={1\ndash 88},
}

\bib{AACD2018}{article}{
author={Albiac, Fernando},
author={Ansorena, Jos\'{e}~L.},
author={C\'{u}th, Marek},
author={Doucha, Michal},
title={Lipschitz free {$p$}-spaces for {$0 < p < 1$}},
date={2020},
ISSN={0021-2172},
journal={Israel J. Math.},
volume={240},
number={1},
pages={65\ndash 98},
url={https://doi-org/10.1007/s11856-020-2061-5},
review={\MR{4193127}},
}

\bib{AADK2019b}{article}{
author={Albiac, Fernando},
author={Ansorena, Jos\'{e}~L.},
author={Dilworth, Stephen~J.},
author={Kutzarova, Denka},
title={Building highly conditional almost greedy and quasi-greedy bases in {B}anach spaces},
date={2019},
ISSN={0022-1236},
journal={J. Funct. Anal.},
volume={276},
number={6},
pages={1893\ndash 1924},
url={https://doi-org/10.1016/j.jfa.2018.08.015},
review={\MR{3912795}},
}

\bib{AAW2019}{article}{
author={Albiac, Fernando},
author={Ansorena, Jos\'{e}~L.},
author={Wojtaszczyk, Przemys{\l}aw},
title={Conditional quasi-greedy bases in non-superreflexive {B}anach spaces},
date={2019},
ISSN={0176-4276},
journal={Constr. Approx.},
volume={49},
number={1},
pages={103\ndash 122},
url={https://doi-org/10.1007/s00365-017-9399-x},
review={\MR{3895765}},
}

\bib{AAW2021b}{article}{
author={Albiac, Fernando},
author={Ansorena, Jos\'{e}~L.},
author={Wojtaszczyk, Przemys{\l}aw},
title={On certain subspaces of {$\ell_p$} for {$0<p\leq1$} and their applications to conditional quasi-greedy bases in {$p$}-{B}anach spaces},
date={2021},
ISSN={0025-5831},
journal={Math. Ann.},
volume={379},
number={1-2},
pages={465\ndash 502},
url={https://doi-org/10.1007/s00208-020-02069-3},
review={\MR{4211094}},
}

\bib{AAW2021}{article}{
author={Albiac, Fernando},
author={Ansorena, Jos\'{e}~L.},
author={Wojtaszczyk, Przemys{\l}aw},
title={Quasi-greedy bases in {$\ell_ p$} {$(0<p<1)$} are democratic},
date={2021},
ISSN={0022-1236},
journal={J. Funct. Anal.},
volume={280},
number={7},
pages={108871, 21},
url={https://doi-org/10.1016/j.jfa.2020.108871},
review={\MR{4211033}},
}

\bib{AlbiacKalton2016}{book}{
author={Albiac, Fernando},
author={Kalton, Nigel~J.},
title={Topics in {B}anach space theory},
edition={Second Edition},
series={Graduate Texts in Mathematics},
publisher={Springer, [Cham]},
date={2016},
volume={233},
ISBN={978-3-319-31555-3; 978-3-319-31557-7},
url={https://doi.org/10.1007/978-3-319-31557-7},
note={With a foreword by Gilles Godefroy},
review={\MR{3526021}},
}

\bib{Ansorena2022}{article}{
author={Ansorena, Jos\'{e}~L.},
title={Fundamental functions of almost greedy bases of {$L_p$} for {$1<p< \infty$}},
date={2022},
ISSN={2662-2033},
journal={Banach J. Math. Anal.},
volume={16},
number={3},
pages={Paper No. 41, 14},
url={https://doi-org/10.1007/s43037-022-00195-5},
review={\MR{4431297}},
}

\bib{DKK2003}{article}{
author={Dilworth, Stephen~J.},
author={Kalton, Nigel~J.},
author={Kutzarova, Denka},
title={On the existence of almost greedy bases in {B}anach spaces},
date={2003},
ISSN={0039-3223},
journal={Studia Math.},
volume={159},
number={1},
pages={67\ndash 101},
url={https://doi.org/10.4064/sm159-1-4},
note={Dedicated to Professor Aleksander Pe{\l}czy\'nski on the occasion
of his 70th birthday},
review={\MR{2030904}},
}

\bib{DKKT2003}{article}{
author={Dilworth, Stephen~J.},
author={Kalton, Nigel~J.},
author={Kutzarova, Denka},
author={Temlyakov, Vladimir~N.},
title={The thresholding greedy algorithm, greedy bases, and duality},
date={2003},
ISSN={0176-4276},
journal={Constr. Approx.},
volume={19},
number={4},
pages={575\ndash 597},
url={https://doi-org/10.1007/s00365-002-0525-y},
review={\MR{1998906}},
}

\bib{DSBT2012}{article}{
author={Dilworth, Stephen~J.},
author={Soto-Bajo, Mois\'es},
author={Temlyakov, Vladimir~N.},
title={Quasi-greedy bases and {L}ebesgue-type inequalities},
date={2012},
ISSN={0039-3223},
journal={Studia Math.},
volume={211},
number={1},
pages={41\ndash 69},
url={https://doi-org/10.4064/sm211-1-3},
review={\MR{2990558}},
}

\bib{EdWo1976}{article}{
author={\`Edel\cprime~\v{s}te\u{\i}n, I.~S.},
author={Wojtaszczyk, Przemys{\l}aw},
title={On projections and unconditional bases in direct sums of {B}anach spaces},
date={1976},
ISSN={0039-3223},
journal={Studia Math.},
volume={56},
number={3},
pages={263\ndash 276},
url={https://doi-org/10.4064/sm-56-3-263-276},
review={\MR{425585}},
}

\bib{GW2014}{article}{
author={Garrig\'os, Gustavo},
author={Wojtaszczyk, Przemys{\l}aw},
title={Conditional quasi-greedy bases in {H}ilbert and {B}anach spaces},
date={2014},
journal={Indiana Univ. Math. J.},
volume={63},
number={4},
pages={1017\ndash 1036},
}

\bib{Gogyan2010}{article}{
author={Gogyan, Smbat},
title={An example of an almost greedy basis in {$L^1(0,1)$}},
date={2010},
ISSN={0002-9939},
journal={Proc. Amer. Math. Soc.},
volume={138},
number={4},
pages={1425\ndash 1432},
url={https://doi-org/10.1090/S0002-9939-09-10169-7},
review={\MR{2578535}},
}

\bib{KoTe1999}{article}{
author={Konyagin, Sergei~V.},
author={Temlyakov, Vladimir~N.},
title={A remark on greedy approximation in {B}anach spaces},
date={1999},
ISSN={1310-6236},
journal={East J. Approx.},
volume={5},
number={3},
pages={365\ndash 379},
review={\MR{1716087}},
}

\bib{LinTza1977}{book}{
author={Lindenstrauss, Joram},
author={Tzafriri, Lior},
title={Classical {B}anach spaces. {I} -- sequence spaces},
series={Ergebnisse der Mathematik und ihrer Grenzgebiete [Results in Mathematics and Related Areas]},
publisher={Springer-Verlag, Berlin-New York},
date={1977},
ISBN={3-540-08072-4},
review={\MR{0500056}},
}

\bib{Nielsen2007}{article}{
author={Nielsen, Morten},
title={An example of an almost greedy uniformly bounded orthonormal basis for {$L_p(0,1)$}},
date={2007},
ISSN={0021-9045},
journal={J. Approx. Theory},
volume={149},
number={2},
pages={188\ndash 192},
url={https://doi-org/10.1016/j.jat.2007.04.011},
review={\MR{2374604}},
}

\bib{Peetre1974}{article}{
author={Peetre, J.},
title={Remark on the dual of an interpolation space},
date={1974},
ISSN={0025-5521},
journal={Math. Scand.},
volume={34},
pages={124\ndash 128},
url={https://doi.org/10.7146/math.scand.a-11512},
review={\MR{0372640}},
}

\bib{Pel1960}{article}{
author={Pe{\l}czy\'{n}ski, Aleksander},
title={Projections in certain {B}anach spaces},
date={1960},
ISSN={0039-3223},
journal={Studia Math.},
volume={19},
pages={209\ndash 228},
url={https://doi-org/10.4064/sm-19-2-209-228},
review={\MR{126145}},
}

\bib{PelSin1964}{article}{
author={Pe{\l}czy\'{n}ski, Aleksander},
author={Singer, Ivan},
title={On non-equivalent bases and conditional bases in {B}anach spaces},
date={1964/65},
ISSN={0039-3223},
journal={Studia Math.},
volume={25},
pages={5\ndash 25},
url={https://doi-org/10.4064/sm-25-1-5-25},
review={\MR{179583}},
}

\bib{Shapiro1977}{inproceedings}{
author={Shapiro, J.~H.},
title={Remarks on {$F$}-spaces of analytic functions},
date={1977},
booktitle={Banach spaces of analytic functions ({P}roc. {P}elczynski {C}onf., {K}ent {S}tate {U}niv., {K}ent, {O}hio, 1976)},
pages={107\ndash 124. Lecture Notes in Math., Vol. 604},
review={\MR{0487412}},
}

\bib{Woj2000}{article}{
author={Wojtaszczyk, Przemys{\l}aw},
title={Greedy algorithm for general biorthogonal systems},
date={2000},
ISSN={0021-9045},
journal={J. Approx. Theory},
volume={107},
number={2},
pages={293\ndash 314},
url={https://doi-org/10.1006/jath.2000.3512},
review={\MR{1806955}},
}

\end{biblist}
\end{bibdiv}
\end{document}